\newtheorem{theorem}{Theorem}[section]
\newtheorem{lemma}[theorem]{Lemma}
\newtheorem{prop}[theorem]{Proposition}
\newtheorem{corollary}[theorem]{Corollary}
\theoremstyle{definition}
\theoremstyle{remark}
\newtheorem{remark}[theorem]{Remark}
\numberwithin{equation}{section}
\newcommand{\lip}{\mathrm{Lip}}
\newcommand{\IND}{\mathbbm{1}}
\newcommand{\capa}{\mathrm{cap}}
\DeclareMathOperator{\supp}{\mathrm{supp}}
\newcommand{\var}{\mathrm{Var}}
\newcommand{\tr}{\mathrm{tr}}
\newcommand{\De}{\mathrm{d}}
\newcommand{\cA}{\ensuremath{\mathcal A}}
\newcommand{\cB}{\ensuremath{\mathcal B}}
\newcommand{\cC}{\ensuremath{\mathcal C}}
\newcommand{\cD}{\ensuremath{\mathcal D}}
\newcommand{\cE}{\ensuremath{\mathcal E}}
\newcommand{\cF}{\ensuremath{\mathcal F}}
\newcommand{\cG}{\ensuremath{\mathcal G}}
\newcommand{\cH}{\ensuremath{\mathcal H}}
\newcommand{\cK}{\ensuremath{\mathcal K}}
\newcommand{\cR}{\ensuremath{\mathcal R}}
\newcommand{\cS}{\ensuremath{\mathcal S}}
\newcommand{\cU}{\ensuremath{\mathcal U}}
\newcommand{\bbE}{\ensuremath{\mathbb E}}
\newcommand{\bbL}{\ensuremath{\mathbb L}}
\newcommand{\bbN}{\ensuremath{\mathbb N}}
\newcommand{\bbP}{\ensuremath{\mathbb P}}
\newcommand{\bbQ}{\ensuremath{\mathbb Q}}
\newcommand{\bbR}{\ensuremath{\mathbb R}}
\newcommand{\bbX}{\ensuremath{\mathbb X}}
\newcommand{\bbY}{\ensuremath{\mathbb Y}}
\newcommand{\bbZ}{\ensuremath{\mathbb Z}}
\begin{document}

\title[Entropic repulsion by disconnection]{Entropic repulsion for the Gaussian free field conditioned on disconnection by level-sets}


\author{Alberto Chiarini}
\address{Mathematics Department, UCLA}
\curraddr{520, Portola Plaza, 90095 Los Angeles, USA}
\email{chiarini@math.ucla.edu}
\thanks{}

\author{Maximilian Nitzschner}
\address{Departement Mathematik, ETH Z\"urich}
\curraddr{101, R\"amistrasse, CH-8092 Z\"urich, Switzerland}
\email{maximilian.nitzschner@math.ethz.ch}
\thanks{}

\begin{abstract} We investigate level-set percolation of the discrete Gaussian free field on $\bbZ^d$, $d\geq 3$, in the strongly percolative regime. 
We consider the event that the level-set of the Gaussian free field below a level $\alpha$ disconnects the discrete blow-up of a compact set $A\subseteq \bbR^d$ from the boundary of an enclosing box. We derive asymptotic large deviation upper bounds on the probability that the local averages of the Gaussian free field deviate from a specific multiple of the harmonic potential of $A$, when disconnection occurs. 
These bounds, combined with the findings of the recent article~\cite{duminil2019equality}, show that conditionally on disconnection, the Gaussian free field experiences an entropic push-down proportional to the harmonic potential of $A$. 
In particular, due to the slow decay of correlations, the disconnection event affects the field on the whole lattice. 
Furthermore, we provide a certain `profile' description for the field in the presence of disconnection. We show that while on a macroscopic scale the field is pinned around a level proportional to the harmonic potential of $A$, it locally retains the structure of a Gaussian free field shifted by a constant value. 
Our proofs rely crucially on the `solidification estimates' developed in~\cite{nitzschner2017solidification} by A.-S.\ Sznitman and the second author.
\end{abstract}

\subjclass[2010]{}
\keywords{}
\date{November 25, 2019}
\dedicatory{}
\maketitle

\tableofcontents

\section{Introduction}
In this article we investigate level-set percolation for the discrete Gaussian free field on $\bbZ^d$, $d \geq 3$. The study of percolation phenomena for level-sets of the Gaussian free field was initiated in the eighties (see~\cite{bricmont1987percolation, lebowitz1986percolation, molchanov1983percolation}) and has attracted considerable interest in recent years (see~\cite{drewitz2017sign, nitzschner2018entropic, rodriguez2013phase,sznitman2015disconnection,sznitman2018macroscopic}), especially due to the presence of its slowly decaying spatial correlations. 

We aim at understanding the effects of conditioning the Gaussian free field to a certain \emph{rare event} arising from level-set percolation. More specifically, we consider the event that the level-set below $\alpha\in\bbR$ of the Gaussian free field \emph{disconnects} the discrete blow-up of a compact set $A\subseteq \bbR^d$ from the boundary of an enclosing box. The level $\alpha$ is chosen such that the level-set above $\alpha$ of the Gaussian free field is in a strongly percolative regime and thus the disconnection event becomes atypical.
The results we obtain share a similar spirit to (classical) entropic repulsion phenomena that  were for instance studied in~\cite{bolthausen2001entropic, bolthausen1995entropic, deuschel1999pathwise}. 
Roughly speaking, it is known that conditioning the Gaussian free field to be positive over the discrete blow-up of a compact set (with certain regularity assumptions) leads to an upward shift in its average. In our case, using a recently established equality of certain critical parameters (see~\cite{duminil2019equality}), it will turn out that conditioning on disconnection entails a pinning of the average of the Gaussian free field locally to $-(h_*-\alpha)h_A$, where $h_A$ is the harmonic potential of the set $A$ and $h_*$ is the threshold for level-set percolation of the Gaussian free field, if the set $A$ is sufficiently regular. This conclusion complements and refines the findings of~\cite{nitzschner2018entropic}.
The study of the Gaussian free field conditioned on certain events is in general a difficult problem, since the conditional measures are usually non-Gaussian. In the case of classical entropic repulsion, one can use Brascamp-Lieb inequalities to overcome this issue, see e.g.~\cite{deuschel1999pathwise}, but an extension of these methods to our context is not obvious. \vspace{\baselineskip}

We will now describe the model and our results in a more detailed way. Consider $\bbZ^d$, $d \geq 3$ and let $\bbP$ be the law on $\bbR^{\bbZ^d}$ so that
\begin{equation}
\begin{minipage}{0.8\linewidth}
\label{eq:GFFDef}
  under $\bbP$, the canonical field $(\varphi_x)_{x \in \bbZ^d}$ is a centered Gaussian field with covariance $\bbE[\varphi_x\varphi_y] = g(x,y)$ for all $x,y \in \bbZ^d$,
\end{minipage}
\end{equation}
where $g(\cdot,\cdot)$ denotes the Green function of the simple random walk on $\bbZ^d$, see~\eqref{eq:GreenFunction}. For $\alpha \in \bbR$, one defines the \textit{level-set} above $\alpha$ by
\begin{equation}
E^{\geq \alpha} = \lbrace x \in \bbZ^d; \varphi_x \geq \alpha \rbrace.
\end{equation}
There are three critical levels $0 < \overline{h} \leq h_\ast \leq h_{\ast\ast} < \infty$ relevant to the study of the percolation of $E^{\geq \alpha}$. The strongly non-percolative regime for $E^{\geq \alpha}$ corresponds to $\alpha > h_{\ast\ast}$, the strongly percolative regime corresponds to $\alpha < \overline{h}$, where $h_{\ast\ast}$ and $\overline{h}$ are defined in equation (0.6) of~\cite{rodriguez2013phase} and equation (5.3) of~\cite{sznitman2015disconnection} respectively, and $h_\ast$ denotes the threshold for percolation of $E^{\geq \alpha}$. It has been proven in~\cite{drewitz2018phase} that $\overline{h}>0$ for all dimensions $d \geq 3$, extending a previous result from~\cite{drewitz2017sign} which showed that $h_\ast > 0$ for all $d \geq 3$. More recently, it has been established in~\cite{duminil2019equality} that 
\begin{equation}
\label{eq:Equality_crit_par}
\overline{h} = h_\ast = h_{\ast\ast},
\end{equation}
using methods similar as in~\cite{duminil2017sharp}. In the remainder of this text, we will often deliberately formulate results in terms of $\overline{h}$ or $h_{\ast\ast}$ instead of $h_\ast$ to emphasize the effect of the respective strongly (non-)percolative nature of the regimes in question, and also to keep a consistency with earlier works.

Consider now a compact set $A \subseteq \bbR^d$ with non-empty interior contained in the interior of a box of side-length $2M$, $M > 0$, centered at the origin. The discrete blow-up of $A$ and the boundary of the discrete blow-up of its enclosing box are defined as
\begin{equation}
\label{eq:BlowUp_Boundary}
A_N = (NA) \cap \bbZ^d, \qquad S_N = \lbrace x \in \bbZ^d; |x|_\infty = \lfloor MN\rfloor \rbrace,
\end{equation}
respectively, with $|\cdot|_\infty$ denoting the sup-norm of a vector and $\lfloor\cdot \rfloor$ the integer part of a real number. One main object of interest will be the \textit{disconnection event}
\begin{equation}
 \label{eq:DisconnectionEvent}
 \cD^\alpha_N = \Big\lbrace A_N \stackrel{\geq \alpha}{\centernot\longleftrightarrow} S_N \Big\rbrace,
 \end{equation}
which corresponds to the absence of a nearest-neighbor path in $E^{\geq \alpha}$ connecting the sets $A_N$ and $S_N$.  Theorems 2.1 and 3.1 of~\cite{nitzschner2018entropic} provide large deviation lower and upper bounds for $\bbP[\mathcal{D}^\alpha_N]$ in terms of the Brownian capacity of $A$ ($\mathring{A}$) (see e.g.\ p.57-58 of~\cite{port2012brownian} for a definition), namely for $\alpha < h_{\ast\ast} ( = h_\ast)$ it holds that
\begin{equation}
\label{eq:DiscLowerBound}
\liminf_{N \rightarrow \infty} \frac{1}{N^{d-2}} \log \bbP[\mathcal{D}^\alpha_N] \geq - \frac{1}{2d}(h_{\ast\ast} - \alpha)^2 \capa(A),
\end{equation}
whereas for $\alpha < \overline{h} ( = h_\ast)$, it holds that
\begin{equation}
\limsup_{N \rightarrow \infty} \frac{1}{N^{d-2}} \log \bbP[\mathcal{D}^\alpha_N] \leq - \frac{1}{2d}(\overline{h} - \alpha)^2 \capa(\mathring{A}).
\end{equation}
Remarkably, by the equality of the critical parameters~\eqref{eq:Equality_crit_par} a combination of these results provides for any \emph{regular} compact set $A$ (in the sense that $\capa(A) = \capa(\mathring{A})$) the exact asymptotic behavior for $\bbP[\mathcal{D}^\alpha_N]$.
 What underlies the above results is the following effect:  In the percolative regime $\alpha<h_*$, the most `efficient' way in which the field can achieve a situation where its level-set below level $\alpha$ disconnects $A_N$ from $S_N$ is to have a down-shift of size $-(h_* - \alpha)h_A(x/N)$ at each site $x\in \bbZ^d$, where $h_A$ is the harmonic potential of $A$  (see~\eqref{eq:HarmonicPot}), illustrated in Figure~\ref{fig:shiftdown} below. The main results of this article, Theorems \ref{thm:MainUpperBound} and \ref{thm:ProfileDescription}, aim at quantifying this claim.
 \begin{figure}[h]\label{fig:shiftdown}
  \centering
  \includegraphics[width=0.8\textwidth]{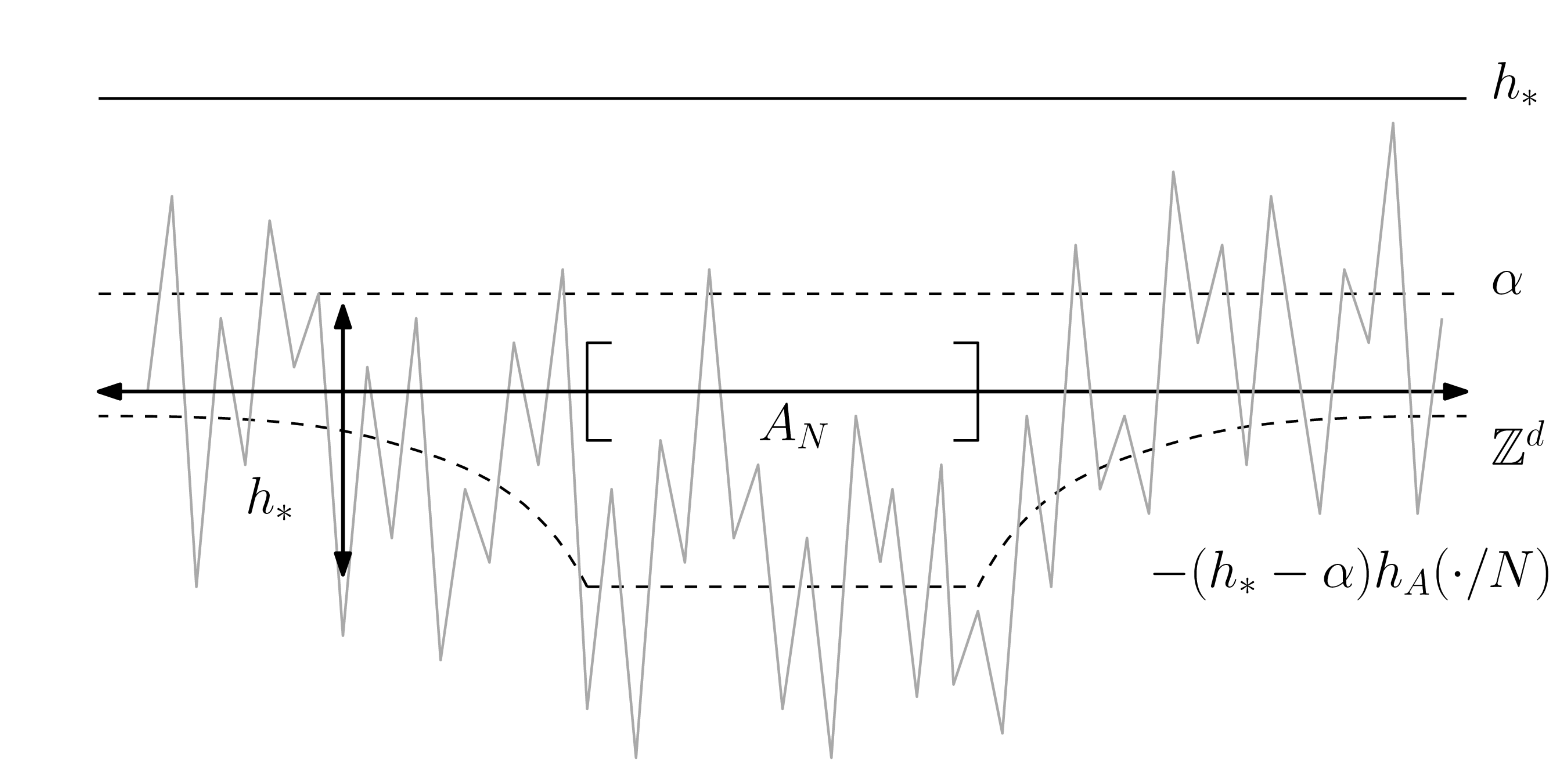} 
  \caption{Field conditioned on disconnection (informal picture). }
 \end{figure}

To investigate the influence of disconnection on the Gaussian free field, we introduce the random measure on $\bbR^d$, 
\begin{equation}\label{eq:measureGFF}
\bbX_N = \frac{1}{N^d} \sum_{x \in \bbZ^d} \varphi_x \delta_{\frac{x}{N}},
\end{equation}
and we define for any continuous, compactly supported function $\eta : \bbR^d \rightarrow \bbR$, and any signed Radon measure $\nu$ on $\mathbb{R}^d$ 
\begin{equation}
\label{eq:NotationIntegral}
\langle \nu,\eta \rangle = \int \eta(x) \nu(\De x).
\end{equation}

If $\nu(\De x) = f(x)\De x$, we write for simplicity $\langle f, \eta\rangle$ instead of $\langle \nu, \eta \rangle$. Our main result comes in Section 4 with Theorem \ref{thm:MainUpperBound}. We show that for $\alpha < \overline{h} ( = h_\ast)$, $\Delta > 0$ and $\lip_1(J)$, the class of Lipschitz functions supported in a compact subset $J$ of $\bbR^d$ with the sum of their sup-norm and Lipschitz constant bounded by one (see \eqref{eq:DefLip_1}), one has the asymptotic upper bound 
\begin{equation}
\begin{split}
\label{eq:Localization}
\limsup_{N \rightarrow \infty} \frac{1}{N^{d-2}} \log \, & \bbP\bigg[ \sup_{\eta \in \lip_1(J)} \left\vert \langle \bbX_N,\eta \rangle -  \langle \cH^\alpha_{\mathring{A}},\eta \rangle \right\vert \geq \Delta ; \cD^\alpha_N \bigg]  \\
& \leq -\frac{1}{2d}(\overline{h}-\alpha)^2 \capa(\mathring{A})
- c_1(\Delta,\alpha,J),
\end{split}
\end{equation}
where we defined $\cH^\alpha_{\mathring{A}} = -(\overline{h}-\alpha)h_{\mathring{A}}$, and $ c_1(\Delta,\alpha, J)$ is a positive constant depending on $\Delta$, $\alpha$ and $J$ as well as on $A$, $M$ and $d$. Let us point out that $\{ \sup_{\eta \in \lip_1(J)} |\langle \bbX_N, \eta \rangle - \langle \cH^\alpha_{\mathring{A}},\eta\rangle| \geq \Delta \}$ is measurable as $\lip_1(J)$ is separable with respect to $\| \cdot \|_\infty$, see below \eqref{eq:Theorem4_1_claim} for details. Since the critical parameters $\overline{h}, h_\ast$ and $h_{\ast\ast}$ coincide and if $A$ is regular in the sense that $\capa(A) = \capa(\mathring{A})$, one obtains by combining this bound with~\eqref{eq:DiscLowerBound} that for any $\alpha < h_\ast$, 
\begin{equation}
\label{eq:LocalizationCond}
\lim_{N \rightarrow \infty} \bbE\bigg[ \sup_{\eta \in \lip_1(J)}  \big| \langle \bbX_N,\eta \rangle - \langle \cH^\alpha_A,\eta \rangle \big|\wedge 1\Big\vert \cD^\alpha_N \bigg] = 0,
\end{equation}
see Corollary \ref{thm:Corollary42}. Thus, conditionally on the disconnection event $\cD^\alpha_N$, local macroscopic averages of the Gaussian free field are indeed pinned with high probability to $\cH^\alpha_A$.  
\vspace{\baselineskip} 

The exponential rate in~\eqref{eq:Localization} appears in a rather non-explicit way. It turns out that, if we are only interested in showing the entropic push-down of the field, we can obtain a more concrete (although not sharp) bound for the rate. 
In fact, we will show in Theorem \ref{thm:pushdown} of Section 3 that for $\alpha < \overline{h} ( = h_\ast)$, $\Delta > 0$ and a continuous, non-negative and compactly supported function $\eta : \bbR^d \rightarrow \bbR$, one has the asymptotic upper bound
\begin{equation}
\label{eq:PushDown}
\begin{aligned}
  \limsup_{N \rightarrow \infty} \frac{1}{N^{d-2}} \log\, & \bbP\Big[\langle\bbX_N,\eta \rangle \geq \langle \cH^\alpha_{\mathring{A}},\eta \rangle + \Delta ; \cD^\alpha_N \Big] \\ & \leq -\frac{1}{2d}(\overline{h}-\alpha)^2 \capa(\mathring{A})  - \frac{\Delta^2}{2d} \frac{1}{E(\eta)},
\end{aligned}
\end{equation}
where $E(\eta) = \int \eta(x) g_{BM} (x,y) \eta(y) \De x \De y$ is the energy associated to the function $\eta$, with $g_{BM} (x,y)$ being the Green function of the standard Brownian motion on $\bbR^d$. This result substantially strengthens Theorem 4.3 of~\cite{nitzschner2018entropic}, where $\eta$ was assumed to be the indicator function of a non-empty open subset with closure contained in $\mathring{A}$. 
In contrast to~\eqref{eq:Localization}, we get an explicit rate in~\eqref{eq:PushDown} because we can rely on a pointwise solidification upper bound (see Lemma~\ref{prop:harmonic_bound} below) which does not have a corresponding lower bound. Thus, in the derivation of~\eqref{eq:Localization}, which has the additional difficulty of being uniform in $\eta\in \lip_1(J)$, we need to replace these pointwise estimates with a weaker energy bound.

As a corollary to~\eqref{eq:PushDown}, if $\capa(A) = \capa(\mathring{A})$ and using~\eqref{eq:Equality_crit_par} one obtains that
\begin{equation}
    \limsup_{N \rightarrow \infty} \frac{1}{N^{d-2}} \log \bbP\Big[\langle\bbX_N,\eta \rangle \geq \langle \cH^\alpha_A,\eta \rangle + \Delta | \cD^\alpha_N \Big]  \leq - \frac{\Delta^2}{2d} \frac{1}{E(\eta)}.
  \end{equation}
This bound should be compared to the case without conditioning, where a direct computation gives
\begin{equation}
  \lim_{N \rightarrow \infty} \frac{1}{N^{d-2}} \log \bbP\Big[\langle \bbX_N, \eta\rangle \geq  \Delta  \Big]  = - \frac{\Delta^2}{2 d} \frac{1}{ E(\eta)}.
\end{equation}
In words: conditioned on disconnection, for the field to lie above its average is at least as costly as in the case without conditioning.
\vspace{\baselineskip}

To gain a deeper understanding of the local behavior of $\bbP[\ \cdot \ |\cD^\alpha_N]$ for large $N$, we introduce a certain `profile' description in the spirit of~\cite{bolthausen1993critical} for the Gaussian free field conditioned on disconnection. In essence, such a description enables us to track the behavior of the Gaussian free field simultaneously on a `global' scale as well as on a `local' scale. Roughly speaking, while on a global scale the local average of the Gaussian free field is pinned at $\cH^\alpha_A$ under $\bbP[\ \cdot \ |\cD^\alpha_N]$, it locally looks like a Gaussian free field shifted by a constant value. To rigorously capture this phenomenon, we define the random measure on $\bbR^d\times \bbR^{\bbZ^d}$
\begin{equation}
\bbY_N = \frac{1}{N^d} \sum_{x \in \bbZ^d} \delta_{\frac{x}{N}} \otimes \delta_{\tau_x \varphi},
\end{equation}
where $(\tau_x \varphi)_y = \varphi_{x+y}$ for all $x,y\in \bbZ^d$. We show in Theorem \ref{thm:ProfileDescription} of Section 5, that for any $\Delta > 0$, $\alpha < \overline{h} ( = h_\ast)$, and functions $\eta : \bbR^d \to \bbR$ and $F : \bbR^{\bbZ^d} \to \bbR$  with certain regularity properties (see above \eqref{eq:StatementProfileThm51}), there exists a positive constant $c_2(\Delta, \alpha, \eta, F)$ (depending besides $\Delta, \alpha, \eta, F$ also on $A$, $M$ and $d$) such that
\begin{equation}
\label{eq:ProfileDescription}
\begin{split}
\limsup_{N \rightarrow \infty} \frac{1}{N^{d-2}} \log \bbP &\Big[\big|\langle \bbY_N -  \De x \otimes \bbP^{\cH^\alpha_{\mathring{A}}(x) \mathbbm{1} } , \eta \otimes F\rangle\big| \geq \Delta; \cD^\alpha_N \Big] \\
& \leq - \frac{1}{2d}(\overline{h}-\alpha)^2\capa(\mathring{A}) - c_2(\Delta,\alpha,\eta,F),
\end{split}
\end{equation}
where $\bbP^{\cH^\alpha_{\mathring{A}}(x) \mathbbm{1} }$ is the probability measure on $\bbR^{\bbZ^d}$ such that 
\begin{equation}
\text{$\varphi$ under $\bbP^{\cH^\alpha_{\mathring{A}}(x) \mathbbm{1}}$ has the same
law as $\varphi + \cH^\alpha_{\mathring{A}}(x) \mathbbm{1}$ under $\bbP$,
} 
\end{equation}
and  $\mathbbm{1}(y) = 1$ for all $y\in \bbZ^d$.
Again, by making use of~\eqref{eq:Equality_crit_par}, if we assume $A$ to be regular, for any $\alpha<h_*$ it holds that
\begin{equation}
\lim_{N \rightarrow \infty} \bbE\Big[ \big|\langle \bbY_N -  \De x \otimes \bbP^{\cH^\alpha_{\mathring{A}}(x) \mathbbm{1} },  \eta \otimes F \rangle\big|\wedge 1 \big| \cD^\alpha_N \Big] = 0,
\end{equation}
see Corollary \ref{thm:Corollary52}.
\vspace{\baselineskip}

The organization of this article is as follows: In Section 2 we introduce further notation and recall some useful results concerning random walks, the Gaussian free field and the solidification estimates from~\cite{nitzschner2017solidification}. In Section 3, we state and prove Theorem \ref{thm:pushdown}, which corresponds to the entropic push-down result~\eqref{eq:PushDown}. In Section 4, we proceed to our main result, Theorem \ref{thm:MainUpperBound}, corresponding to the pinning result~\eqref{eq:Localization}. Finally, in Section 5, we investigate the profile of the field conditioned on disconnection and prove in Theorem \ref{thm:ProfileDescription} the claim \eqref{eq:ProfileDescription}. In the Appendix, we provide asymptotic comparisons between the hitting probability of arbitrary finite unions of large discrete boxes by the simple random walk and the hitting probability of the solid filling of these boxes by Brownian motion.
\vspace{\baselineskip}

Finally, we give the convention we use concerning constants. By $c,c',\ldots$ we denote generic positive constants changing from place to place, that depend only on the dimension $d$ as well as on the compact set $A$ and the parameter $M > 0$ (see above \eqref{eq:BlowUp_Boundary}), which will be fixed quantities in Sections 3, 4 and 5. Numbered constants $c_1,c_2,\ldots$ will refer to the value assigned to them when they first appear in the text and dependence on additional parameters is indicated in the notation.
\section{Notation and useful results}
In this section we introduce some notation and collect useful results concerning random walks, potential theory, the discrete Gaussian free field and the solidification estimates for porous interfaces from~\cite{nitzschner2017solidification}. These solidification estimates, together with a related capacity lower bound will be instrumental in the following sections to derive the large deviation upper bounds~\eqref{eq:Localization},~\eqref{eq:PushDown} and~\eqref{eq:ProfileDescription}. We will assume that $d \geq 3$ throughout the article.
\vspace{\baselineskip}

We start by introducing some notation. For real numbers $s,t$, we denote by $s \vee t$ and $s \wedge t$ the maximum and minimum of $s$ and $t$, respectively, and we denote the integer part of $s$ by $\lfloor s \rfloor$. We consider on $\mathbb{R}^d$ the Euclidean and $\ell^\infty$-norms $|\cdot|$ and $|\cdot|_\infty$ and  the corresponding closed balls $B_2(x,r)$ and $B_\infty(x,r)$ of radius $r\geq 0$ and center $x \in \bbR^d$. Also,  we denote by $B(x,r) = \lbrace y \in \mathbb{Z}^d; |x-y|_\infty \leq r \rbrace \subseteq \bbZ^d$ the closed $\ell^\infty$-ball of radius $r\geq 0$ and center $x\in \bbZ^d$.  
For subsets $G,H \subseteq \mathbb{R}^d$, we denote by $d(G,H)$ their mutual $\ell^\infty$-distance, i.e.\ $d(G,H) = \inf \lbrace |x-y|_\infty ; x \in G , y \in H \rbrace$ and write for simplicity $d(x,G)$ instead of $d(\lbrace x \rbrace, G)$ for $x \in \mathbb{R}^d$. We also define the diameter of $G$ as $\mathrm{diam}(G) = \sup_{x,y \in G}|x-y|$. For $\delta > 0$, we denote by $G^\delta = \{ x \in \bbR^d; \inf_{y \in G}|x-y| \leq \delta\}$ the closed $\delta$-neighborhood of $G$. For $K \subseteq \mathbb{Z}^d$, we let $|K|$ denote the cardinality of $K$.
If $x,y \in \mathbb{Z}^d$ fulfill $|x - y| = 1$, we call them neighbors and write $x \sim y$. We call $\pi : \lbrace 0,\ldots, N \rbrace \rightarrow \mathbb{Z}^d$ a nearest neighbor path (of length $N \geq 1$) if $\pi(i) \sim \pi(i+1)$ for all $0 \leq i \leq N-1$. Given two measurable functions $f,g$ on $\bbR^d$ such that $|fg|$ is Lebesgue-integrable we define $\langle f,g\rangle = \int f(y)g(y)dy$. For functions $f:\bbR^d \rightarrow \bbR$ and $h: \bbZ^d \rightarrow \bbR$, we denote by $\|f\|_\infty$ and $\|h \|_\infty$ the respective supremum norms over $\bbR^d$ and $\bbZ^d$, and we denote by $f^+ = f \vee 0$ and $f^- = (-f) \vee 0$ the positive and negative part of $f$, respectively. If $f:\bbR^d\to \bbR$ is continuous and compactly supported and $\nu$ is a Radon measure on $\bbR^d$ we write $\langle \nu, f \rangle = \int f\De \nu$.
\vspace{\baselineskip}

Let us now introduce the discrete time simple random walk on $\mathbb{Z}^d$. We denote by $(X_n)_{n \geq 0}$ the canonical process on $(\mathbb{Z}^d)^{\mathbb{N}}$ and by $P_x$ the canonical law of a simple random walk on $\mathbb{Z}^d$ started at $x \in \mathbb{Z}^d$. For a subset $K \subseteq \mathbb{Z}^d$, we introduce stopping times (with respect to the canonical filtration generated by $(X_n)_{n \geq 0})$ $H_K = \inf \lbrace n \geq 0; X_n \in K \rbrace$, $\widetilde{H}_K = \inf \lbrace n \geq 1; X_n \in K \rbrace$, and $T_K = \inf \lbrace n \geq 0; X_n \notin K \rbrace$, the entrance, hitting and exit times of $K$. The Green function of the random walk $g(\cdot,\cdot)$ is then defined by
\begin{equation}
\label{eq:GreenFunction}
g(x,y) = \sum_{n \geq 0} P_x[X_n = y], \text{ for } x,y \in \mathbb{Z}^d,
\end{equation}
and since $d \geq 3$, it is finite. Moreover, one has $g(x,y) = g(x-y,0) \stackrel{\text{def}}{=} g(x-y)$ and the following asymptotic behavior (see e.g. Theorem 5.4, p.31 of~\cite{lawler2013intersections}):
\begin{equation}
\label{eq:AsymptoticBehaviourGreen}
g(x) \sim \frac{C_d}{|x|^{d-2}}, \qquad \text{as } |x| \rightarrow \infty, \text{ with }C_d = \frac{d}{2\pi^{\frac{d}{2}}}\Gamma\left(\frac{d}{2}-1 \right).
\end{equation}
 The equilibrium measure of a finite subset $K \subseteq \mathbb{Z}^d$ is defined by
\begin{equation}
\label{eq:EqMeasure}
e_K(x) = P_x[\widetilde{H}_K = \infty] \mathbbm{1}_K(x), \text{ for } x \in \mathbb{Z}^d,
\end{equation}
and its total mass
\begin{equation}
\text{cap}_{\mathbb{Z}^d}(K) = \sum_{x \in K}e_K(x)
\end{equation}
is called the (discrete) capacity of $K$. Recall that for finite $K \subseteq \mathbb{Z}^d$, one has
\begin{equation}
\label{eq:EquilibriumPotential}
P_x[H_K < \infty] = \sum_{x' \in K} g(x,x') e_K(x'), \text{ for }x \in \mathbb{Z}^d,
\end{equation} 
see e.g.\ Theorem 25.1, p.300 of \cite{spitzer2013principles}. For a box of size $L$, which we denote by $B_L = B(0,L)$, it can be shown that (see e.g.\ (2.16), p.53 of~\cite{lawler2013intersections})
\begin{equation}
\label{eq:BoxCapBound}
cL^{d-2} \leq \text{cap}_{\mathbb{Z}^d}(B_L) \leq c' L^{d-2}, \text{ for } L \geq 1.
\end{equation}
We will now discuss the Gaussian free field on $\mathbb{Z}^d$, $d \geq 3$. We recall the definitions of $(\varphi_x)_{x \in \mathbb{Z}^d}$ and $\bbP$ from~\eqref{eq:GFFDef}. For $U \subseteq \mathbb{Z}^d$, one can define the harmonic average $h^U$ of $\varphi$ in $U$ and the local field $\psi^U$, via 
\begin{align}
\label{eq:HarmonicAverage}
h^U_x = E_x[\varphi_{X_{T_U}}, T_U < \infty] &= \sum_{y \in \mathbb{Z}^d} P_x[X_{T_U} = y, T_U < \infty]\varphi_y, \text{ for } x \in \mathbb{Z}^d,
\\
\label{eq:LocalField}
\psi^U_x &= \varphi_x - h^U_x, \text{ for } x\in \mathbb{Z}^d.
\end{align}
Obviously, $\varphi_x = h^U_x + \psi^U_x$ and $\psi_x^U = 0$ if $x \in \mathbb{Z}^d \setminus U$, whereas $h^U_x = \varphi_x$ in that case. Furthermore, one has the `domain Markov property' of the Gaussian free field, which asserts that
\begin{equation}
\begin{minipage}{0.9\linewidth}
  $(\psi^U_x)_{x \in \mathbb{Z}^d}$ is independent of $\sigma(\varphi_y; y \in U^c)$ (in particular of  $(h^U_x)_{x \in \mathbb{Z}^d}$),
  and is distributed as a centered Gaussian field with covariance $g_U(\cdot,\cdot)$,
\end{minipage}
\end{equation}
where, $g_U(\cdot,\cdot)$ is the Green function of the random walk killed upon exiting $U$ (see (1.3) of~\cite{sznitman2015disconnection}). 

We will also need in Sections 4 and 5 a general second moment estimate which states that for any centered Gaussian vector $(Y_1,\ldots,Y_m)$ with values in $\bbR^m$ (governed by some probability $\bbQ$) and covariance matrix $G \in \bbR^{m \times m}$, one has for $t \geq 0$ (see Lemma A.1, p.1913 of \cite{bolthausen1993critical}), 
\begin{equation}
\label{eq:GaussianEstimate}
\bbQ\bigg[\sum_{i = 1}^m Y_i^2 \geq \tr(G) + t \bigg] \leq \exp \bigg\{-\frac{1}{8} \min\bigg( \frac{t}{\overline{G}}, \frac{t^2}{\tr(G^2)} \bigg) \bigg\},
\end{equation}
where $\overline{G} = \max_{1 \leq i \leq m} \sum_{j = 1}^m |G_{ij}|$ and $\tr(\cdot)$ denotes the trace of a matrix. This inequality will be applied for the Gaussian free field in the proof of Theorem~\ref{thm:MainUpperBound}. One can see that for $G(x,y) = \bbE[\varphi_x\varphi_y]$, $x,y \in J_N = (NJ) \cap \bbZ^d$ (with $J$ a non-empty, compact set), one has the asymptotics 
\begin{equation}
\label{eq:BoundsCovMatrixGFF}
\begin{split}
\tr(G) &\sim c(J) N^d, \\
\tr(G^2) &= O(N^{d+1}) \text{ as }N \rightarrow \infty, \\ 
\overline{G} & = O(N^2) \text{ as }N \rightarrow \infty,
\end{split}
\end{equation}
see e.g.~\cite{bolthausen1993critical}, p. 1899.
\vspace{\baselineskip}

We now introduce Brownian motion on $\bbR^d$ and present some aspects of its potential theory, in a similar fashion as it was done for the simple random walk above. Let $(Z_t)_{t \geq 0}$ be the canonical process on $C(\bbR_+, \bbR^d)$ and denote by $W_z$ the Wiener measure starting from $z \in \bbR^d$ such that under $W_z$, $(Z_t)_{t \geq 0}$ is a Brownian motion starting from $z \in \bbR^d$. For any open or closed set $B\subseteq \bbR^d$, we introduce stopping times (with respect to the canonical filtration generated by $(Z_t)_{t \geq 0}$) $H_B = \inf \lbrace s \geq 0; Z_s \in B \rbrace$ and $\widetilde{H}_B = \inf \lbrace s > 0; Z_s \in B \rbrace$, the entrance and hitting times of $B$ for Brownian motion, and $T_B = \inf \lbrace s \geq 0; Z_s \notin B \rbrace ( = H_{B^c})$, the exit time of Brownian motion from $B$. For later use we also define the first time when $Z$ moves at $|\cdot|_\infty$-distance $ r\geq 0$ from its starting point, 
\begin{equation}
\tau_r = \inf \lbrace s \geq 0; |Z_s - Z_0|_\infty \geq r \rbrace.
\end{equation}
For an open or closed set $B \subseteq \bbR^d$, one introduces the harmonic potential of $B$,
\begin{equation}\label{eq:HarmonicPot}
h_B(z) = W_z[\widetilde{H}_B < \infty], \qquad z \in \bbR^d.
\end{equation}
If $B$ is open and bounded and $(B_n)_{n \geq 1}$ is a sequence of compact sets such that $B_n \uparrow B$, then (see Proposition 1.13, p.60 of~\cite{port2012brownian})
\begin{equation}
\label{eq:hAconvergence}
h_{B_n} \uparrow h_B \qquad \text{on } \bbR^d.
\end{equation}

For $f,g  \in H^1(\bbR^d)$, the usual Sobolev space of square-integrable functions on $\bbR^d$ with square-integrable weak derivatives, one defines the Dirichlet form attached to Brownian motion
\begin{equation}
\cE(f,f) = \frac{1}{2} \int \left|\nabla f(x)\right|^2\De x,
\end{equation}
and by polarization one defines furthermore
\begin{equation}
\cE(f,g) = \frac{1}{4}\left(\cE(f+g,f+g) - \cE(f-g,f-g) \right).
\end{equation}
Note that $\cE(\cdot,\cdot)$ defined in this way is bilinear and its definition can be extended to the space of all weakly differentiable functions with finite Dirichlet energy.  
Combining Theorem 4.3.3, p.\ 171 of~\cite{fukushima2010dirichlet} with Theorem 2.1.5, p.\ 72 of the same reference, one knows that for any bounded and either open or closed set $B \subseteq \bbR^d$, $h_B$ is in this extended Dirichlet space of $(\cE,H^1(\bbR^d))$ (see Example 1.5.3 in \cite{fukushima2010dirichlet} for a characterization of this space) and it holds that
\begin{equation}
\label{eq:EqualityCapacityDirichlet}
\capa(B) = \cE(h_B, h_B).
\end{equation}
We also note here, that if $f,g$ are in the extended Dirichlet space of $(\cE,H^1(\bbR^d))$ and $f$ has compact support, one has
\begin{equation}
\label{eq:CSEnergies}
|\langle f,g \rangle|^2 \leq E(f) \cE(g,g),
\end{equation}
see below \eqref{eq:PushDown} for the definition of the energy $E(f)$. To see this inequality, one can for instance show it first in the case where $f,g$ are smooth and compactly supported, and then use an approximation argument (compare also with Lemma 1.5.3, p.\ 39 of~\cite{fukushima2010dirichlet}).  \vspace{\baselineskip} 

We now recall an asymptotic lower bound from~\cite{nitzschner2017solidification} on the trapping probability of Brownian motion starting in a non-empty compact set $A \subseteq \mathbb{R}^d$ by surrounding `porous interfaces', together with a corresponding asymptotic lower bound on the Brownian capacity of such porous interfaces. These estimates will be pivotal in the derivation of the bounds \eqref{eq:Localization}, \eqref{eq:PushDown} and \eqref{eq:ProfileDescription} of the following chapters. Let $U_0$ be a non-empty Borel subset of $\bbR^d$ with complement $U_1 = \mathbb{R}^d \setminus U_0$ and boundary $ \partial U_0 = \partial U_1$. One measures the local density of $U_1$ at $x \in \bbR^d$ in dyadic scales
\begin{equation}
\widehat{\sigma}_{\ell}(x) = \frac{|B_{\infty}(x,2^{-\ell})\cap U_1|}{|B_{\infty}(x,2^{-\ell})|}, \qquad \ell \in \bbZ,
\end{equation}
 where $| \cdot |$ stands for the Lebesgue measure on $\mathbb{R}^d$. We furthermore introduce for $\ell_\ast$ non-negative integer and for a non-empty compact subset $A$ of $\mathbb{R}^d$
 \begin{equation}
 \label{eq:SegmentationClass}
 \mathcal{U}_{\ell_\ast, A} = \, \begin{minipage}{0.6\linewidth}
  the collection of bounded Borel subsets $U_0 \subseteq \mathbb{R}^d$ with $\widehat{\sigma}_{\ell}(x) \leq \tfrac{1}{2}$  for all $x\in A$  and $\ell \geq \ell_\ast$.
 \end{minipage} 
\end{equation}  
For a given non-empty Borel subset $U_0 \subseteq \bbR^d$, $\varepsilon > 0$ and $\eta \in (0,1)$ we consider the following class of `porous interfaces' 
\begin{equation}
\label{eq:ClassofporousInterf}
\begin{split}
\mathscr{S}_{U_0,\varepsilon,\eta} &= \ \text{the class of }\Sigma \subseteq \mathbb{R}^d \text{ compact with } W_z[H_\Sigma < \tau_\varepsilon] \geq \eta, \text{ for all } z \in \partial U_0.
\end{split}
\end{equation}
Essentially, $\varepsilon$ controls the distance of the porous interface $\Sigma$ from $\partial U_0$ and $\eta$ corresponds to the strength with which it is `felt'. With this, we can quote the solidification estimate from (3.3) of Theorem 3.1 in~\cite{nitzschner2017solidification}, which provides for $\eta \in (0,1)$ in the limit $\varepsilon/ 2^{-\ell_\ast}$ going to zero uniform controls on the probability that Brownian motion starting in $A$ hits $\Sigma \in \cS_{U_0,\varepsilon,\eta}$, when $U_0 \in \cU_{\ell_\ast,A}$,
\begin{equation}
\label{eq:SolidificationProb}
\lim_{u \rightarrow 0} \sup_{\varepsilon \leq u2^{-\ell_\ast}} \sup_{U_0 \in \mathcal{U}_{\ell_\ast,A}} \sup_{\Sigma \in \mathscr{S}_{U_0,\varepsilon,\eta}} \sup_{x\in A} W_x[H_\Sigma = \infty] = 0.
\end{equation}
This roughly means that a Brownian motion starting in $A$ cannot escape any surrounding porous interface. In addition to~\eqref{eq:SolidificationProb}, we need the related capacity lower bound
\begin{equation}
\label{eq:SolidificationEstimate}
\lim_{u \rightarrow 0} \inf_{\varepsilon \leq u2^{-{\ell_\ast}}} \inf_A \inf_{U_0 \in \mathcal{U}_{\ell_\ast,A}} \inf_{\Sigma \in \mathscr{S}_{U_0,\varepsilon,\eta}} \frac{\text{cap}(\Sigma)}{\text{cap}(A)} = 1,
\end{equation}
where $A$ varies in the class of non-empty compact subsets of $\bbR^d$ with positive capacity, see (3.15) of Corollary 3.4 in~\cite{nitzschner2017solidification}. \vspace{\baselineskip}

Finally, we state and prove two lemmas that arise from the solidification estimate~\eqref{eq:SolidificationProb}. We start with Lemma~\ref{prop:harmonic_bound} below, which can be seen as a generalization of~\eqref{eq:SolidificationProb} to $\bbR^d$ and will be employed in Proposition~\ref{prop:3.3} (cf.\ \eqref{eq:pointwisesolid}), to obtain an upper bound on the variance of a certain Gaussian field which is used to encode the event under the probability in \eqref{eq:Prop3_3_claim}. 
\begin{lemma}\label{prop:harmonic_bound} Consider a non-empty compact $A\subseteq \bbR^d$ and $\eta \in (0,1)$. Then,
\begin{equation}\label{eq:solidif_escaping}
\limsup_{u\to 0}  \sup_{\varepsilon \leq u 2^{-\ell_*}} \sup_{U_0 \in \cU_{\ell_*,A}} \sup_{\Sigma \in \cS_{U_0, \varepsilon, \eta}}
\sup_{x\in\bbR^d}\Big( W_x[ H_\Sigma  = \infty] - W_x[H_A = \infty]\Big) = 0.
\end{equation} 
\end{lemma}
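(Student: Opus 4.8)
The goal is to show that, uniformly over the porous interfaces $\Sigma$ arising in the solidification setup, the escape probability $W_x[H_\Sigma=\infty]$ is asymptotically no larger than $W_x[H_A=\infty]$, even when $x$ ranges over all of $\bbR^d$ rather than just over $A$. The natural approach is to reduce the statement for general $x\in\bbR^d$ to the already-established estimate \eqref{eq:SolidificationProb}, which concerns only starting points in $A$, by conditioning the Brownian motion on whether and where it enters $A$.

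\textbf{Step 1: Split according to entry into $A$.} Fix $x\in\bbR^d$. On the event $\{H_A=\infty\}$ the Brownian motion never enters $A$, which contributes exactly $W_x[H_A=\infty]$ to both sides and cancels in the difference. On the complementary event $\{H_A<\infty\}$, apply the strong Markov property at the stopping time $H_A$: by the tower property,
\begin{equation*}
W_x[H_\Sigma=\infty,\, H_A<\infty] = E_x\big[ H_A<\infty,\; W_{Z_{H_A}}[H_\Sigma=\infty]\big].
\end{equation*}
Since $Z_{H_A}\in A$ almost surely on $\{H_A<\infty\}$, the inner probability is bounded by $\sup_{y\in A} W_y[H_\Sigma=\infty]$. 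Therefore
\begin{equation*}
W_x[H_\Sigma=\infty] - W_x[H_A=\infty]
= W_x[H_\Sigma=\infty,\,H_A<\infty] - W_x[H_\Sigma<\infty,\,H_A=\infty]
\leq W_x[H_A<\infty]\cdot \sup_{y\in A} W_y[H_\Sigma=\infty],
\end{equation*}
where for the equality one uses $\{H_\Sigma=\infty\}=\{H_\Sigma=\infty,H_A<\infty\}\cup\{H_\Sigma=\infty,H_A=\infty\}$ together with $\{H_A=\infty\}=\{H_\Sigma<\infty,H_A=\infty\}\cup\{H_\Sigma=\infty,H_A=\infty\}$, and the displayed quantity in the theorem is bounded above after subtracting the common term. (One should take a little care with the direction of the inequality: a cleaner route is simply $W_x[H_\Sigma=\infty]\le W_x[H_A=\infty]+W_x[H_A<\infty,\,H_\Sigma=\infty]\le W_x[H_A=\infty]+\sup_{y\in A}W_y[H_\Sigma=\infty]$.)

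\textbf{Step 2: Take the relevant suprema and apply \eqref{eq:SolidificationProb}.} The bound obtained in Step 1 is uniform in $x\in\bbR^d$ (using $W_x[H_A<\infty]\le 1$), so
\begin{equation*}
\sup_{x\in\bbR^d}\big( W_x[H_\Sigma=\infty]-W_x[H_A=\infty]\big) \leq \sup_{y\in A} W_y[H_\Sigma=\infty].
\end{equation*}
Now take the suprema over $\Sigma\in\cS_{U_0,\varepsilon,\eta}$, $U_0\in\cU_{\ell_*,A}$, $\varepsilon\le u2^{-\ell_*}$ and the $\limsup$ as $u\to 0$; the right-hand side then converges to $0$ precisely by the solidification estimate \eqref{eq:SolidificationProb}. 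This yields \eqref{eq:solidif_escaping}.

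\textbf{Main obstacle.} There is no serious analytic difficulty here; the only thing to be careful about is the bookkeeping of events and the direction of inequalities in the conditioning step — in particular making sure the decomposition isolates exactly the term $W_x[H_A=\infty]$ to be subtracted, and that the remaining term is controlled by a quantity over starting points \emph{in} $A$ so that \eqref{eq:SolidificationProb} applies verbatim. One also implicitly uses that $H_\Sigma$ and $H_A$ are genuine stopping times for the Brownian filtration (true since $\Sigma$ and $A$ are closed) and that $Z_{H_A}\in A$ on $\{H_A<\infty\}$, which holds because $A$ is closed. Everything else is a direct application of the strong Markov property.
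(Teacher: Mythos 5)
Your argument for the substantive half of the lemma is exactly the paper's: split on whether the Brownian motion ever enters $A$, apply the strong Markov property at $H_A$, bound the resulting term by $\sup_{y\in A}W_y[H_\Sigma=\infty]$ uniformly in $x$, and invoke the solidification estimate \eqref{eq:SolidificationProb}. Two remarks, one small and one that is an actual gap. The small one: your displayed identity $W_x[H_\Sigma=\infty,H_A<\infty]=E_x\big[H_A<\infty,\,W_{Z_{H_A}}[H_\Sigma=\infty]\big]$ is not an equality in general; on $\{H_\Sigma=\infty,H_A<\infty\}$ one only has $H_\Sigma\circ\theta_{H_A}=\infty$, so the strong Markov property gives an inequality ``$\leq$'' (which is all you need, and is how the paper phrases it via \eqref{eq:step2}).

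The genuine gap: the lemma asserts that the $\limsup$ \emph{equals} $0$, whereas your Steps 1--2 only show it is $\leq 0$. To conclude equality you must also exhibit admissible choices making the quantity nonnegative in the limit. The paper does this with a trivial construction: fix $z\in A$, take $U_0=\Sigma=A_{\ell_*}$, the closed $2^{-\ell_*}$-neighborhood of $A$ (which lies in $\cU_{\ell_*,A}$ and, for $\varepsilon\leq u2^{-\ell_*}$, in $\cS_{U_0,\varepsilon,\eta}$), and note $W_z[H_{A_{\ell_*}}=\infty]=0=W_z[H_A=\infty]$, so the inner supremum over $x\in\bbR^d$ is $\geq 0$ for these choices; letting $\varepsilon\to 0$ gives the matching lower bound. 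This addition is easy, but without it your proposal proves only the one-sided estimate (which, to be fair, is the part actually used later in the paper), not the stated identity.
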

\begin{proof}
We start by showing that the left hand side of~\eqref{eq:solidif_escaping} is less or equal than $0$.
For $x\in \bbR^d$, one has
\begin{equation}\label{eq:step1}
\begin{aligned}
  W_x[H_\Sigma &= \infty] \leq W_x[H_\Sigma = \infty, H_A<\infty] + W_x[H_A = \infty] \\
  & =E^{W_x}\big[W_x[H_\Sigma = \infty, H_A<\infty | Z_{H_A}]\big] + W_x[H_A = \infty],
\end{aligned}
\end{equation}
where $E^{W_x}$ is the expectation with respect to $W_x$.
We observe that on the event $\{H_A< H_\Sigma, H_A<\infty\}$ we have $H_\Sigma = H_\Sigma\circ\theta_{H_A} + H_A$, where $\theta_{H_A}$ is the canonical shift by $H_A$. Thus, an application of the strong Markov property yields
\begin{equation}
\label{eq:step2}
\begin{split}
E^{W_x}\big[W_x[H_\Sigma = &\infty, H_A<\infty | Z_{H_A}]\big]  \\
& \leq E^{W_x}\big[\IND_{\{ H_A<\infty \}} W_{Z_{H_A}}[H_\Sigma=\infty]\big] \leq \sup_{y\in A}W_{y}[H_\Sigma=\infty].
\end{split}
\end{equation}
Combining~\eqref{eq:step1} and~\eqref{eq:step2}, we obtain that
\begin{equation}\label{eq:step3}
\sup_{x\in \bbR^d}\Big(W_x[H_\Sigma = \infty] - W_x[H_A = \infty]\Big)\leq  \sup_{y\in A}W_{y}[H_\Sigma=\infty].
\end{equation}
By taking limits in~\eqref{eq:step3} and by using~\eqref{eq:SolidificationProb} we arrive at
\begin{equation*}
\limsup_{u\to 0}  \sup_{\varepsilon \leq u 2^{-\ell_*}} \sup_{U_0 \in \cU_{\ell_*,A}} \sup_{\Sigma \in \cS_{U_0, \varepsilon, \eta}}
\sup_{x\in\bbR^d}\Big( W_x[ H_\Sigma  = \infty] - W_x[H_A = \infty]\Big) \leq 0.
\end{equation*}
Finally, we show that the limit equals $0$ by providing a lower bound with a particular choice. Fix any $z\in A$, $\ell_*>0$ and let $A_{\ell_*}$ be the set of points at sup-distance at most $2^{-\ell_*}$ from $A$.
Then for $\varepsilon \leq u 2^{-\ell_*}$, choosing $U_0 = A_{\ell_*} = \Sigma$, we see that $U_0\in \cU_{\ell_*,A}$  and $\Sigma \in \cS_{U_0, \varepsilon, \eta}$.
In addition, $W_z[H_{A_{\ell_*}} = \infty] = 0 = W_z[H_{A} = \infty]$.
Letting $\varepsilon$ go to $0$ shows that the limit must be non-negative.
\end{proof}
In the remainder of this section we will show in Lemma~\ref{thm:SolidifDirichlet} below that in the limit $\varepsilon / 2^{-\ell_\ast} \rightarrow 0$, the Dirichlet energy of $h_A - h_\Sigma$ is bounded from above by the capacity difference $\capa(\Sigma) - \capa(A)$, uniformly over all compacts $A \subseteq \bbR^d$ and all porous interfaces $\Sigma$. This result will be needed in the proof of Theorem~\ref{thm:MainUpperBound} (see~\eqref{eq:BoundSigma_LargeDirichletform}--\eqref{eq:BoundSplit_1}), to rule out, with high probability, the existence of atypical interfaces of bad boxes.
\begin{lemma}
\label{thm:SolidifDirichlet} Let $\eta\in (0,1)$ be fixed. Then,
\begin{equation}
\label{eq:BoundDirichletForm}
  \limsup_{u\to 0}  \sup_{\varepsilon \leq u 2^{-\ell_*}} \sup_{A} \sup_{U_0 \in \cU_{\ell_*,A}} \sup_{\Sigma \in \cS_{U_0, \varepsilon, \eta}}
  \bigg [\cE(h_A - h_\Sigma,h_A - h_\Sigma) - (\capa(\Sigma) - \capa(A))\bigg] = 0,
\end{equation}
where $A$ varies in the class of non-empty compact subsets of $\bbR^d$. 
\end{lemma}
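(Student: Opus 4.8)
The core identity to exploit is the variational characterization of capacity together with the orthogonality properties of harmonic potentials in the Dirichlet form. Recall that for a bounded open or closed set $B$, one has $\cE(h_B,h_B)=\capa(B)$ by~\eqref{eq:EqualityCapacityDirichlet}, and moreover $h_B$ is harmonic on $B^c$ (or rather on the complement of $\overline{B}$), so that $\cE(h_B,f)=0$ for every $f$ in the extended Dirichlet space vanishing on a neighborhood of $B$; more usefully, $\cE(h_B,f)=\langle f,\mu_B\rangle$ where $\mu_B$ is the equilibrium measure of $B$. The plan is to expand
\[
\cE(h_A-h_\Sigma,h_A-h_\Sigma)=\cE(h_A,h_A)-2\cE(h_A,h_\Sigma)+\cE(h_\Sigma,h_\Sigma)=\capa(A)-2\cE(h_A,h_\Sigma)+\capa(\Sigma),
\]
so that the bracket in~\eqref{eq:BoundDirichletForm} equals $2\capa(\Sigma)-2\cE(h_A,h_\Sigma)$. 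Hence it suffices to show
\[
\liminf_{u\to 0}\ \inf_{\varepsilon\leq u2^{-\ell_*}}\ \inf_A\ \inf_{U_0\in\cU_{\ell_*,A}}\ \inf_{\Sigma\in\cS_{U_0,\varepsilon,\eta}}\Big(\cE(h_A,h_\Sigma)-\capa(\Sigma)\Big)\geq 0,
\]
that is, $\cE(h_A,h_\Sigma)\geq\capa(\Sigma)-o(1)$ uniformly.

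**Reducing to a probabilistic estimate.** Using $\cE(h_\Sigma,h_\Sigma)=\capa(\Sigma)$ again, write $\cE(h_A,h_\Sigma)-\capa(\Sigma)=\cE(h_A-h_\Sigma,h_\Sigma)$. Now $\cE(f,h_\Sigma)=\langle f,\mu_\Sigma\rangle=\int f\,d\mu_\Sigma$ for $f$ in the extended Dirichlet space, where $\mu_\Sigma$ is the equilibrium measure of $\Sigma$ (this is the standard representation of the Dirichlet form against an equilibrium potential; it follows from Theorem 4.3.3 and Theorem 2.1.5 of~\cite{fukushima2010dirichlet} already invoked in the text). Therefore
\[
\cE(h_A,h_\Sigma)-\capa(\Sigma)=\int\big(h_A(z)-h_\Sigma(z)\big)\,\mu_\Sigma(\De z).
\]
Since $\mu_\Sigma$ is supported on $\Sigma$ and $h_\Sigma\equiv 1$ $\mu_\Sigma$-a.e.\ (the equilibrium potential equals $1$ on the set, up to a polar set), this is $\int_\Sigma\big(h_A(z)-1\big)\mu_\Sigma(\De z)=-\int_\Sigma\big(1-h_A(z)\big)\mu_\Sigma(\De z)=-\int_\Sigma W_z[\widetilde H_A=\infty]\,\mu_\Sigma(\De z)$. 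So the bracket in~\eqref{eq:BoundDirichletForm} equals $2\int_\Sigma W_z[\widetilde H_A=\infty]\,\mu_\Sigma(\De z)\geq 0$, and I must show this is $o(1)$ uniformly, i.e.
\[
\sup_{z\in\Sigma}W_z[H_A=\infty]\cdot\capa(\Sigma)\longrightarrow\ ?\ ;
\]
but $\capa(\Sigma)$ need not be bounded, so the crude bound fails and one genuinely needs the solidification input. The right move is to bound $\int_\Sigma(1-h_A)\,d\mu_\Sigma$ using that $1-h_A$ is itself small "where $\mu_\Sigma$ lives": by Lemma~\ref{prop:harmonic_bound}, $W_z[H_\Sigma=\infty]-W_z[H_A=\infty]=o(1)$ uniformly in $z\in\bbR^d$; evaluated at $\mu_\Sigma$-typical points $z\in\Sigma$ where $W_z[H_\Sigma=\infty]=0$ we only get $W_z[H_A=\infty]\leq o(1)$ pointwise, which still leaves the mass-of-$\mu_\Sigma$ problem.

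**The real argument.** The clean route is instead: by~\eqref{eq:SolidificationEstimate}, $\capa(\Sigma)\geq(1-o(1))\capa(A)$, and by monotonicity of capacity together with $A\subseteq A_{\ell_*}\subseteq$ a fixed neighborhood, $\capa(\Sigma)$ has no upper bound in general — however, what we actually need is only an upper bound on $\int_\Sigma(1-h_A)\,d\mu_\Sigma$, and here one writes $1-h_A(z)=W_z[\widetilde H_A=\infty]$ and uses that for $z$ in the support of $\mu_\Sigma$ one can run the Brownian motion and compare hitting $\Sigma$ versus hitting $A$: precisely, $\int_\Sigma(1-h_A)\,d\mu_\Sigma=\capa(\Sigma)-\cE(h_A,h_\Sigma)$, and by symmetry of the Dirichlet form $\cE(h_A,h_\Sigma)=\int_{A}h_\Sigma\,d\mu_A=\int_A W_z[\widetilde H_\Sigma<\infty]\,\mu_A(\De z)$ — wait, $h_\Sigma(z)=W_z[\widetilde H_\Sigma<\infty]$. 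Thus
\[
\cE(h_A,h_\Sigma)=\int_A W_z[\widetilde H_\Sigma<\infty]\,\mu_A(\De z)\geq\Big(\inf_{z\in A}W_z[H_\Sigma<\infty]\Big)\capa(A)=\big(1-\sup_{z\in A}W_z[H_\Sigma=\infty]\big)\capa(A).
\]
So the bracket is at most $\capa(\Sigma)-\capa(A)+2\sup_{z\in A}W_z[H_\Sigma=\infty]\,\capa(A)$. The first difference tends to $0$... but only from the wrong side: $\capa(\Sigma)\geq(1-o(1))\capa(A)$ gives a lower bound, not an upper bound, on $\capa(\Sigma)-\capa(A)$. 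I therefore expect the genuine obstacle to be controlling $\capa(\Sigma)$ from above, and the resolution is that one does \emph{not} prove $\capa(\Sigma)-\capa(A)\to 0$; rather one keeps the expression as $2\capa(\Sigma)-2\cE(h_A,h_\Sigma)=2\big(\capa(\Sigma)-\cE(h_A,h_\Sigma)\big)$ and bounds it directly via $\capa(\Sigma)-\cE(h_A,h_\Sigma)=\cE(h_\Sigma-h_A,h_\Sigma)=\int(h_\Sigma-h_A)\,d\mu_\Sigma=\int_\Sigma(1-h_A)\,d\mu_\Sigma$. Now $h_A\leq h_\Sigma$ pointwise is \emph{false} in general, but on $\Sigma$ one has $h_\Sigma=1\geq h_A$, so this integral is $\geq 0$; for the upper bound I observe that $h_A$ is superharmonic off $A$ and dominates $h_\Sigma$'s effect through the sweeping identity $\int_\Sigma h_A\,d\mu_\Sigma=\cE(h_A,h_\Sigma)=\int_A h_\Sigma\,d\mu_A\geq(1-\sup_{A}W_\cdot[H_\Sigma=\infty])\capa(A)$, giving $\int_\Sigma(1-h_A)d\mu_\Sigma\leq\capa(\Sigma)-(1-\sup_A W_\cdot[H_\Sigma=\infty])\capa(A)$. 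The decisive step is then to bound $\capa(\Sigma)$ above: here one uses that by Lemma~\ref{prop:harmonic_bound}, $h_\Sigma\leq h_A+o(1)$ pointwise, whence $\capa(\Sigma)=\cE(h_\Sigma,h_\Sigma)=\int h_\Sigma\,d\mu_\Sigma\leq\int(h_A+o(1))\,d\mu_\Sigma=\cE(h_A,h_\Sigma)+o(1)\capa(\Sigma)$, so $(1-o(1))\capa(\Sigma)\leq\cE(h_A,h_\Sigma)\leq\|h_A\|_\infty\capa(\Sigma)$ — still circular. The clean fix, which I would adopt, is: $\capa(\Sigma)=\int_\Sigma 1\,d\mu_\Sigma=\int_\Sigma h_\Sigma\,d\mu_\Sigma$ and by Lemma~\ref{prop:harmonic_bound}, $1=h_\Sigma(z)\cdot\mathbf{1}+W_z[H_\Sigma=\infty]\leq h_A(z)+\delta(u)$ for $z\in\Sigma$ (since $W_z[H_\Sigma=\infty]=0$ there and $h_\Sigma=1$), so $\capa(\Sigma)\leq\int_\Sigma(h_A+\delta(u))\,d\mu_\Sigma=\cE(h_A,h_\Sigma)+\delta(u)\capa(\Sigma)$, i.e. $\cE(h_A,h_\Sigma)\geq(1-\delta(u))\capa(\Sigma)$, hence the bracket $2(\capa(\Sigma)-\cE(h_A,h_\Sigma))\leq 2\delta(u)\capa(\Sigma)$. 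Finally $\capa(\Sigma)$ is controlled because $\Sigma\in\cS_{U_0,\varepsilon,\eta}$ with $U_0$ bounded forces $\Sigma\subseteq$ a bounded region (the $\varepsilon$-neighborhood of $\partial U_0$), but to get a \emph{uniform} bound one instead notes $\capa(\Sigma)\leq\cE(h_A,h_\Sigma)/(1-\delta(u))$ and $\cE(h_A,h_\Sigma)\leq\cE(h_A,h_A)^{1/2}\cE(h_\Sigma,h_\Sigma)^{1/2}=\capa(A)^{1/2}\capa(\Sigma)^{1/2}$ by Cauchy–Schwarz, yielding $\capa(\Sigma)^{1/2}\leq\capa(A)^{1/2}/(1-\delta(u))$ and therefore $\capa(\Sigma)\leq\capa(A)/(1-\delta(u))^2$. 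Plugging back, the bracket is at most $2\delta(u)\capa(A)/(1-\delta(u))^2$ — but this is \emph{not} uniform over $A$ since $\capa(A)$ is unbounded; so the statement must really be read with the $\sup_A$ including the normalization implicit, or else the correct conclusion is that the bracket is $\leq\varepsilon'\capa(A)$, and I would present the argument in that scaled form, matching how~\eqref{eq:SolidificationEstimate} is itself stated as a ratio. I expect the main subtlety to be exactly this bookkeeping of where $\capa(A)$ appears, and the key technical inputs are Lemma~\ref{prop:harmonic_bound}, the sweeping/symmetry identity $\int_\Sigma h_A\,d\mu_\Sigma=\int_A h_\Sigma\,d\mu_A$, and Cauchy–Schwarz~\eqref{eq:CSEnergies} for the Dirichlet form.
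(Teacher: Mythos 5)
Your proposal breaks at the very first algebraic step, and the error propagates through everything that follows. Expanding with \eqref{eq:EqualityCapacityDirichlet}, one has $\cE(h_A-h_\Sigma,h_A-h_\Sigma)-(\capa(\Sigma)-\capa(A))=2\big(\capa(A)-\cE(h_A,h_\Sigma)\big)$, not $2\big(\capa(\Sigma)-\cE(h_A,h_\Sigma)\big)$ as you write: the $\capa(\Sigma)$ terms cancel and the $\capa(A)$ terms add. Consequently the statement you set out to prove, $\cE(h_A,h_\Sigma)\geq\capa(\Sigma)-o(1)$ uniformly, is false: $\cE(h_A,h_\Sigma)=\int_A h_\Sigma\,e_A(\De x)\leq\capa(A)$, while $\capa(\Sigma)$ can exceed $\capa(A)$ by an arbitrary factor (nothing in \eqref{eq:ClassofporousInterf} caps the capacity of $\Sigma$; $U_0$ may be huge compared to $A$, and enlarging $\Sigma$ keeps it in the class). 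Likewise your identity ``bracket $=2\int_\Sigma W_z[\widetilde H_A=\infty]\,e_\Sigma(\De z)$'' has $A$ and $\Sigma$ interchanged; the correct one is $2\int_A W_x[\widetilde H_\Sigma=\infty]\,e_A(\De x)$. The irony is that in the middle of your ``real argument'' you do write the correct key estimate, $\cE(h_A,h_\Sigma)=\int_A h_\Sigma\,e_A(\De x)\geq\big(1-\sup_{x\in A}W_x[H_\Sigma=\infty]\big)\capa(A)$, which is exactly the paper's \eqref{eq:Dirichlet_A_Sigma}; with the correct expansion this, combined with \eqref{eq:SolidificationProb}, finishes the upper bound at once, and the lemma is completed by exhibiting equality along the choice $U_0=\Sigma=A_{\ell_*}$, for which $\cE(h_A,h_{A_{\ell_*}})\to\capa(A)$ by dominated convergence --- a part of the statement your write-up never addresses.

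Your attempted rescue also misuses Lemma \ref{prop:harmonic_bound}. That lemma gives $W_x[H_\Sigma=\infty]\leq W_x[H_A=\infty]+\delta(u)$, i.e.\ $h_A\leq h_\Sigma+\delta(u)$ pointwise; you invoke it in the reverse direction, claiming $h_\Sigma\leq h_A+\delta(u)$ and, at $z\in\Sigma$, $W_z[H_A=\infty]\leq\delta(u)$. At points of $\Sigma$ the lemma is vacuous, and your claim is false: $\Sigma$ lies near $\partial U_0$, typically far from $A$, and Brownian motion started on $\Sigma$ need not hit $A$ at all. Hence the chain $\capa(\Sigma)\leq\cE(h_A,h_\Sigma)+\delta(u)\capa(\Sigma)$ and the conclusion $\capa(\Sigma)\leq\capa(A)/(1-\delta(u))^2$ are unsound. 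Your closing remark about non-uniformity in $A$ is, however, well taken once transplanted to the correct error term: the argument yields the bound $2\sup_{x\in A}W_x[H_\Sigma=\infty]\,\capa(A)$, which is small uniformly only over $A$ of bounded capacity (or after normalizing by $\capa(A)$, in the spirit of \eqref{eq:SolidificationEstimate}); the paper states the lemma with an unrestricted supremum over $A$ and is terse on this point, but in its applications in Section 4 the compact sets in play stay within a fixed bounded family, so only that restricted form is ever used. This observation does not repair your derivation, which fails earlier, at the algebra and at the direction of Lemma \ref{prop:harmonic_bound}.
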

\begin{proof} First we notice that by~\eqref{eq:EqualityCapacityDirichlet} and the bilinearity of the Dirichlet form, one has
\begin{equation}
0 \leq \cE(h_A - h_\Sigma,h_A - h_\Sigma)  = \capa(A) + \capa(\Sigma) - 2 \cE(h_A,h_\Sigma),
\end{equation}
so to conclude it suffices to show that
\begin{equation}
\label{eq:BoundDirichletForm2}
    \varliminf_{u\to 0} \inf_{\varepsilon \leq u 2^{-\ell_*}} \inf_{A}  \inf_{U_0 \in \cU_{\ell_*,A}} \inf_{\Sigma \in \cS_{U_0, \varepsilon, \eta}} \Big(\cE(h_A,h_\Sigma) -\capa(A)\Big) = 0.
\end{equation}
Using Theorem 2.2.5, p.\ 86 of~\cite{fukushima2010dirichlet}, we have that $\cE(h_\Sigma,h_A) = \int h_\Sigma(x) e_A(dx)$, where for a compact set $B \subseteq \bbR^d$, $e_B$ denotes the equilibrium measure of $B$, which has total mass $\capa(B)$ and is supported on $B$, see for instance p.57 of~\cite{port2012brownian}. From this, we immediately get
\begin{equation}
\label{eq:Dirichlet_A_Sigma}
\cE(h_A,h_\Sigma) = \int W_x[H_\Sigma < \infty] e_A(dx) \geq \inf_{x \in A} W_x[H_\Sigma < \infty] \capa(A),
\end{equation}
where we have used that the set of points $x \in \bbR^d$ with $W_x[\widetilde{H}_\Sigma < \infty] \neq  W_x[H_\Sigma < \infty]$ has null $e_A$-measure (see also (3.18) in~\cite{nitzschner2017solidification} and the argument following it). We therefore obtain
\begin{equation}
\varliminf_{u\to 0} \inf_{\varepsilon \leq u 2^{-\ell_*}}  \inf_{U_0 \in \cU_{\ell_*,A}} \inf_{\Sigma \in \cS_{U_0, \varepsilon, \eta}} \Big(\cE(h_A,h_\Sigma) -\capa(A)\Big) \geq 0,
\end{equation}
having used~\eqref{eq:SolidificationProb}. To show that the limit is equal to $0$, let $A_{\ell_\ast}$ be the set of points at sup-distance at most $2^{-\ell_\ast}$ from $A$. For $\varepsilon \leq u 2^{-\ell_\ast}$, choose $U_0 = A_{\ell_\ast} = \Sigma$ and use that $\cE(h_A,h_{A_{\ell_\ast}}) \rightarrow \capa(A)$ as $\ell_{\ast} \rightarrow \infty$, by~\eqref{eq:Dirichlet_A_Sigma} and dominated convergence. The claim then follows by letting $\varepsilon \rightarrow 0$ and $\ell_\ast \rightarrow \infty$ such that $\varepsilon 2^{\ell_\ast}$ tends to $0$. 
\end{proof}

\section{Entropic push-down by disconnection}

In this section we derive in Theorem~\ref{thm:pushdown} an asymptotic upper bound on the probability of the event that the level-set below $\alpha$ disconnects $A_N$ from $S_N$ \emph{and} that  $\langle \bbX_N,\eta\rangle$, that is,
the average of the Gaussian free field over some continuous and compactly supported function $\eta : \bbR^d \rightarrow [0,\infty)$, becomes bigger than $\langle  \cH^\alpha_{\mathring{A}},\eta\rangle + \Delta$, for some $\Delta>0$ (recall that $\cH^\alpha_{\mathring{A}} = -(\overline{h}-\alpha) h_{\mathring{A}}$ with $h_{\mathring{A}}$ the harmonic potential of $\mathring{A}$, see \eqref{eq:HarmonicPot}). 

As a consequence of Theorem~\ref{thm:pushdown}, when  $A$ is such that $\capa(A)=\capa(\mathring{A})$, one readily derives Corollary~\ref{thm:Corollary32} using~\eqref{eq:Equality_crit_par}.  From a qualitative perspective, Corollary~\ref{thm:Corollary32} should be understood as follows: the occurrence of the event $\cD^\alpha_N$ pushes the Gaussian free field down to a level smaller or equal than $\cH^\alpha_{\mathring{A}}(x/N) = -(h_*-\alpha) h_{\mathring{A}}(x/N)$, $x \in \bbZ^d$.
In particular, the effects of the disconnection event (which depends only on the values of the field in $B(0, MN)$) are felt globally on $\bbZ^d$. This is due to the slow decay of correlations for the Gaussian free field in dimensions $d\geq 3$.
\vspace{\baselineskip}

Throughout this and the next sections, we assume that $A\subseteq \bbR^d$ is a compact set with non-empty interior such that $A\subseteq \mathring{B}_\infty(0,M)$ for some fixed $M>0$. Also, recall the definition~\eqref{eq:measureGFF} of $\bbX_N$ and our convention on constants, given at the end of Section 1. We are ready to state the main result of this section. 
\begin{theorem}\label{thm:pushdown}
Consider $\Delta > 0$, $\alpha < \overline{h}$ and a continuous, compactly supported function $\eta : \bbR^d \rightarrow [0,\infty)$. Then one has the asymptotic upper bound
\begin{equation}\label{eq:thm31}
\begin{aligned}
\limsup_{N\to \infty} \frac{1}{N^{d-2}} \log  \bbP&\Big[ \langle  \bbX_N,\eta \rangle \geq\langle \cH^\alpha_{\mathring{A}},\eta \rangle +\Delta; \cD^{\alpha}_N\Big] \\
& \leq -\frac{1}{2d} (\overline{h}-\alpha)^2 \capa(\mathring{A}) - \frac{\Delta^2}{2 d} \frac{1}{E(\eta)},
\end{aligned}
\end{equation}
where $E(\eta) = \int \eta(x)g_{BM}(x,y)\eta(y)
\De x \De y$ is the energy associated to the function $\eta$ (we adopt the convention that the right-hand side is $-\infty$ if $\eta=0$). 
\end{theorem}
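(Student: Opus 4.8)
The plan is to combine the known large deviation upper bound for the disconnection event with an exponential Chebyshev bound on the event $\{\langle \bbX_N,\eta\rangle \geq \langle \cH^\alpha_{\mathring{A}},\eta\rangle + \Delta\}$, exploiting the fact that the extra ``excess'' the field must perform to have its $\eta$-average exceed the pinned value $\langle \cH^\alpha_{\mathring A},\eta\rangle$ costs an additional exponential factor governed by the Gaussian nature of $\varphi$. Concretely, I would first reduce the disconnection event to a geometric statement about interfaces of ``blocking'' boxes: as in \cite{nitzschner2018entropic}, on $\cD^\alpha_N$ there is a porous interface $\Sigma$ (a union of mesoscopic boxes on which suitable harmonic averages of $\varphi$ are below $\approx\alpha$) that separates $A_N$ from $S_N$; coarse-graining produces only $\exp(o(N^{d-2}))$ many candidate interfaces, so one can fix $\Sigma$ at the cost of a subexponential combinatorial factor. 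On the event that $\Sigma$ is the blocking interface, the field on the interface boxes must be shifted down, and the solidification estimate \eqref{eq:SolidificationProb} (via Lemma~\ref{prop:harmonic_bound}) forces the harmonic average $h^{\bbR^d\setminus \Sigma_N}_{\cdot}$ — more precisely a discrete analog — to be close to $\cH^\alpha_{\mathring A}(\cdot/N)$ on the relevant set, which is what produces the capacity term $-\frac{1}{2d}(\overline h-\alpha)^2\capa(\mathring A)$.

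The second ingredient is the refinement that handles $\Delta$. Write $\varphi_x = h^{\cU}_x + \psi^{\cU}_x$ for $\cU$ the complement of the (discretized) interface $\Sigma_N$, using the domain Markov property: $\psi^{\cU}$ is an independent centered Gaussian field with covariance $g_{\cU}(\cdot,\cdot)\leq g(\cdot,\cdot)$. The event $\cD^\alpha_N$ restricted to ``$\Sigma$ blocks'' is measurable with respect to the harmonic part $h^{\cU}$ together with the restriction of $\varphi$ to $\Sigma_N$ and outside, which one shows forces $\langle \bbX_N,\eta\rangle$ contributed by the ``conditioned'' part to be $\le \langle \cH^\alpha_{\mathring A},\eta\rangle + o(1)$ on a set carrying most of the probability (here the capacity estimate \eqref{eq:SolidificationEstimate} and the pointwise bound of Lemma~\ref{prop:harmonic_bound} enter). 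Hence on the event in \eqref{eq:thm31} the fluctuation must come from the independent free part $\psi^{\cU}$, i.e.\ $\langle N^{-d}\sum_x \psi^{\cU}_x \delta_{x/N},\eta\rangle \gtrsim \Delta - o(1)$. The random variable $Y_N := N^{2-d}\langle N^{-d}\sum_x \psi^{\cU}_x\delta_{x/N},\eta\rangle$ — after the correct $N$-rescaling — is a centered Gaussian with variance bounded by $N^{2-d}\cdot N^{-2d}\sum_{x,y}\eta(x/N)\eta(y/N)g(x,y) \to E(\eta)$ (using \eqref{eq:AsymptoticBehaviourGreen}, a Riemann-sum argument, and the standard fact $g_{BM}(x,y) = C_d|x-y|^{2-d} = \tfrac{2}{d}\lim N^{?}g(\cdot)$ with the normalization matching the $\tfrac{1}{2d}$ prefactor). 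A Gaussian tail bound then gives $\bbP[Y_N\ge \Delta - o(1)]\le \exp(-\tfrac{N^{d-2}}{2}(\Delta-o(1))^2/E(\eta)\cdot(\tfrac{2}{2d}?))$; tracking the constant $C_d$ versus $d$ normalization yields exactly the rate $\tfrac{\Delta^2}{2d}\tfrac{1}{E(\eta)}$. Independence of $\psi^{\cU}$ from the disconnection-carrying $\sigma$-algebra lets one multiply the two exponential costs.

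I would organize this as: (i) a coarse-graining lemma producing the interface $\Sigma$ and the subexponential number of choices; (ii) a ``downshift'' lemma identifying, on $\{\Sigma \text{ blocks}\}$, the dominant contribution to $\langle\bbX_N,\eta\rangle$ as $\langle\cH^\alpha_{\mathring A},\eta\rangle$ up to $o(1)$, with the capacity cost, using \eqref{eq:SolidificationProb}, \eqref{eq:SolidificationEstimate} and Lemma~\ref{prop:harmonic_bound}; (iii) the Gaussian tail estimate on the independent free part to extract the $\Delta^2$ cost; (iv) assembling via a union bound over interfaces and letting $N\to\infty$, then $\eta$-truncations/regularizations vanish. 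The case $\eta = 0$ is trivial since the event becomes $\{0\ge\Delta\}\cap\cD^\alpha_N = \emptyset$, consistent with $E(\eta)=0$ and the $-\infty$ convention.

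\textbf{Main obstacle.} The delicate point is step (ii): one must show that \emph{conditionally on the blocking interface}, the part of $\langle\bbX_N,\eta\rangle$ that is ``used up'' to realize disconnection does not already overshoot $\langle\cH^\alpha_{\mathring A},\eta\rangle$ by a macroscopic amount — otherwise the two costs could not be cleanly added. This requires a careful decomposition of $\varphi$ relative to $\Sigma_N$ and a one-sided argument (we only need an upper bound on the probability, so we may freely enlarge the interface/shrink the ``free'' region). The asymmetry noted in the introduction — that a pointwise solidification bound is available but no matching lower bound — is precisely what makes this tractable here, in contrast to \eqref{eq:Localization}; the free-part fluctuation being Gaussian with asymptotic variance exactly $E(\eta)$ (not merely $O(1)$ times it) is the second thing needing genuine care, and hinges on the Green-function asymptotics \eqref{eq:AsymptoticBehaviourGreen} together with the convergence of the killed Green function $g_{\cU_N}$ to the full $g$ away from the (lower-dimensional) interface.
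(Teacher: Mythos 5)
There is a genuine gap, and it sits exactly where you flagged your ``main obstacle'': step (ii) together with the factorization in step (iii). Your plan conditions on the blocking interface, claims that the disconnection information is carried by $\varphi$ on (a neighbourhood of) $\Sigma_N$, that on this event the ``conditioned'' part of $\langle \bbX_N,\eta\rangle$ is at most $\langle \cH^\alpha_{\mathring A},\eta\rangle+o(1)$ up to a negligible exceptional set, and that the independent field $\psi^{\cU}$ then has to supply the excess $\Delta$, so the two exponential costs multiply. Neither of the two inputs of this scheme is available. First, the coarse-grained event $\cD_{N,\kappa}$ is \emph{not} measurable with respect to $\varphi$ restricted to the interface boxes: what the coarse graining records is $h$-badness, i.e.\ $\inf_{D_z}h^{U_z}\leq -a$ for $z\in\cC$, and $h^{U_z}$ is a harmonic average of $\varphi$ \emph{outside} $U_z$, hence involves the field far from $C$ — including in the region where $\eta(\cdot/N)$ lives. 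So there is no $\sigma$-algebra split under which ``the disconnection-carrying part'' is independent of the part producing the $\Delta$-excess, and the clean multiplication of costs does not follow from the domain Markov property. Second, even granting a decomposition relative to $\Sigma_N$, the coarse-grained information only forces local harmonic averages to dip below $-a$ at \emph{some} point of each box $D_z$; it does not force $\varphi$ to be uniformly $\leq -(\overline h-\alpha)$ on the interface, which is what your use of Lemma~\ref{prop:harmonic_bound} (via $h^{\cU}_x\leq -(\overline h-\alpha)P_x[H_{\Sigma_N}<\infty]$) would require. Showing that, conditionally on the interface data, the harmonic part of $\langle\bbX_N,\eta\rangle$ does not overshoot $\langle\cH^\alpha_{\mathring A},\eta\rangle$ except at super-exponentially small cost is not an $o(1)$ technicality: it is quantitatively the same kind of large-deviation statement the theorem is asserting, so your outline assumes the essential estimate rather than proving it.

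The paper's proof avoids any factorization. After the same coarse graining, it encodes \emph{both} constraints — the $h$-badness of all boxes of $C$ and the event $\langle\bbX_N,\eta\rangle\geq\langle\cH^\alpha_{\mathring A},\eta\rangle+\Delta$ — into a single zero-mean Gaussian functional $\widehat Z_f=Z_f(1+\beta\langle h_{\mathring A},\eta\rangle)-\beta\langle\bbX_N,\eta\rangle$, with $Z_f=\sum_{z\in\cC}\lambda(z)h^z(f(z))$ weighted by the equilibrium measure of $C$, and observes the inclusion of the event of interest in $\{\inf_{f\in\cF}\widehat Z_f\leq -a-\beta\Delta\}$. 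One application of the Borell--TIS inequality then reduces everything to a variance bound for $\widehat Z_f$, whose three pieces are: $\var(Z_f)\lesssim 1/\capa_{\bbZ^d}(C)$ (boxes at mutual distance $\geq \overline K L_0$), $\var(\langle\bbX_N,\eta\rangle)\sim dE(\eta)N^{2-d}$, and — crucially — the cross term $\cG_N=\bbE[Z_f\langle\bbX_N,\eta\rangle]$, bounded from below via the pointwise solidification Lemma~\ref{prop:harmonic_bound} by $\approx\langle h_{\mathring A},\eta\rangle/\capa_{\bbZ^d}(C)$. The correlation between the interface values and the macroscopic average, which your scheme tries to decouple away, is precisely what the paper exploits; optimizing over the interpolation parameter $\beta$ then yields the sum $\frac{1}{2d}(\overline h-\alpha)^2\capa(\mathring A)+\frac{\Delta^2}{2dE(\eta)}$, with the capacity lower bound \eqref{eq:SolidificationEstimate} entering through $\capa_{\bbZ^d}(C)\gtrsim N^{d-2}\capa(\mathring A)/d$. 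To repair your argument you would either have to prove the conditional ``downshift'' estimate quantitatively (which is not easier than the theorem) or switch to a joint Gaussian-concentration argument of the Borell--TIS type as in the paper.
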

Before we move towards the proof of the above Theorem, we state a corollary that gives insight into the conditional measure $\bbP[ \ \cdot \ | \cD^\alpha_N]$ and makes use of~\cite{duminil2019equality}.
\begin{corollary}
\label{thm:Corollary32}
Consider $\Delta$, $\alpha$, $\eta$ as in Theorem~\ref{thm:pushdown}
 and assume that $A$ is regular in the sense that $\capa(A) = \capa(\mathring{A})$. Then, one has
\begin{equation}
\limsup_{N \rightarrow \infty} \frac{1}{N^{d-2}}\log \bbP\left[\langle \bbX_N,\eta \rangle \geq \langle \cH^\alpha_A,\eta \rangle + \Delta | \cD^\alpha_N \right] \leq - \frac{\Delta^2}{2 d} \frac{1}{E(\eta)}.
\end{equation} 
\end{corollary}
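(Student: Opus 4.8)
The plan is to write the conditional probability as the ratio
\[
\bbP\big[\langle\bbX_N,\eta\rangle\ge\langle\cH^\alpha_A,\eta\rangle+\Delta\,\big|\,\cD^\alpha_N\big]
=\frac{\bbP\big[\langle\bbX_N,\eta\rangle\ge\langle\cH^\alpha_A,\eta\rangle+\Delta;\,\cD^\alpha_N\big]}{\bbP[\cD^\alpha_N]},
\]
the denominator being positive for $N$ large by \eqref{eq:DiscLowerBound}. Taking $\tfrac1{N^{d-2}}\log(\cdot)$ and using the elementary bound $\limsup_N(a_N-b_N)\le\limsup_N a_N-\liminf_N b_N$, it then suffices to control the numerator by Theorem~\ref{thm:pushdown} and the denominator from below by the large deviation lower bound \eqref{eq:DiscLowerBound} recalled in the introduction, both read through the equality of critical parameters \eqref{eq:Equality_crit_par} and the regularity hypothesis $\capa(A)=\capa(\mathring A)$.

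The one substantive point — and the step I expect to be the main (if mild) obstacle — is to check that, under $\capa(A)=\capa(\mathring A)$, the two push-down profiles have the same pairing with $\eta$, i.e.\ $\langle\cH^\alpha_A,\eta\rangle=\langle\cH^\alpha_{\mathring A},\eta\rangle$; this is needed because $h_{\mathring A}\le h_A$ only gives $\langle\cH^\alpha_A,\eta\rangle\le\langle\cH^\alpha_{\mathring A},\eta\rangle$, which is the wrong direction for an upper bound. I would argue that $h_A=h_{\mathring A}$ Lebesgue-a.e.: since $\mathring A\subseteq A$ one has $0\le h_A-h_{\mathring A}$ pointwise, and using Theorem~2.2.5 of \cite{fukushima2010dirichlet} as in the proof of Lemma~\ref{thm:SolidifDirichlet} one gets $\cE(h_A,h_{\mathring A})=\int h_A\,\De e_{\mathring A}=\capa(\mathring A)$ (because $\supp(e_{\mathring A})\subseteq\overline{\mathring A}\subseteq A$, $h_A=1$ quasi-everywhere on $A$, and $e_{\mathring A}$ charges no polar set), so by \eqref{eq:EqualityCapacityDirichlet} and bilinearity
\[
\cE(h_A-h_{\mathring A},\,h_A-h_{\mathring A})=\capa(A)+\capa(\mathring A)-2\capa(\mathring A)=\capa(A)-\capa(\mathring A)=0.
\]
As $h_A-h_{\mathring A}$ lies in the extended Dirichlet space of $(\cE,H^1(\bbR^d))$ and vanishes at infinity ($d\ge3$, $A$ bounded), it is identically $0$; in particular $\langle\cH^\alpha_A,\eta\rangle=\langle\cH^\alpha_{\mathring A},\eta\rangle$, so the event in the numerator coincides with the corresponding event written with $\cH^\alpha_{\mathring A}$.

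With this identification, Theorem~\ref{thm:pushdown} gives
\[
\limsup_{N\to\infty}\frac1{N^{d-2}}\log\bbP\big[\langle\bbX_N,\eta\rangle\ge\langle\cH^\alpha_A,\eta\rangle+\Delta;\,\cD^\alpha_N\big]\le-\frac1{2d}(\overline h-\alpha)^2\capa(\mathring A)-\frac{\Delta^2}{2d}\frac1{E(\eta)},
\]
while \eqref{eq:DiscLowerBound} together with \eqref{eq:Equality_crit_par} and $\capa(A)=\capa(\mathring A)$ yields
\[
\liminf_{N\to\infty}\frac1{N^{d-2}}\log\bbP[\cD^\alpha_N]\ge-\frac1{2d}(h_{\ast\ast}-\alpha)^2\capa(A)=-\frac1{2d}(\overline h-\alpha)^2\capa(\mathring A).
\]
Subtracting, the two capacity terms cancel and one is left with $-\tfrac{\Delta^2}{2d}\tfrac1{E(\eta)}$, which is the claim; the degenerate case $\eta\equiv0$ is trivial (the event is empty since $\Delta>0$) and consistent with the stated convention. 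Apart from the potential-theoretic identification in the previous paragraph, everything here is bookkeeping.
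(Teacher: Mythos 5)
Your proposal is correct and follows essentially the same route as the paper: the conditional probability is handled by the $\limsup$–$\liminf$ splitting, Theorem~\ref{thm:pushdown} and \eqref{eq:DiscLowerBound} are combined through \eqref{eq:Equality_crit_par}, and the substantive point $h_A=h_{\mathring A}$ Lebesgue-a.e.\ is settled by the same Dirichlet-form computation $\cE(h_A-h_{\mathring A},h_A-h_{\mathring A})\le\capa(A)-\capa(\mathring A)=0$ (your justification of $\cE(h_A,h_{\mathring A})=\capa(\mathring A)$ via the equilibrium measure just spells out what the paper leaves implicit).
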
 
\begin{proof}
It holds that
\begin{equation}
  \begin{aligned}
    \varlimsup_{N \rightarrow \infty} &\frac{1}{N^{d-2}}  \log \bbP\left[\langle \bbX_N,\eta \rangle \geq \langle \cH^\alpha_A,\eta \rangle + \Delta| \cD^\alpha_N \right]  \\ &\leq \varlimsup_{N \rightarrow \infty} \frac{1}{N^{d-2}} \log \bbP\left[\langle \bbX_N,\eta \rangle \geq \langle \cH^\alpha_A,\eta \rangle + \Delta; \cD^\alpha_N \right]  - \varliminf_{N \rightarrow \infty}  \frac{1}{N^{d-2}}\log \bbP[\cD^\alpha_N].    
  \end{aligned}
\end{equation}
The result now follows directly by the equality of the critical parameters~\eqref{eq:Equality_crit_par}, see~\cite{duminil2019equality}, combining~\eqref{eq:thm31} with the lower bound~\eqref{eq:DiscLowerBound} and noting that  $\capa(A) = \capa(\mathring{A})$ implies $h_A = h_{\mathring{A}}$ Lebesgue-a.e. since
\begin{equation}
\begin{split}
  \cE(h_A - h_{\mathring{A}},h_A - h_{\mathring{A}}) & = \capa(A) + \capa(\mathring{A}) - 2\cE(h_A,h_{\mathring{A}}) \\
  & \leq \capa(A) - \capa(\mathring{A}) = 0. \qedhere
  \end{split}
\end{equation}
\end{proof}

In order to prove Theorem \ref{thm:pushdown}, we shall devise a coarse-graining procedure similar to the one developed in Section 3 of~\cite{nitzschner2018entropic} (see also Section 4 of~\cite{nitzschner2017solidification}). Let us therefore introduce further notation and recall the construction in the above references. For the convenience of the reader, we will reproduce here the main steps in some detail, and cite the references for further explanations. We stress that the coarse-graining performed below is going to be used also in the proof of Theorem~\ref{thm:MainUpperBound}. 
\vspace{\baselineskip} 

Let $\delta<\gamma<\overline{h}$, and select a sequence $(\gamma_N)_{N \geq 1}$ of numbers in $(0,1]$ fulfilling the conditions (4.18) of~\cite{nitzschner2017solidification}, in particular,
\begin{equation}\label{eq:gamma_N}
\gamma_N \rightarrow 0,\quad \gamma_N^{\frac{d+1}{2}}/(N^{2-d}\log N) \to \infty,\quad \text{as }N \rightarrow \infty.
\end{equation}
Moreover, define the two scales
\begin{equation}
\label{eq:ScalesDefinition}
L_0 = \Big\lfloor (\gamma_N^{-1} N \log N)^{\tfrac{1}{d-1}} \Big\rfloor, \qquad \widehat{L}_0 = 100 d \left\lfloor \sqrt{\gamma_N} N \right\rfloor,
\end{equation}
as well as the lattices
\begin{equation}
\bbL_0  = L_0 \bbZ^d,  \qquad \widehat{\bbL}_0 = \tfrac{1}{100d} \widehat{L}_0\bbZ^d = \left\lfloor \sqrt{\gamma_N} N \right\rfloor \bbZ^d.
\end{equation}
Furthermore, we introduce for $K \geq 100$ and $z \in \bbL_0$ the boxes
\begin{equation}
\label{eq:BoxesDefinition}
\begin{split}
B_z &= z + [0,L_0)^d \cap \bbZ^d \subseteq D_z = z + [-3L_0,4L_0)^d \cap \bbZ^d \\
& \subseteq U_z = z + [-KL_0+1,KL_0-1)^d \cap \bbZ^d.
\end{split}\end{equation}
The collection of boxes $U_z$, $z \in \bbL_0$ is used to decompose the Gaussian free field according to \eqref{eq:HarmonicAverage} and \eqref{eq:LocalField}. In particular, we write $\varphi = \psi^z + h^z$ where $h^z = h^{U_z}$ and $\psi^z = \psi^{U_z}$.  

For $\delta < \gamma$ in $(\alpha,\overline{h})$, there is a notion of a box $B_z$, $z \in \bbL_0$ being $\psi$-good at levels $\delta < \gamma$, see (5.7), (5.8) of \cite{sznitman2015disconnection}, which in essence means that the set $B_z \cap \lbrace \psi^z \geq \gamma \rbrace$ contains a connected component of $|\cdot |_\infty$-diameter at least $L_0/10$, and for any neighboring box $B_{z'}$, any two connected components of $B_z \cap \lbrace \psi^z \geq \gamma \rbrace$ and $B_{z'} \cap \lbrace \psi^{z'} \geq \gamma \rbrace$ with $|\cdot |_\infty$-diameter at least $L_0/10$ are connected in $D_z \cap \lbrace \psi^z \geq \delta \rbrace$.  A box that is not $\psi$-good at levels $\delta < \gamma$, is called $\psi$-bad at these levels. 

We will also need the notion of a box $B_z$, $z \in \bbL_0$ being $h$-good at level $a > 0$, which means that $\inf_{D_z} h^z > - a$ (for us $a = \delta-\alpha>0$ will be a natural choice, and eventually, we will send $\delta$ and $\gamma$ to $\overline{h}$). Again, a box that is not $h$-good at a level $a$ is called $h$-bad at this level. \vspace{\baselineskip}

\paragraph{\emph{Outline of the proof}} The proof of Theorem \ref{thm:pushdown} will be given in a multi-step procedure. We will give a detailed outline and explain in an informal fashion some results from~\cite{sznitman2015disconnection} and~\cite{nitzschner2018entropic} that enter the proof. 
\vspace{0.3\baselineskip}

\noindent \textbf{1.} \emph{Effective disconnection event}:

For large $N$ on the disconnection event $\cD^\alpha_N$, we aim at extracting an interface of `blocking' $L_0$-boxes, all either $h$-bad at level $a = \delta - \alpha$ or $\psi$-bad at levels $\delta < \gamma$, located between $A_N$ and the complement of $B(0,(M+1)N)$. Informally, such an interface exists since a sequence of neighboring boxes, that are both $\psi$-good and $h$-good and that connect $A_N$ to the complement of $B(0,(M+1)N)$, would provide a path in $E^{\geq \alpha}$ joining $A_N$ and $S_N$ (see Figure~\ref{fig:path} below).
  \begin{figure}[h]
  \centering
  \includegraphics[width=0.8\textwidth]{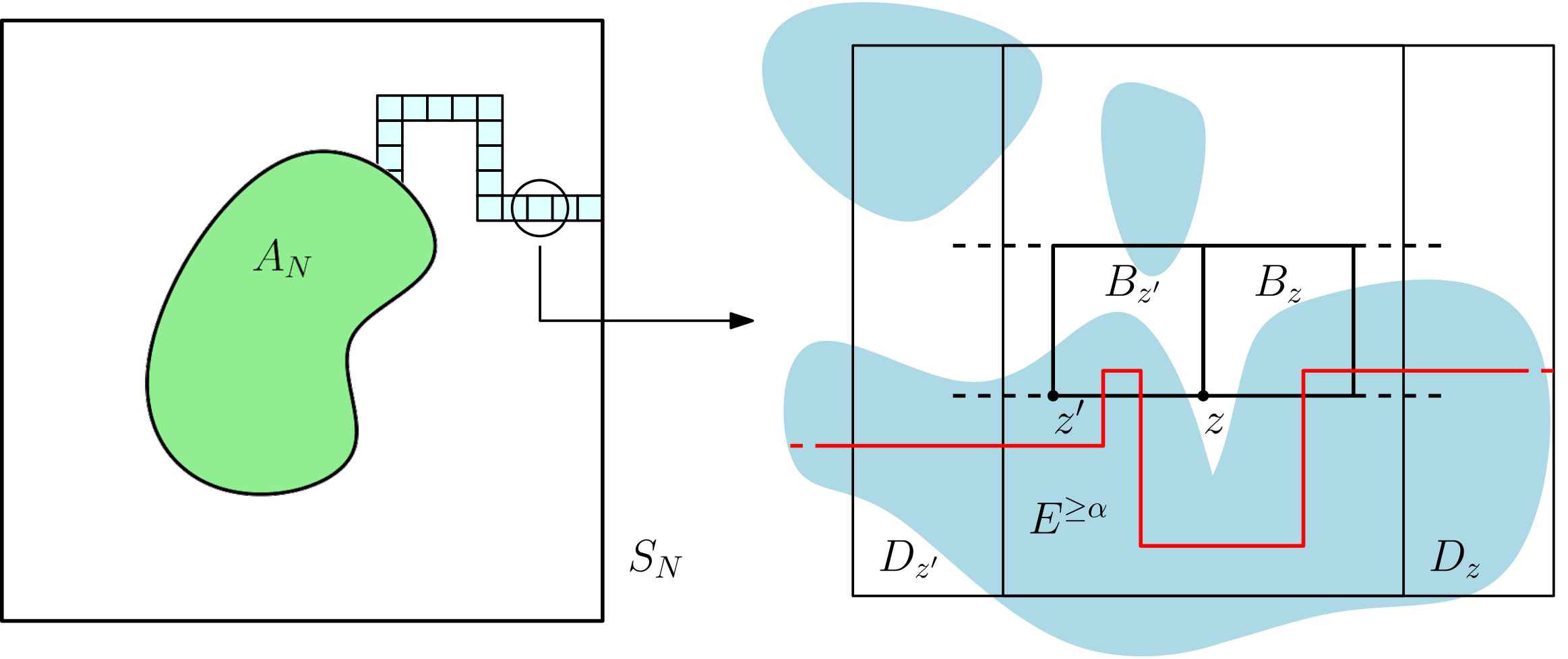}
  \caption{Informal picture for a realization of $(\cD^\alpha_N)^c$: If two neighboring boxes $B_z$ and $B_{z'}$ with $|z - z'| = L_0$ are both $\psi$-good at levels $\delta < \gamma$ and $h$-good at level $a$, one can find a path in $E^{\geq \delta - a} \cap D_z$ starting in $B_z$ and ending in $B_{z'}$ (where $\delta - a = \alpha$). A sequence of such neighboring boxes, connecting $A_N$ to the complement of $B(0,(M+1)N)$, provides a path in $E^{\geq \alpha}$ joining $A_N$ and $S_N$ (on the left-hand side).} 
  \label{fig:path}
 \end{figure}

However, by the independence of the local fields associated to disjoint boxes $U_z$, $U_{z'}$ (for $|z - z'|_\infty$ large enough), one can essentially rule out (using standard bounds on sums of independent Bernoulli random variables, see Section 5 of~\cite{sznitman2015disconnection}) that many boxes in the blocking interface are $\psi$-bad at levels $\delta < \gamma$. 

More precisely, one can define a `bad' event $\cB_N$ (corresponding to the existence of many $\psi$-bad boxes at levels $\delta < \gamma$ within a box $B(0,10(M+1)N)$) having negligible probability for our purposes, see~\eqref{eq:BadEvent} and~\eqref{eq:SuperExponentialBound} and work on the \textit{effective disconnection event} $ \widetilde{\cD}^\alpha_N = \cD^\alpha_N \setminus \cB_N$ henceforth, essentially only keeping track of $h$-bad boxes. 
\vspace{0.3\baselineskip}

\noindent \textbf{2.} \emph{Coarse Graining}:

The position of these $h$-bad boxes is encoded in a set $\cK_N$ of values of a random variable $\kappa_N$ defined on $\widetilde{\cD}^\alpha_N$ giving rise to the coarse-graining of the event $\widetilde{\cD}^\alpha_N$ into sub-events $\cD_{N,\kappa}$, $\kappa \in \cK_N$, see~\eqref{eq:kappaN_Definition} and \eqref{eq:coarseGraining} below. 
This coarse-graining, which we take from~\cite{nitzschner2018entropic} (see also~\cite{nitzschner2017solidification}) is of low combinatorial complexity $\exp\{o(N^{d-2}) \}$, and thus allows us to reduce the problem of finding an asymptotic large deviation upper bound on the probability of the event under the probability in \eqref{eq:thm31} to finding an upper bound on the probability of the event $\cD_{N,\kappa} \cap \{ \langle \bbX_N,\eta \rangle \geq \langle  
\cH^\alpha_{\mathring{A}},\eta \rangle + \Delta \}$, uniformly in $\kappa \in \cK_N$, see~\eqref{eq:CoarseGrainingBound}. 

The coarse-graining procedure can be described informally as follows: We use boxes of the intermediate scale $\widehat{L}_0$ (see~\eqref{eq:ScalesDefinition}) and select a region where the `interface' of $h$-bad $L_0$-boxes has a non-degenerate density within those larger $\widehat{L}_0$-boxes. 
This selection is recorded in a set $\widehat{\cS}_N$ of points of $\widehat{\bbL}_0$.  A scaled $\bbR^d$-filling of these $\widehat{L}_0$-boxes is used to define a set $U_0$  (see \eqref{eq:UnionBound}) that acts as segmentation of the porous interface we construct.
Within a sparse subset of these $\widehat{L}_0$-boxes (corresponding to $\widetilde{\cS}_N\subseteq \widehat{\cS}_N$), one can then extract a `porous interface' with substantial presence of $h$-bad $L_0$-boxes and mutual distance bigger or equal to $4KL_0$ ($C$ is the set of all selected boxes). 
We refer to~\eqref{eq:UnionBound} for the concrete definition of the sets in question. The picture is illustrated in Figure~\ref{fig:CG} below. 
\begin{figure}[h]\label{fig:CG}
  \centering
  \includegraphics[width=0.8\textwidth]{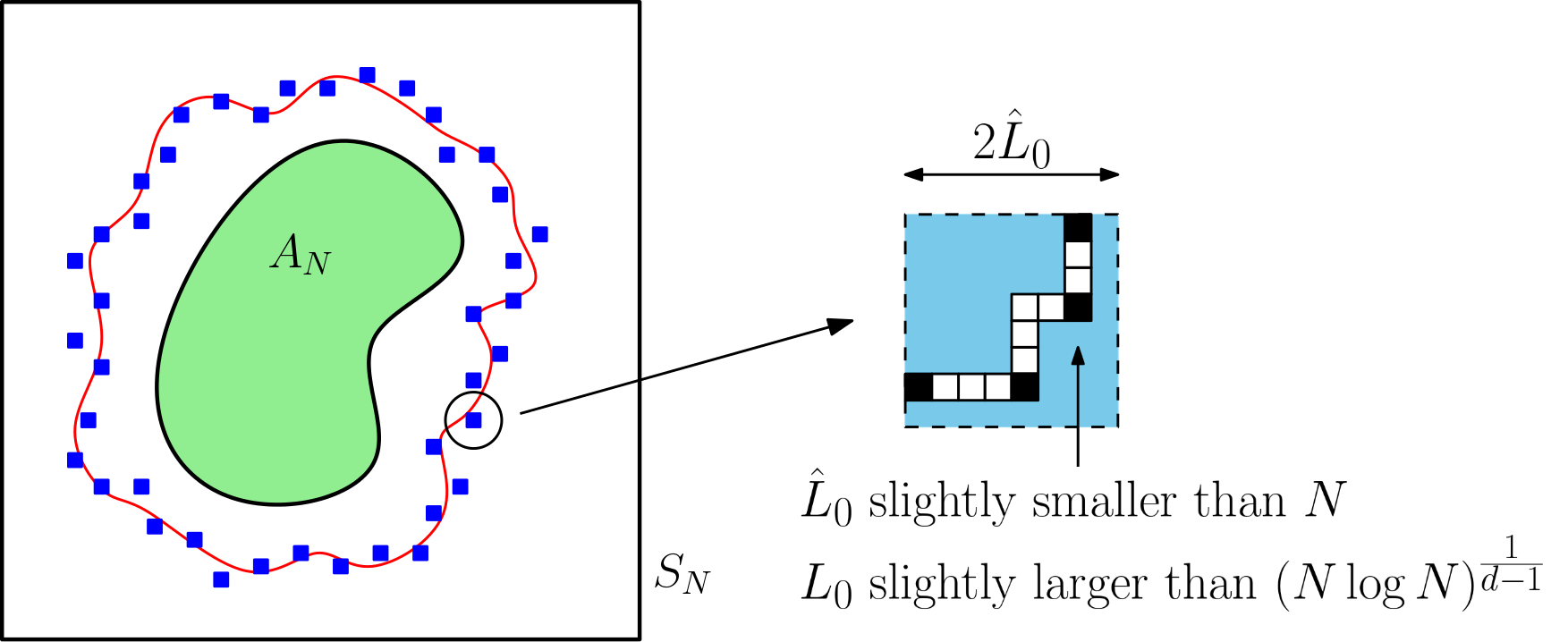}
  \caption{Informal description of the geometric picture corresponding to a choice of $\kappa \in \cK_N$: Within each of the selected boxes of size $2\widehat{L}_0$ (on the left-hand side), a non-degenerate number of boxes of size $L_0$ is selected (in black, in the right-hand side).}
 \end{figure}
\vspace{0.3\baselineskip}

\noindent \textbf{3.} \emph{Solidification estimates}:

In this step, we provide the solidification estimates for the `porous interfaces' which are associated to $\kappa \in \cK_N$. We rely here on the fact that the scaled $\bbR^d$-filling $\Sigma$ of the set of blocking `bad' boxes $C$ associated to any $\kappa \in \cK_N$ can be treated as a porous deformation of the boundary of the `segmentation', $ \partial U_0$. The application of solidification estimates is twofold:
\begin{itemize}
\item We use the pointwise solidification result, Lemma~\ref{prop:harmonic_bound} in order to bound from below the macroscopic averages of the functions $x \mapsto P_x[H_C < \infty]$ by the macroscopic averages of $h_{\mathring{A}}$, uniformly in $\kappa \in \cK_N$, see~\eqref{eq:pointwisesolid}. 
\item We also argue that the capacity lower bound \eqref{eq:SolidificationEstimate} can be applied to the scaled $\mathbb{R}^d$-filling $\Sigma$ of the collection of $L_0$-boxes $C$, giving the uniform asymptotic lower bound~\eqref{eq:capacitysol}.
\end{itemize}
\vspace{0.3\baselineskip}

\noindent \textbf{4.} \emph{Borell--TIS inequality}:

In this step, we develop the pivotal Proposition \ref{prop:3.3}, which gives a uniform bound on the probability that in the collection $C$ of $L_0$-boxes (associated to $\kappa \in \cK_N$) all are $h$-bad (at level $s$) \textit{and} $\langle \bbX_N, \eta \rangle \geq - s\langle \eta, h_{\mathring{A}}\rangle + \Delta$, where $s > 0$.  
This result, which extends Corollary 4.4 of~\cite{sznitman2015disconnection} and Lemma 4.2 in~\cite{nitzschner2018entropic}, is based on the Borell--TIS inequality for a certain Gaussian functional $\widehat{Z}_f$ and brings into play the capacity of the set $C$ of $h$-bad boxes. 

In the construction of $\widehat{Z}_f$, see~\eqref{eq:DefZ_hat}, we introduce a parameter $\beta > 0$ that informally interpolates between a Gaussian functional $Z_f$ which captures the effect of the $h$-bad boxes on one hand and the macroscopic average $\langle \bbX_N, \eta \rangle$ on the other hand. Later, we optimize in this parameter in order to obtain the \textit{additional cost} that comes with enforcing an absence of the entropic push present under disconnection. Roughly speaking, the application of the Borell--TIS inequality naturally brings into play the variance of $\widehat{Z}_f$, which has three parts (see~\eqref{eq:expansion_variance}):
\begin{itemize}
\item the variance of $Z_f$, which can be bounded by relying on the fact that the boxes in $C$ have a distance bigger or equal to $4KL_0$, see~\eqref{eq:UpperBoundZf}.
\item the variance of $\langle \bbX_N, \eta \rangle$, whose asymptotics as $N \rightarrow \infty$ can be calculated explicitly, see~\eqref{eq:HNbound}. 
\item a covariance term between $Z_f$ and $\langle \bbX_N, \eta \rangle$, which measures the `interaction of the macroscopic average' of the Gaussian free field, with the potential generated by the interface $C$, see~\eqref{eq:EstimationGN} and below. To find an adequate lower bound, we rely on the fact that $C$ has a `small volume' by construction, to exclude boundary effects, but has a significant presence around $A_N$, for which the pointwise solidification result of the previous step is crucial.\end{itemize}
 With Proposition~ \ref{prop:3.3} at hand, we finish the rest of the proof by using the capacity lower bounds for the sets $C$ associated to $\kappa \in \cK_N$ and optimizing in $\beta > 0$.

\vspace{0.3\baselineskip}

\begin{proof}[Proof of Theorem \ref{thm:pushdown}] As anticipated in the outline above we will perform a four-step procedure to derive our result.

\vspace{\baselineskip}
\noindent\textbf{Step 1: Effective disconnection event.}
In what follows we will assume
\begin{equation}
  \alpha + a = \delta(<\gamma<\overline{h}).
\end{equation}
 We proceed as in \cite{nitzschner2018entropic}, Section 3 and define the random subset 
\begin{equation}
\cU^1 = \ \begin{minipage}{0.8\linewidth}
  the union of all $L_0$-boxes $B_z$ that are either contained in\\  $B(0,(M+1)N)^c$ or connected to an $L_0$-box in $B(0,(M+1)N)^c$ by
a path of $L_0$-boxes $B_{z_i}$, $0 \leq i \leq n$, all (except possibly the last one) $\psi$-good at levels $\delta < \gamma$ and $h$-good at level $a = \delta - \alpha$.
\end{minipage}
\end{equation}
One introduces the function
\begin{equation}
\widehat{\sigma}(x) = |\cU^1 \cap B(x,\widehat{L}_0) | / |B(x,\widehat{L}_0)|, \quad x \in \bbZ^d,
\end{equation}
which tracks the presence of $\cU^1$ within boxes $B(x,\widehat{L}_0)$, and the set $\widehat{\cS}_N$, that provides a `segmentation' of the interface of blocking $L_0$-boxes, namely 
\begin{equation}
\widehat{\cS}_N = \left\{x \in \widehat{\bbL}_0; \widehat{\sigma}(x) \in \left[ \tfrac{1}{4}, \tfrac{3}{4} \right] \right\}.
\end{equation} 
One then extracts from $\widehat{\cS}_N$ another random subset $\widetilde{\cS}_N$ such that
\begin{equation} 
\begin{minipage}{0.8\linewidth}$\widetilde{\cS}_N$ is a maximal subset of $\widehat{\cS}_N$ with the property that the $B(x,2\widehat{L}_0)$, $x \in \widetilde{\cS}_N$, are pairwise disjoint.
\end{minipage}
\end{equation}
(Informally, the boxes in the left hand side of Figure~\ref{fig:CG} correspond to $B(x,\widehat{L}_0)$, with $x\in\widetilde{\cS}_N$).
We also recall the `bad' event $\cB_N$ from (3.20) of \cite{nitzschner2018entropic}, which is defined as 
\begin{equation}
\label{eq:BadEvent}
\cB_N = \bigcup_{e \in \{e_1,...,e_d \} } \left\{ \begin{minipage}{0.6\textwidth}
  there are at least $\rho(L_0)(N_{L_0}/L_0)^{d-1}$ columns of $L_0$-boxes in the direction $e$ in $B(0,10(M+1)N)$ that contain a $\psi$-bad $L_0$-box at levels $\delta < \gamma$
\end{minipage}\right\},
\end{equation}
where $\rho(L)$ a suitable function depending on $\gamma, \delta$ and $K$ tending to $0$ as $L \rightarrow \infty$, $N_{L_0} = L_0^{d-1}/\log L_0$, and $\{e_1,...,e_d \}$ is the canonical basis of $\bbR^d$. It can be argued that one has the super-exponential bound
\begin{equation}
\label{eq:SuperExponentialBound}
\lim_{N \rightarrow \infty} \frac{1}{N^{d-2}} \log \bbP[\cB_N] = -\infty,
\end{equation}
see Proposition 5.2 of \cite{sznitman2015disconnection} and the proof of Lemma 4.2 of \cite{nitzschner2017solidification}. Therefore, the event $\cB_N$ is irrelevant at the order we are interested in. This allows us to introduce the \emph{effective} disconnection event 
\begin{equation}
\widetilde{\cD}^\alpha_N = \cD^\alpha_N \setminus \cB_N.
\end{equation}

\noindent\textbf{Step 2: Coarse graining.}
We set $\overline{K} = 4K$ and as in (4.39)--(4.41) of \cite{nitzschner2017solidification} (see also below (3.20) in \cite{nitzschner2018entropic}), we have that
\begin{equation}\label{effectiveevent}
  \begin{minipage}{0.85\textwidth}
    for large $N$, on $\widetilde{\cD}^\alpha_N$, for each $x \in \widetilde{\cS}_N$, one can find a collection $\widetilde{\cC}_x$ of points in $\bbL_0$ and $\widetilde{i}_x \in \{ 1,...,d \}$, such that the $L_0$-boxes $B_z$, $z\in \widetilde{\cC}_x$, intersect $B(x,\widehat{L}_0)$ and have $\widetilde{\pi}_x$-projection at mutual distance at least $\overline{K}L_0$, where $\widetilde{\pi}_x$ is the orthogonal projection on the set of points in $\bbZ^d$ with vanishing $\widetilde{i}_x$-coordinate. Moreover, $\widetilde{\cC}_x$ has cardinality $\big\lfloor\big( \tfrac{c'}{K} \tfrac{\widehat{L}_0}{L_0} \big)^{d-1} \big\rfloor$ and for each $z \in \widetilde{\cC}_x$, $B_z$ is $\psi$-good at level $\gamma < \delta$ and $h$-bad at level $a = \delta- \alpha$.\end{minipage}
\end{equation}
We remark that the existence of $\widetilde{\pi}_x$ and $\widetilde{\cC}_x$ for each $x \in \widetilde{\cS}_N$ on $\widetilde{\cD}^\alpha_N$ follows from the isoperimetric controls (A.3) -- (A.6), p. 480--481 of~\cite{deuschel1996surface}. Note that we choose here $\overline{K} = 4K$ (instead of $\overline{K} = 2K + 3$ as in \cite{nitzschner2017solidification} and \cite{nitzschner2018entropic}), since it will be necessary in the proof of the following proposition that the boxes $U_z$, $z \in \widetilde{\cC}_x$, $x \in \widetilde{\cS}_N$ are all at a large enough distance. This modification is only minor and does not change the validity of the coarse graining procedure. 

We now introduce the random variable $\kappa_N$ defined on $\widetilde{\cD}^\alpha_N$ with range $\cK_N$, 
\begin{equation}
\label{eq:kappaN_Definition}
\kappa_N = \big(\widehat{\cS}_N, \widetilde{\cS}_N, (\widetilde{\pi}_x, \widetilde{\cC}_x)_{x \in \widetilde{\cS}_N} \big),
\end{equation}
see below (4.41) of \cite{nitzschner2017solidification} or (3.19) of \cite{nitzschner2018entropic}. Moreover, we have based on a counting argument involving the choice of the scales $\widehat{L}_0$ and $L_0$ together with \eqref{effectiveevent}, that
\begin{equation}
\label{eq:smallCombComplexity}
|\cK_N| = \exp\{ o(N^{d-2}) \},
\end{equation} 
cf. (4.43) of \cite{nitzschner2017solidification}, which is the `small combinatorial complexity' we need. 
For later use, we define the following sets associated to a choice of $\kappa = (\widehat{\cS},\widetilde{\cS}, (\widetilde{\pi}_x,\widetilde{\cC}_x)_{x \in \widetilde{\cS}}) \in \cK_N$:
\begin{equation}
\label{eq:UnionBound}
\begin{cases}
\cC & = \bigcup_{x \in \widetilde{\cS}} \widetilde{\cC}_x \\
C & = \bigcup_{z \in \cC} B_z \subseteq \bbZ^d \\
\Sigma & = \frac{1}{N} \bigcup_{z \in \cC} \Big (z + [0,L_0]^d \Big ) \subseteq \bbR^d \\
U_1 & = \text{ the unbounded component of }\bbR^d \setminus \tfrac{1}{N}  \bigcup_{x \in \widehat{\cS}} B_\infty\Big(x, \tfrac{1}{50d} \widehat{L}_0 \Big)  \\
U_0 & = \bbR^d \setminus U_1. 
\end{cases}
\end{equation}
Essentially, $\Sigma$ and $U_0$ will later play the role of a `segmentation' and `porous interface' in the sense of \eqref{eq:SegmentationClass} and \eqref{eq:ClassofporousInterf} (with the choice $\varepsilon = 10 \tfrac{\widehat{L}_0}{N}$).
With this preparation, one has the coarse-graining
\begin{equation}\label{eq:coarseGraining}
\widetilde{\cD}^\alpha_N = \bigcup_{\kappa \in \cK_N} \cD_{N,\kappa}, \ \text{ where } \cD_{N,\kappa} = \widetilde{\cD}^\alpha_N \cap \{ \kappa_N = \kappa \}.
\end{equation}
What is crucial is that on the event $\cD_{N,\kappa}$, in view of~\eqref{effectiveevent}, all $B_z$ with $z\in \cC$ are $h$-bad at level $a$ and at mutual distance $\geq \overline{K}L_0$, for large $N$. In particular for $\alpha + a = \delta$,
\begin{equation}\label{eq:badboxes}
  \cD_{N,\kappa} \subseteq \bigcap_{z \in \cC} \{\inf_{D_z} h^z \leq -a \}.
\end{equation}
Thus, applying a union bound and using the super-exponential bound \eqref{eq:SuperExponentialBound}, one finds that the left-hand side of \eqref{eq:thm31} can be bounded as follows:
\begin{equation}
    \label{eq:CoarseGrainingBound}
    \limsup_{N \rightarrow \infty} \frac{1}{N^{d-2}} \log \bbP[\cA^{\overline{h}-\alpha,\Delta}_N \cap \cD^\alpha_N] \leq \limsup_{N \rightarrow \infty} \sup_{\kappa \in \cK_N} \frac{1}{N^{d-2}} \log \bbP[\cA^{\overline{h}-\alpha,\Delta}_N \cap \cD_{N,\kappa}],
\end{equation}
where we have used the notation
\begin{equation}
\label{eq:DefAsDelta_N}
\cA^{s,\Delta'}_N = \big\{\langle  \bbX_N,\eta \rangle \geq -s \langle  h_{\mathring{A}},\eta \rangle + \Delta'\big\}, \quad \text{for } s, \Delta' \in \bbR.  
\end{equation}

We will provide an asymptotic upper bound on the probability on the right hand side of~\eqref{eq:CoarseGrainingBound} in Step 4. This bound will bring into play the capacity of the set $C$ of $h$-bad boxes attached to $\kappa\in \cK_N$, which we study in the next step.

\vspace{\baselineskip}
\noindent\textbf{Step 3: Solidification estimates.} In this step we derive some bounds on the volume and capacity of the interface $C$ attached to $\kappa\in \cK_N$.  
We will show that, in the limit as $N,K\to \infty$, their (scaled) $\mathbb{R}^d$-fillings act as blocking interfaces for the set $A$. Despite the fact that they have small volume compared to $A_N$, we will show that their capacity is of the same order as the capacity of $A_N$.

We start by studying the capacity and macroscopic averages of the discrete harmonic potential of $C$ attached to $\kappa\in \cK_N$. Recall the set $\Sigma$ for any $\kappa\in \cK_N$ from~\eqref{eq:UnionBound}, also for $\kappa \in \cK_N$ set
\begin{equation}
\widetilde{\Gamma} = \bigcup_{z \in \cC} (z + [\tfrac{L_0}{4}, \tfrac{3L_0}{4}]),\qquad \widetilde{\Sigma} = \frac{1}{N}\widetilde{\Gamma} ( \subseteq \Sigma).
\end{equation}
Let $\eta : \mathbb{R}^d \rightarrow [0,\infty)$ be a continuous, compactly supported function. We can employ the strong coupling result~\eqref{eq:limsup_sc} in Proposition~\ref{prop:strong_coupling} and Proposition A.1 of~\cite{nitzschner2017solidification} respectively to infer the lower bounds
\begin{align}\label{eq:LiminfBound} \varliminf_{N \rightarrow \infty}  \inf_{\kappa \in \cK_N} \frac{1}{N^d} \sum_{x \in \bbZ^d} \eta\left(\tfrac{x}{N} \right)P_x[H_C < \infty] &\geq \varliminf_{N \rightarrow \infty} \inf_{\kappa \in \cK_N}\frac{1}{N^d} \sum_{x \in \bbZ^d} \eta\left( \tfrac{x}{N} \right)W_x[H_{\widetilde{\Gamma}} < \infty]\\
  & = \varliminf_{N \rightarrow \infty} \inf_{\kappa \in \cK_N}\frac{1}{N^d} \sum_{x \in \bbZ^d} \eta\left( \tfrac{x}{N} \right)W_{\tfrac{x}{N}}[H_{\widetilde{\Sigma}} < \infty], \notag
  \\ \label{eq:liminf}
      \varliminf_{K\to\infty}\varliminf_{N \rightarrow \infty} \inf_{\kappa \in \cK_N} \, \frac{d}{N^{d-2}} \capa_{\bbZ^d}(C) & \geq\varliminf_{K\to\infty}\varliminf_{N \rightarrow \infty} \inf_{\kappa \in \cK_N} \capa(\Sigma).
\end{align}
Next, we want to apply Lemma~\ref{prop:harmonic_bound} and the capacity lower bound~\eqref{eq:SolidificationEstimate} to the interfaces $\widetilde{\Sigma}$ and $\Sigma$ respectively. To this end, we consider a compact set $A' \subseteq \mathring{A}$ and some $\ell^\ast \geq 0$ (depending on $A,A'$), such that for large $N$ and all $\kappa \in \cK_N$, $d(A',U_1)\geq 2^{-\ell*}$ (recall the definition of $U_1$ from \eqref{eq:UnionBound}). In particular $W_x[H_{\widetilde{\Sigma}} < \tau_{10\widehat{L}_0/N}] \geq c(K)$ for all $x \in \partial U_0$ (we refer to (4.48)--(4.54) in~\cite{nitzschner2017solidification} for details of this calculation). Thus, $\widetilde{\Sigma}$ and even more so $\Sigma$ are `porous interfaces' for $A'$ for large $N$, meaning that ${\widetilde{\Sigma}},\Sigma \in \cS_{U_0, 10\widehat{L}_0/N, c(K)}$ for $U_0$ chosen as above. This allows us to apply Lemma~\ref{prop:harmonic_bound} in~\eqref{eq:LiminfBound} and the capacity lower bound~\eqref{eq:SolidificationEstimate} in~\eqref{eq:liminf} to obtain
\begin{align}\label{eq:pointwisesolid1} \varliminf_{N \rightarrow \infty}  \inf_{\kappa \in \cK_N} \frac{1}{N^d} \sum_{x \in \bbZ^d} \eta(\tfrac{x}{N})P_x[H_C < \infty] &\geq \varliminf_{N\to\infty} \frac{1}{N^d} \sum_{x\in\bbZ^d} \eta(\tfrac{x}{N}) h_{A'}(\tfrac{x}{N}) = \langle h_{A'},\eta\rangle, \\ \label{eq:capacitysol1}
  \varliminf_{K\to\infty}\varliminf_{N \rightarrow \infty} \inf_{\kappa \in \cK_N} \, \frac{d}{N^{d-2}} \capa_{\bbZ^d}(C) & \geq \capa(A').
\end{align}
In the last step of~\eqref{eq:pointwisesolid}, we used that the set of points where $h_{A'}$ is not continuous has zero Lebesgue measure. From~\eqref{eq:pointwisesolid1} and~\eqref{eq:capacitysol1}, it follows by taking $A' \uparrow \mathring{A}$ that
\begin{align}\label{eq:pointwisesolid} \varliminf_{N \rightarrow \infty}  \inf_{\kappa \in \cK_N} \frac{1}{N^d} \sum_{x \in \bbZ^d} \eta(\tfrac{x}{N})P_x[H_C < \infty] &\geq \varliminf_{N\to\infty} \frac{1}{N^d} \sum_{x\in\bbZ^d} \eta(\tfrac{x}{N}) h_{\mathring{A}}(\tfrac{x}{N}) = \langle h_{\mathring{A}},\eta\rangle, \\ \label{eq:capacitysol}
  \varliminf_{K\to\infty}\varliminf_{N \rightarrow \infty} \inf_{\kappa \in \cK_N} \, \frac{d}{N^{d-2}} \capa_{\bbZ^d}(C) & \geq \capa(\mathring{A}),
\end{align}
using~\eqref{eq:hAconvergence} and Proposition 1.13, p.60 of~\cite{port2012brownian}, respectively.

We proceed by showing that, uniformly on $\kappa\in \cK_N$, the volume of $C$ is much smaller than $N^d$. To do so we first observe that
\begin{equation}\label{eq:estimate_numpoints}
|\cC| \leq \sum_{x\in \widetilde{\cS}} |\widetilde{\cC}_x| \leq c  \bigg(\frac{N}{\widehat{L}_0}\bigg) \bigg(\frac{\widehat{L}_0}{L_0}\bigg)^{d-1} \stackrel{\eqref{eq:ScalesDefinition}}{\leq} c''\sqrt{\gamma_N}\frac{N^{d-2}}{\log N},
\end{equation}
which implies, recalling~\eqref{eq:gamma_N} and~\eqref{eq:ScalesDefinition}, 
\begin{equation}
\label{eq:smallVolumeOfC}
 \sup_{\kappa\in \cK_N} \frac{|C|}{N^d} \leq c'' \frac{\sqrt{\gamma_N} L_0^d}{N^2 \log N} \leq c''' \bigg(\gamma_N^{-\frac{d+1}{2}} N^{2-d}\log N \bigg)^{\frac{1}{d-1}} \to 0\quad \text{as $N\to\infty$}.
\end{equation}
As a further consequence of~\eqref{eq:estimate_numpoints}
together with~\eqref{eq:capacitysol} and~\eqref{eq:gamma_N} one has for $K$ large enough
\begin{equation}\label{eq:smallerror}
  \lim_{N\to\infty}\sup_{\kappa\in \cK_N}\frac{|\cC|}{\capa_{\bbZ^d}(C)} =0.
\end{equation}

\vspace{\baselineskip}
\noindent\textbf{Step 4: Borell--TIS inequality.}
In this step we prove the pivotal Proposition~\ref{prop:3.3}.
Roughly speaking, we will derive an asymptotic upper bound on the probability that in an interface $C$, consisting of $L_0$-boxes, all the boxes are $h$-bad at level $s$ and simultaneously the macroscopic averages of the Gaussian free field are above macroscopic averages of the function $-sh_{\mathring{A}}$. More precisely, we will show:
\begin{prop}\label{prop:3.3} Let $\Delta, s, \beta>0$, $\eta : \bbR^d \rightarrow [0,\infty)$ a continuous compactly supported function. Then there exists a function $\widetilde{\alpha}(\cdot)$ (possibly dependent on $\beta$ and $\eta$) with $\lim_K \widetilde{\alpha}(K) = 1$ such that for large enough $K$, one has
  \begin{equation}
  \begin{split}
  \label{eq:Prop3_3_claim}
  \limsup_{N \rightarrow \infty} & \sup_{\kappa \in \cK_N} \frac{1}{N^{d-2}}\log  \bbP\bigg[ \cA^{s,\Delta}_N \cap \bigcap_{z \in \cC} \{\inf_{D_z} h^z \leq -s \} \bigg] \\
  \leq 
  & -  \frac{1}{2}( s + \beta \Delta)^2 \varliminf_{N\to\infty} \inf_{\kappa\in \cK_N} \frac{\capa_{\bbZ^d}(C) N^{2-d}}{\widetilde{\alpha}(K) + \beta^2 d\, \capa_{\bbZ^d}(C)N^{2-d}  E(\eta) }
  \end{split}
  \end{equation}
  (recall the definition of the event $\cA^{s,\Delta}_N$ from~\eqref{eq:DefAsDelta_N}).
  \end{prop}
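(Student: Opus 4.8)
\textbf{Proof proposal for Proposition~\ref{prop:3.3}.}

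The plan is to encode the two constraints defining the event in \eqref{eq:Prop3_3_claim} through a single centered Gaussian functional and then apply the Borell--TIS inequality. First I would set up the field $Z_f$ that detects the $h$-bad boxes: for each $z\in\cC$ there is a point $y_z\in D_z$ witnessing $\inf_{D_z}h^z\le -s$, so on the event in question $h^z_{y_z}\le -s$ simultaneously for all $z\in\cC$. Following Corollary 4.4 of~\cite{sznitman2015disconnection} and Lemma 4.2 of~\cite{nitzschner2018entropic}, one writes $h^z_{y_z}$ in terms of $\varphi$ restricted to $U_z^c$, introduces nonnegative weights $(f_z)_{z\in\cC}$ summing to one, and forms $Z_f=\sum_{z\in\cC}f_z\,\xi_z$ where $\xi_z$ is (a suitably normalized version of) $h^z_{y_z}$. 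Because the boxes $U_z$, $z\in\cC$, are pairwise at $|\cdot|_\infty$-distance $\ge\overline{K}L_0=4KL_0$ by \eqref{effectiveevent}, the cross-covariances of the $\xi_z$ are small, and choosing $f_z$ proportional to the equilibrium measure of the boxes makes $\var(Z_f)$ essentially $(\text{const})/\capa_{\bbZ^d}(C)$ up to a factor $\widetilde\alpha(K)\to 1$; this is the content of \eqref{eq:UpperBoundZf}. The event $\{\inf_{D_z}h^z\le -s\ \forall z\in\cC\}$ then forces $Z_f\le -s$ (after normalization), i.e.\ a large deviation of a centered Gaussian to the left.

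Next I would build the interpolating functional $\widehat Z_f=Z_f+\beta\,\langle\bbX_N,\eta\rangle$ (this is \eqref{eq:DefZ_hat}), which is again centered Gaussian. On the intersection $\cA^{s,\Delta}_N\cap\bigcap_{z\in\cC}\{\inf_{D_z}h^z\le -s\}$ we have $Z_f\le -s$ and $\langle\bbX_N,\eta\rangle\ge -s\langle h_{\mathring A},\eta\rangle+\Delta$, but to combine these into a clean deviation of $\widehat Z_f$ I need the covariance between $Z_f$ and $\langle\bbX_N,\eta\rangle$ to be controlled: one shows, using that $\xi_z$ is built from $\varphi$ near $D_z$ and that $\bbX_N$ is a macroscopic average, that $\mathrm{Cov}(Z_f,\langle\bbX_N,\eta\rangle)$ is asymptotically $\ge \langle h_{\mathring A},\eta\rangle\cdot(\text{same normalization as }\var Z_f)$ minus a negligible boundary contribution. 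Here the small-volume bound \eqref{eq:smallVolumeOfC}, the estimate \eqref{eq:smallerror}, and crucially the pointwise solidification lower bound \eqref{eq:pointwisesolid} enter: \eqref{eq:pointwisesolid} guarantees that $\frac1{N^d}\sum_x\eta(x/N)P_x[H_C<\infty]\ge\langle h_{\mathring A},\eta\rangle$, which is exactly the quantity appearing in that covariance after one expresses $h^z_{y_z}$-type terms through hitting probabilities of $C$. Consequently, on the event, $\widehat Z_f$ deviates from $0$ by at least $s+\beta\Delta$ in the appropriate normalized sense, after the $-s\langle h_{\mathring A},\eta\rangle$ terms cancel against the covariance contribution.

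Then I would expand $\var(\widehat Z_f)=\var(Z_f)+2\beta\,\mathrm{Cov}(Z_f,\langle\bbX_N,\eta\rangle)+\beta^2\var(\langle\bbX_N,\eta\rangle)$ (this is \eqref{eq:expansion_variance}). The first term contributes $\widetilde\alpha(K)/(d\,\capa_{\bbZ^d}(C)N^{2-d})$-type scaling after normalization; the third term, by a direct computation of the variance of the linear statistic $\langle\bbX_N,\eta\rangle$ of the Gaussian free field (see \eqref{eq:HNbound}), behaves like $N^{2-d}E(\eta)$ up to lower order, using the Green-function asymptotics \eqref{eq:AsymptoticBehaviourGreen} to identify the continuum energy $E(\eta)=\int\eta g_{BM}\eta$; the cross term is handled by Cauchy--Schwarz (or absorbed, since one only needs an upper bound on the variance and the cross term enters with a sign that, combined with how the deviation was set up, works in our favor — more precisely one bounds $\var(\widehat Z_f)$ above by $(\text{const})\cdot(\widetilde\alpha(K)+\beta^2 d\,\capa_{\bbZ^d}(C)N^{2-d}E(\eta))/(d\,\capa_{\bbZ^d}(C)N^{2-d})$). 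Feeding the deviation size $s+\beta\Delta$ and this variance bound into Borell--TIS, $\bbP[\widehat Z_f - \bbE\widehat Z_f \le -t]\le\exp\{-t^2/(2\var\widehat Z_f)\}$, and taking $\frac1{N^{d-2}}\log$, $\limsup_N$ and $\sup_{\kappa\in\cK_N}$, yields exactly the right-hand side of \eqref{eq:Prop3_3_claim}, with the $\varliminf_N\inf_\kappa$ appearing because $\capa_{\bbZ^d}(C)N^{2-d}$ sits in both numerator and denominator and the bound must be uniform over $\kappa$.

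The main obstacle I anticipate is the lower bound on the covariance term $\mathrm{Cov}(Z_f,\langle\bbX_N,\eta\rangle)$: one must show that the interaction between the macroscopic average and the potential generated by $C$ is at least $\langle h_{\mathring A},\eta\rangle$ (up to the right normalization) \emph{and} that the boundary/volume error terms are negligible uniformly in $\kappa\in\cK_N$. This is where the interplay of \eqref{eq:pointwisesolid}, \eqref{eq:smallVolumeOfC} and \eqref{eq:smallerror} is delicate — the solidification estimate provides the leading term only after identifying $P_x[H_C<\infty]$ with the relevant Gaussian correlation via the equilibrium-measure representation \eqref{eq:EquilibriumPotential}, and one has to argue that $C$ being of vanishing volume relative to $N^d$ but of capacity comparable to $\capa(\mathring A)$ means it behaves, for the purposes of this covariance, like the "solid" set $\mathring A$ rather than like a thin set. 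Choosing the weights $f_z$ correctly (equilibrium-type, so that $\sum_z f_z\xi_z$ has variance governed by $\capa_{\bbZ^d}(C)^{-1}$) and verifying the $K\to\infty$ asymptotics of $\widetilde\alpha(K)$ from the inter-box distance $\ge 4KL_0$ is the other technical point, but it parallels the cited arguments in~\cite{sznitman2015disconnection} and~\cite{nitzschner2018entropic} closely.
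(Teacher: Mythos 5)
Your overall roadmap is the paper's: a weighted Gaussian functional $Z_f=\sum_{z\in\cC}\lambda(z)h^z(f(z))$ with equilibrium weights, an interpolation with the macroscopic average governed by $\beta$, the Borell--TIS inequality, the variance bound \eqref{eq:UpperBoundZf}, the asymptotics $N^{d-2}\cH_N\to dE(\eta)$, and the covariance term controlled via the pointwise solidification bound \eqref{eq:pointwisesolid} and the small-volume estimate \eqref{eq:smallVolumeOfC}. However, there is a concrete gap in how you set up the interpolating functional. With your choice $\widehat Z_f=Z_f+\beta\langle\bbX_N,\eta\rangle$ the event $\cA^{s,\Delta}_N\cap\bigcap_{z\in\cC}\{\inf_{D_z}h^z\leq -s\}$ does \emph{not} force any one-sided deviation of $\widehat Z_f$: the constraint $Z_f\leq -s$ pushes it down while $\langle\bbX_N,\eta\rangle\geq -s\langle h_{\mathring A},\eta\rangle+\Delta$ pushes it up, so there is no analogue of the inclusion \eqref{eq:crucial}, and your appeal to a "cancellation against the covariance contribution" conflates the pathwise event inclusion (which is purely deterministic algebra) with the variance computation (where the covariance lives). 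Worse, with your sign the cross term in $\var(\widehat Z_f)$ is $+2\beta\,\cG_N$, so the solidification-based \emph{lower} bound on $\cG_N$ is useless and you would need an \emph{upper} bound on $\cG_N$, which is exactly what solidification does not give (this is the obstruction the paper discusses when motivating the splitting in Section 4). The fix is the paper's definition $\widehat Z_f=Z_f(1+\beta\langle h_{\mathring A},\eta\rangle)-\beta\langle\bbX_N,\eta\rangle$: then the event deterministically forces $\inf_{f\in\cF}\widehat Z_f\leq -s-\beta\Delta$, and in $\var(\widehat Z_f)=(1+\beta\langle h_{\mathring A},\eta\rangle)^2\var(Z_f)-2\beta(1+\beta\langle h_{\mathring A},\eta\rangle)\cG_N+\beta^2\cH_N$ the covariance enters with a negative coefficient, so the lower bound $\varliminf_N\inf_\kappa\inf_f \capa_{\bbZ^d}(C)\,\cG_N\geq\alpha_2(K)\langle h_{\mathring A},\eta\rangle$ yields the needed variance upper bound leading to the denominator $\widetilde\alpha(K)+\beta^2 d\,\capa_{\bbZ^d}(C)N^{2-d}E(\eta)$.

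A second, smaller point: the witness points $y_z$ realizing $\inf_{D_z}h^z\leq -s$ are random, so you cannot apply a Gaussian tail bound to $\widehat Z_f$ for that particular (field-dependent) $f$ as if it were a fixed centered Gaussian. The paper handles this by bounding the probability of $\{\inf_{f\in\cF}\widehat Z_f\leq -s-\beta\Delta\}$ via Borell--TIS for the infimum over the whole deterministic class $\cF$, which requires controlling $\bbE[\inf_{f\in\cF}\widehat Z_f]=\bbE[\inf_{f\in\cF}Z_f]$ through Dudley's entropy bound and the estimate $|\cC|/\capa_{\bbZ^d}(C)\to 0$ (this is where \eqref{eq:smallerror} is actually used, rather than in the covariance step where you cite it). Your sketch mentions Borell--TIS but applies it to a single $f$ and omits this expectation control, which is an essential ingredient for the stated uniformity in $\kappa\in\cK_N$ and for the factor $\widetilde\alpha(K)\to 1$.
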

  \begin{proof}

  Similarly to the proofs of Lemma 4.2 of \cite{sznitman2015disconnection} and Lemma 4.2 of \cite{nitzschner2018entropic}, we use a Gaussian field as a tool to bound the probability of the event under the probability in \eqref{eq:Prop3_3_claim}. We attach to $\kappa \in \cK_N$  a collection of functions 
  \begin{equation}\label{eq:FunctionClassDef}
  \cF = \{ f \in (\bbZ^d)^{\cC} ; f(z) \in D_z \text{ for each }z \in \cC \},
  \end{equation}
  and define for $\beta > 0$, $f \in \cF$ the random variables 
  \begin{equation}\label{eq:DefZ_hat}
  Z_f  = \sum_{z \in \cC} \lambda(z) h^z(f(z)), \qquad
  \widehat{Z}_f  = Z_f(1 + \beta\langle h_{\mathring{A}},\eta \rangle) - \beta \langle  \bbX_N,\eta \rangle,
  \end{equation}
  where we set $\lambda(z) = e_C(B_z)/\capa_{\bbZ^d}(C)$ (recall the definition of $e_C$ from \eqref{eq:EqMeasure}). The crucial observation for our purposes is that
  \begin{equation}\label{eq:crucial}
    \cA^{s,\Delta}_N \cap \bigcap_{z \in \cC} \{\inf_{D_z} h^z \leq -s \} \subseteq \Big\{\inf_{f\in \cF} \widehat{Z}_f \leq -s -\beta \Delta \Big\}.
  \end{equation}
  Note that $\widehat{Z}_f$ is a zero-average Gaussian field, thus, to get an exponential upper bound on the probability of the event on the right-hand side of~\eqref{eq:crucial} it suffices to use the Borell--TIS inequality  (see Theorem 2.1.1, p. 50 of~\cite{adler2009random}), which yields
  \begin{equation}
  \label{eq:IntermediateStepBTIS}
      \bbP\bigg[\cA^{s,\Delta}_N \cap \bigcap_{z \in \cC} \{\inf_{D_z} h^z \leq -s \} \bigg]  \leq \exp \Big\{ - \frac{1}{2\sigma^2} \Big(s + \beta\Delta - \Big|\bbE\Big[\inf_{f \in \cF}\widehat{Z}_f\Big]\Big| \Big)^2_+ \Big\},
  \end{equation}
  where $\sigma^2 = \sup_{f\in \cF} \var[\widehat{Z}_f]$. 
  Note that $\bbE[\inf_{f \in \cF} \widehat{Z}_f] = \bbE[\inf_{f \in \cF} Z_f]$. From (4.17) of Theorem 4.2 in~\cite{sznitman2015disconnection} (which uses Dudley's Theorem on the supremum of a Gaussian process), we obtain that 
\begin{equation}
\label{eq:boundinfZ_f}
\sup_{\kappa \in \cK_N} \Big|\bbE\Big[\inf_{f \in \cF}\widehat{Z}_f\Big]\Big| \bigg( \frac{|\cC|}{\capa_{\bbZ^d}(C)} \bigg)^{-\frac{1}{2}} \leq \frac{c_3}{K}.
\end{equation}
   Upon combining this bound with the capacity lower bound~\eqref{eq:capacitysol} and the estimate on the cardinality of $\cC$~\eqref{eq:estimate_numpoints} we infer, for $K$ large enough
\begin{equation}
\label{eq:BoundCoverCap}
  \limsup_{N \rightarrow \infty} \sup_{\kappa \in \cK_N}\Big|\bbE\Big[\inf_{f \in \cF}\widehat{Z}_f\Big]\Big| \stackrel{\eqref{eq:boundinfZ_f}}{\leq}  \limsup_{N \rightarrow \infty} \sup_{\kappa \in \cK_N} \frac{c_3}{K}\bigg(\frac{|\cC|}{\capa_{\bbZ^d}(C)}\bigg)^{1/2} = 0,
\end{equation} 
see also (3.33)--(3.34) of~\cite{nitzschner2018entropic} for a similar argument. After taking logarithms in \eqref{eq:IntermediateStepBTIS}, dividing by $N^{d-2}$ and taking limits we thus obtain, for $K$ large enough,
  \begin{equation}\label{eq:firststep}
      \varlimsup_{N \rightarrow \infty} \sup_{\kappa \in \cK_N} \frac{1}{N^{d-2}}\log  \bbP\bigg[ \cA^{s,\Delta}_N \cap \bigcap_{z \in \cC} \{\inf_{D_z} h^z \leq -s \} \bigg] 
      \leq  -   \varliminf_{N\to\infty} \inf_{\kappa\in \cK_N} \frac{1}{2}\frac{( s + \beta \Delta)^2}{\sup_{f\in \cF} \var[\widehat{Z}_f] N^{d-2}}.
  \end{equation}
  We are left with providing an upper bound  for the variance of $\widehat{Z}_f$. To this end, we write
  \begin{equation}\label{eq:expansion_variance}
  \var(\widehat{Z}_f) = (1 + \beta \langle h_{\mathring{A}}, \eta\rangle)^2 \var(Z_f) - 2\beta(1 + \beta\langle  h_{\mathring{A}},\eta \rangle) \cG_N + \beta^2 \cH_N,
  \end{equation}
  where we defined 
  \begin{equation}
  \label{eq:GNHNDefinition}
  \cG_N = \bbE[Z_f \langle \bbX_N,\eta \rangle], \qquad \cH_N = \bbE[\langle \bbX_N,\eta \rangle^2].
  \end{equation} 
  From the proof of Theorem 4.2 of \cite{sznitman2015disconnection}, we have
\begin{equation}
\label{eq:UpperBoundZf}
\var(Z_f) \leq \frac{1}{\capa_{\bbZ^d}(C)} \left(\frac{c}{K^{d-2}} + \gamma(K,L_0) \right),
\end{equation}
where 
\begin{equation}\label{eq:gammaKL}
\gamma(K,L_0) = \sup_{\substack{z,z' \in \bbZ^d \\ |z-z'|_\infty \geq K L_0}} \sup_{\substack{y \in D_z, y' \in D_{z'} \\ x \in B_z, x' \in B_{z'} }}  \frac{g(y,y')}{g(x,x')} (\geq 1),
\end{equation}
and one has $\lim_K \limsup_N \gamma(K,L_0) = 1$. We also define for later use 
\begin{align}
\label{eq:DefGammaTilde}
\widetilde{\gamma}(K,L_0) & = \inf_{\substack{z,z' \in \bbZ^d \\ |z-z'|_\infty \geq K L_0}} \inf_{\substack{y \in D_z, y' \in D_{z'} \\ x \in B_z, x' \in B_{z'} }}  \frac{g(y,y')}{g(x,x')} (\leq 1), \\
\label{eq:Alpha1}
\alpha_1(K) &  = \frac{c}{K^{d-2}} +  \limsup_{N \rightarrow \infty}  \gamma(K,L_0), \\
\label{eq:Alpha2}
\alpha_2(K) & = \liminf_{N \rightarrow \infty} \widetilde{\gamma}(K,L_0),
\end{align}
and note that $\lim_K \alpha_1(K) = \lim_K \alpha_2(K) = 1$. 
We continue by looking for an asymptotic lower bound for $\cG_N$. We claim that
\begin{equation}\label{eq:claimGN}
  \liminf_{N\to \infty} \inf_{\kappa\in \cK_N} \inf_{f\in \cF} (\capa_{\bbZ^d}(C) \cG_N)\geq \alpha_2(K) \langle h_{\mathring{A}},\eta \rangle.
\end{equation}
In order to prove~\eqref{eq:claimGN}, first we introduce $\cU = \bigcup_{z\in \cC} U_z$. For every $x\notin \cU$ we have $d(x,C)\geq K L_0$ and $\bbE[h^z(f(z))\varphi_x] = g(f(z),x)$. Thus,
\begin{equation}
\label{eq:EstimationGN}
\begin{split}
  \cG_N & = \frac{1}{N^d} \sum_{x\in \bbZ^d} \eta\big(\tfrac{x}{N}\big) \sum_{z\in \cC} \bbE[h^z(f(z))\varphi_x] \lambda(z)\\
  & \geq \tfrac{1}{N^d \capa_{\bbZ^d}(C)} \sum_{x\in \cU^c} \eta\big(\tfrac{x}{N}\big) \sum_{z\in \cC} g(f(z),x) e_C(B_z)\\
  & \geq \tfrac{\widetilde{\gamma}(K,L_0)}{N^d \capa_{\bbZ^d}(C)} \sum_{x\in \cU^c} \eta\big(\tfrac{x}{N}\big) \sum_{x'\in C} g(x,x') e_C(x')\\
  & \stackrel{\eqref{eq:EquilibriumPotential}}{\geq} \tfrac{\widetilde{\gamma}(K,L_0)}{\capa_{\bbZ^d}(C)}\bigg(\tfrac{1}{N^d} \sum_{x \in \bbZ^d}  \eta\big(\tfrac{x}{N}\big) P_x[H_C<\infty]
  - \tfrac{1}{N^d}\sum_{x\in \cU} \eta\big(\tfrac{x}{N}\big)\bigg).
\end{split}
\end{equation}
Given, $\kappa \in \cK_N$, one has the following upper bound on the number of elements of $\cU$:
\begin{equation}\label{eq:BoundU}
|\cU|\leq  (10K)^d |C|,
\end{equation}
from which we infer using the `small volume' of the interface $C$~\eqref{eq:smallVolumeOfC} that
\begin{equation}
\label{eq:UTermVanishes}
\frac{1}{N^d} \sup_{\kappa \in \cK_N} \sum_{x \in \cU}\eta\left( \frac{x}{N} \right) \leq \| \eta \|_\infty   \sup_{\kappa \in \cK_N} \frac{|\cU|}{N^d} \rightarrow 0, \qquad \text{ as }N \rightarrow \infty. 
\end{equation}
We can therefore discard the last part of~\eqref{eq:EstimationGN} in our estimation. Thus, combining \eqref{eq:Alpha2},~\eqref{eq:EstimationGN} and~\eqref{eq:UTermVanishes} with the `pointwise solidification' lower bound \eqref{eq:pointwisesolid} yields~\eqref{eq:claimGN}.

We proceed by giving an asymptotic bound for $\cH_N = \bbE[\langle \bbX_N, \eta \rangle^2]$. 
By Lemma 2.2 of \cite{bolthausen1993critical}, we have
\begin{equation}\label{eq:HNbound}
  \lim_{N\to \infty} N^{d-2} \cH_N = \lim_{N\to \infty}  \frac{1}{N^{d+2}} \sum_{x,y \in \bbZ^d} \eta\left( \frac{x}{N}\right) g(x,y)\eta \left( \frac{y}{N}\right) = d\, E(\eta).
\end{equation}
We can now put~\eqref{eq:UpperBoundZf},~\eqref{eq:claimGN} and~\eqref{eq:HNbound} together into~\eqref{eq:expansion_variance} to conclude that
\begin{equation}\label{eq:finally}
  \limsup_{N\to\infty} \sup_{\kappa\in \cK_N} \sup_{f\in\cF} N^{d-2} \var[\widehat{Z}_f] \leq \limsup_{N\to\infty} \sup_{\kappa\in \cK_N} \frac{N^{d-2}}{\capa_{\bbZ^d}(C)} \widetilde{\alpha}(K)  + \beta^2 d E(\eta),
\end{equation}
where we defined
\[
\widetilde{\alpha}(K) = \alpha_1(K) + \beta^2 \langle h_{\mathring{A}},\eta\rangle^2\Big[\alpha_1(K)-\alpha_2(K)\Big] + 2 \beta \langle h_{\mathring{A}},\eta\rangle\Big[\alpha_1(K)-\alpha_2(K)\Big].
\]
Note that $\widetilde{\alpha}(K) \to 1$ as $K\to\infty$.
This concludes the proof of the proposition by plugging~\eqref{eq:finally} into~\eqref{eq:firststep}. 
  \end{proof}

We are now ready to finish the proof of Theorem~\ref{thm:pushdown} with the help of Proposition~\ref{prop:3.3}. Recall that $\alpha + a = \delta< \overline{h}$ and choose $\overline{h}-\delta$ small enough so that $\widetilde{\Delta} = \Delta -(\overline{h}-\delta) \langle h_{\mathring{A}},\eta \rangle > 0$. For a fixed $\beta>0$, we define the auxiliary function 
\begin{equation}
\label{eq:DefinitionAuxiliaryFunction}
F(t,u;\beta) = \frac{u}{t + \beta^2 d E(\eta) u},\qquad t > 0,\, u > 0,
\end{equation}
which is increasing in $u > 0$ and jointly continuous in $t, u$. 

Combining the bound~\eqref{eq:CoarseGrainingBound} with Proposition~\ref{prop:3.3} (with the choice $s = a ( = \delta - \alpha)$), we obtain for all $K$ large enough
\begin{equation}\label{eq:finaltouch}
  \begin{aligned}
    \varlimsup_{N \rightarrow \infty}\frac{1}{N^{d-2}} & \log \bbP[\cA^{\overline{h}-\alpha,\Delta}_N \cap \cD^\alpha_N] = \varlimsup_{N \rightarrow \infty}\frac{1}{N^{d-2}} \log \bbP[\cA^{a,\widetilde{\Delta}}_N \cap \cD^\alpha_N] \\ & \leq - \varliminf_{N \rightarrow \infty} \inf_{\kappa \in \cK_N} \frac{1}{2}( a + \beta \widetilde{\Delta})^2 F\Big(\widetilde{\alpha}(K), \tfrac{1}{N^{d-2}} \capa_{\bbZ^d}(C) ;\beta\Big).
  \end{aligned}
\end{equation}

We obtain upon applying $\liminf_K$ on both sides of~\eqref{eq:finaltouch}, using~\eqref{eq:capacitysol} and $\widetilde{\alpha}(K) \rightarrow 1$: 
\begin{equation}
\limsup_{N \rightarrow \infty} \frac{1}{N^{d-2}} \log \ \bbP[\cA^{\overline{h}-\alpha,\Delta}_N \cap \cD^\alpha_N] \leq - \frac{1}{2}(a + \beta \widetilde{\Delta})^2 F\Big(1,\tfrac{1}{d}\capa(\mathring{A}); \beta\Big),
\end{equation}
in view of the monotonicity in $u$ of $F$. Taking $\delta \rightarrow \overline{h}$ (which implies that $\widetilde{\Delta} \rightarrow \Delta$), we see that
\begin{equation}
\limsup_{N \rightarrow \infty} \frac{1}{N^{d-2}} \log \ \bbP[\cA^{\overline{h}-\alpha,\Delta}_N \cap \cD^\alpha_N] \leq - \frac{1}{2}(\overline{h} - \alpha + \beta \Delta)^2 F\Big(1, \tfrac{1}{d}\capa(\mathring{A});\beta\Big),
\end{equation}
and~\eqref{eq:thm31} now follows by reinserting the definition of the auxiliary function $F$ from \eqref{eq:DefinitionAuxiliaryFunction} and optimizing in $\beta > 0$. 
\end{proof}

\section{Pinning of the entropic push-down under disconnection}
In this section, we state and prove in Theorem \ref{thm:MainUpperBound} our main result, namely an asymptotic upper bound on the probability of the intersection between the disconnection event and the event that for a fixed compact set $J \subseteq \bbR^d$, the $d_J$-distance (which we introduce in \eqref{eq:DistanceMeasures}) between the measures $\bbX_N$ and $\cH^\alpha_{\mathring{A}}(x)\De x$ is larger than a given $\Delta>0$. In essence, $d_J(\bbX_N,\cH^\alpha_{\mathring{A}})$ encodes the maximal separation between $\langle \bbX_N,\eta\rangle$ and $\langle \cH^\alpha_{\mathring{A}},\eta\rangle$, when $\eta$ runs over a class of functions with support in $J$ and uniformly bounded sup-norm and Lipschitz constant, see \eqref{eq:DefLip_1}. Note that the large deviation upper bound comes with a non-explicit exponential rate compared to the `push-down' result of Theorem \ref{thm:pushdown} in the previous section. The extension of the result from Theorem \ref{thm:pushdown} is twofold: On one hand, we have to deal with the aforementioned uniformity, on the other hand the entropic `push-down' result must be accompanied with a corresponding bound that ensures that, with high probability, the local average $\langle \bbX_N, \eta \rangle$ of a \emph{fixed} continuous, compactly supported function $\eta : \bbR^d \rightarrow [0,\infty)$ can in fact not be below $\langle\cH^\alpha_{\mathring{A}}, \eta \rangle$ (whereas \eqref{eq:thm31} essentially only showed that, with high probability, it cannot be above $\langle\cH^\alpha_{\mathring{A}}, \eta \rangle$). 

Let us mention that Theorem~\ref{thm:MainUpperBound} will find an application in the proof of Proposition~\ref{prop:XclosetoTarget} (cf.\ \eqref{eq:LastBoundProp55}), which is an intermediate step towards the proof of Theorem~\ref{thm:ProfileDescription}, that provides a profile description of the field.
\vspace{\baselineskip} 

We will need some preparations before we can state the main Theorem. First, we introduce for any continuous, compactly supported function $\eta : \bbR^d \rightarrow  \bbR$ the notation
\begin{equation}
\| \eta \|_{BL} = \| \eta \|_\infty + \sup_{x,y \in \bbR^d; x \neq y} \frac{|\eta(x) - \eta(y)|}{|x-y|} \in [0,\infty],
\end{equation}
for the sum of the sup-norm and Lipschitz constant of $\eta$. For a non-empty set $J \subseteq \bbR^d$, we define the function space
\begin{equation}
\label{eq:DefLip_1}
\lip_1(J) = \Big\{\text{$\eta : \bbR^d \rightarrow \bbR$, with $\supp \eta \subseteq J$ and $\| \eta \|_{BL} \leq 1$}\Big\}.
\end{equation}
With this definition at hand, we introduce for two signed Radon measures $\mu$ and $\nu$ on $\bbR^d$ the metric 
\begin{equation}
\label{eq:DistanceMeasures}
d_J(\mu,\nu) = \sup \{|\langle \mu - \nu,\eta \rangle|; \eta \in \lip_1(J) \}.
\end{equation}
If both $\mu$ and $\nu$ are probability measures and if $J = \bbR^d$, $d_J(\mu,\nu)$ is recognized as the Kantorovich-Rubinstein distance of the probability measures $\mu$ and $\nu$, see e.g.~p.191 of \cite{bogachev2007measure}. If $\nu$ has a density $f$ with respect to the Lebesgue measure on $\bbR^d$, i.e. $\nu(\De x) = f(x)\De x$, we write for simplicity $d_J(\mu,f)$ for $d_J(\mu,\nu)$. 
 We now come to the main asymptotic upper bound.
\begin{theorem}
\label{thm:MainUpperBound}
Consider $\Delta > 0$, $\alpha < \overline{h}$ and a compact, non-empty set $J \subseteq \bbR^d$. Then one has the asymptotic upper bound
\begin{equation}\begin{split}
\label{eq:Theorem4_1_claim}
\limsup_{N\to \infty} \frac{1}{N^{d-2}} \log \ & \bbP\Big[ d_J(\bbX_N, \cH^\alpha_{\mathring{A}}) \geq \Delta ; \cD^{\alpha}_N\Big]  \leq -\frac{1}{2d} (\overline{h}-\alpha)^2 \capa(\mathring{A}) - c_1(\Delta, \alpha, J).
\end{split}
\end{equation}
\end{theorem}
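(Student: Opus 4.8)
The plan is to bootstrap from Theorem~\ref{thm:pushdown} (the one-sided push-down bound) to the two-sided localization statement~\eqref{eq:Theorem4_1_claim}. Observe first that $d_J(\bbX_N,\cH^\alpha_{\mathring A}) = \sup_{\eta\in\lip_1(J)}|\langle\bbX_N,\eta\rangle - \langle\cH^\alpha_{\mathring A},\eta\rangle|$ is, modulo measurability issues handled by separability of $\lip_1(J)$ in $\|\cdot\|_\infty$, a genuine random variable. The event $\{d_J(\bbX_N,\cH^\alpha_{\mathring A})\ge\Delta\}$ splits according to whether the deviation is achieved `from above' or `from below', so it suffices to control, uniformly over $\eta\in\lip_1(J)$ and over both signs, the probability of $\{\langle\bbX_N,\eta\rangle\ge\langle\cH^\alpha_{\mathring A},\eta\rangle+\Delta;\cD^\alpha_N\}$ and $\{\langle\bbX_N,\eta\rangle\le\langle\cH^\alpha_{\mathring A},\eta\rangle-\Delta;\cD^\alpha_N\}$. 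The push-down side is essentially Theorem~\ref{thm:pushdown} applied to $\eta^+$ and $\eta^-$ separately (since $\eta$ need no longer be nonnegative), but with a crucial twist: we need the rate to be \emph{uniform} over the noncompact family $\lip_1(J)$. Here one replaces the pointwise solidification bound (Lemma~\ref{prop:harmonic_bound}), which entered the proof of Theorem~\ref{thm:pushdown} via~\eqref{eq:pointwisesolid}, with the energy inequality~\eqref{eq:CSEnergies}: the covariance term $\cG_N$ in~\eqref{eq:expansion_variance} and the excess cost are controlled through $|\langle f,\eta\rangle|^2\le E(\eta)\,\cE(h_C\text{-ish},\cdot)$-type bounds, trading an explicit rate for a rate that is nevertheless bounded below uniformly over $\eta\in\lip_1(J)$ by a positive constant depending only on $\Delta$, $\alpha$, $J$ (and $A$, $M$, $d$).

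\textbf{The complementary `lower' bound} — ruling out $\langle\bbX_N,\eta\rangle$ being \emph{too small} on $\cD^\alpha_N$ — is the genuinely new ingredient. The strategy is: on the disconnection event the field is pushed down by roughly $-(\overline h-\alpha)h_{\mathring A}$ (that is the content of Theorem~\ref{thm:pushdown}), and a further downward deviation would cost extra Gaussian energy that the disconnection event cannot `pay for'. Quantitatively, one again coarse-grains $\cD^\alpha_N$ into $\cD_{N,\kappa}$, $\kappa\in\cK_N$, using the machinery set up in Section~3 (Steps 1--3 carry over verbatim, including the capacity lower bound~\eqref{eq:capacitysol} and the small-volume estimate~\eqref{eq:smallVolumeOfC}). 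On each $\cD_{N,\kappa}$ the relevant boxes are $h$-bad, so $\langle\bbX_N,\eta\rangle$ decomposes via the harmonic-average/local-field splitting; the $h$-bad constraint forces a downward bias of size $\approx(\overline h-\alpha)\langle h_{\mathring A},\eta\rangle$ through the potential generated by $C$, and any \emph{additional} downward shift of size $\Delta$ is a large deviation for a Gaussian functional whose variance is $O(N^{2-d})$, hence costs $\exp\{-cN^{d-2}\Delta^2\}$ on top of $\exp\{-\tfrac{1}{2d}(\overline h-\alpha)^2\capa(\mathring A)N^{d-2}\}$. Concretely one builds a Gaussian functional analogous to $\widehat Z_f$ of~\eqref{eq:DefZ_hat} but now tuned to detect the event $\{\langle\bbX_N,\eta\rangle\le\langle\cH^\alpha_{\mathring A},\eta\rangle-\Delta\}\cap\bigcap_{z\in\cC}\{\inf_{D_z}h^z\le-s\}$, applies Borell--TIS, and uses the energy bound~\eqref{eq:CSEnergies} (in place of the pointwise Lemma~\ref{prop:harmonic_bound}) to control the variance and the mean uniformly in $\eta\in\lip_1(J)$; one optimizes over an interpolation parameter $\beta$ and sends $\delta,\gamma\uparrow\overline h$, $K\to\infty$ as before.

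\textbf{Finally,} to pass from a per-$\eta$ bound to a bound on the supremum over $\lip_1(J)$ one needs a uniformity/entropy argument. Since $\lip_1(J)$ with $\|\cdot\|_\infty$ is a totally bounded subset of $C(J)$, one covers it by finitely many $\|\cdot\|_\infty$-balls of radius $\Delta/4$ centered at $\eta_1,\dots,\eta_{m_\Delta}$, with $m_\Delta$ depending only on $\Delta$ and $J$ (this uses that $|\langle\bbX_N,\eta-\eta'\rangle|\le\|\eta-\eta'\|_\infty\,\tfrac{1}{N^d}\sum_{x}|\varphi_x|\mathbbm 1_{x/N\in J}$, and the empirical $\ell^1$-average of $|\varphi|$ is, on the relevant events, not too large — itself a mild Gaussian concentration statement via~\eqref{eq:GaussianEstimate} and~\eqref{eq:BoundsCovMatrixGFF}, contributing only a subexponential or negligible correction). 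A union bound over the finite net, with $\Delta$ replaced by $\Delta/2$, then yields~\eqref{eq:Theorem4_1_claim} with $c_1(\Delta,\alpha,J)$ the minimum of the two (push-up / push-down) uniform excess rates evaluated at $\Delta/2$.

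\textbf{Main obstacle.} The hardest part is obtaining the \emph{uniform} positive excess rate over the infinite-dimensional family $\lip_1(J)$ while simultaneously handling the `below' direction, which has no analogue in Theorem~\ref{thm:pushdown}. The loss of the pointwise solidification estimate forces reliance on the weaker Cauchy--Schwarz-type energy bound~\eqref{eq:CSEnergies}, and one must check carefully that the resulting variance estimates for the relevant Gaussian functionals degrade gracefully — i.e.\ that $\inf_{\eta\in\lip_1(J)}$ of the excess rate is still strictly positive and does not collapse as one approaches degenerate $\eta$ (e.g.\ $\eta$ supported near $\partial J$ or nearly vanishing), which is ultimately guaranteed because any $\eta\in\lip_1(J)$ realizing a deviation $\ge\Delta$ must have $\|\eta\|_\infty$ bounded below by a constant depending on $\Delta$ and $\mathrm{diam}(J)$.
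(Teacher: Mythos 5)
Your architecture (coarse-graining, a Borell--TIS functional for each sign of the deviation, a finite net for the uniformity over $\lip_1(J)$) is broadly the right shape, but the core of the `below' direction has a genuine gap. You propose to run the Section~3 argument with the energy inequality \eqref{eq:CSEnergies} replacing the pointwise solidification bound of Lemma~\ref{prop:harmonic_bound}. However, \eqref{eq:CSEnergies} only converts the relevant covariance term into something of the form $E(\eta)^{1/2}\,\cE(h_{\mathring{A}}-h_{\widehat{\Sigma}},h_{\mathring{A}}-h_{\widehat{\Sigma}})^{1/2}$, and this Dirichlet energy is \emph{not} uniformly small over the coarse-grained interfaces: solidification yields only one-sided information ($h_{\widetilde{\Sigma}}\gtrsim h_{A'}$, $\capa(\Sigma)\gtrsim\capa(A')$), while a given $\kappa\in\cK_N$ may carry far more $h$-bad boxes than needed, so that $h_{\widehat{\Sigma}}$ and its energy may greatly exceed those of $h_{\mathring{A}}$. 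In the variance expansion \eqref{eq:expansion_variance} one needs an \emph{upper} bound $\capa_{\bbZ^d}(C)\,\cG_N\le\langle h_{\mathring{A}},\eta\rangle+\text{(small)}$ for the cancellation that makes the optimization in $\beta$ produce a strictly positive excess rate; with only a crude bound, the denominator acquires a linear-in-$\beta$ term with a constant of order one, and the resulting rate need not exceed $\tfrac{1}{2d}(\overline{h}-\alpha)^2\capa(\mathring{A})$ for any $\beta$. There is also a conceptual reason a per-interface estimate must fail here: if the bad-box interface is much larger than $A_N$, the conditional mean of $\langle\bbX_N,\eta\rangle$ given the bad boxes lies \emph{below} $-a\langle h_{\mathring{A}},\eta\rangle$, so the `below' event carries no extra cost conditionally; the extra cost sits in the interface itself. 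This is precisely why the paper splits $\cK_N=\cK^\mu_N\cup\widetilde{\cK}^\mu_N$ according to whether $\cE(h_{\mathring{A}}-h_{\widehat{\Sigma}},h_{\mathring{A}}-h_{\widehat{\Sigma}})\le\mu$ (see \eqref{eq:PartitionK_N}): on $\cK^\mu_N$ your Cauchy--Schwarz step does work and gives the $\sqrt{\mu}$-error of Lemma~\ref{thm:Lemma44}, while on $\widetilde{\cK}^\mu_N$ one does not estimate $\cG_N$ at all but invokes the Dirichlet-form solidification Lemma~\ref{thm:SolidifDirichlet} to show that $\capa(\widehat{\Sigma})$ exceeds $\capa(\mathring{A})$ by a definite margin, so that $\bigcap_{z\in\cC}\{\inf_{D_z}h^z\le-a\}$ is already strictly more costly than disconnection (cf.\ \eqref{eq:BoundD_LargeDirichletform}--\eqref{eq:BoundSplit_1}). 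Your proposal contains no mechanism for these `fat' interfaces, and without one the below-direction bound does not go through; notably, your ``main obstacle'' paragraph worries about degenerate $\eta$, which is not where the difficulty lies.

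On the uniformity over $\lip_1(J)$: your finite sup-norm net plus control of $N^{-d}\sum_{x\in J_N}|\varphi_x|$ is a legitimate alternative to the paper's route (mollification, reduction to the location family $\{\chi_\epsilon(\cdot-x)\}_{x\in J}$, and Dudley's entropy bound in Lemma~\ref{thm:Lemma45} to handle $\sup_{x\in J}$ inside Borell--TIS), and it even sidesteps the metric-entropy step since each net point is a fixed function. Two corrections, though: the cost of $\{N^{-d}\sum_{x\in J_N}|\varphi_x|\ge T\}$ obtained from \eqref{eq:GaussianEstimate} and \eqref{eq:BoundsCovMatrixGFF} is of exponential order $N^{d-2}$, not subexponential, so you must choose the threshold $T$ (hence the net radius of order $\Delta/T$) large enough that this cost strictly exceeds the target rate, exactly as the paper tunes $\epsilon$ in \eqref{eq:RelationForCapA}; and after splitting a net point as $\eta=\eta^+-\eta^-$, each sign of deviation for $\eta$ still produces a below-type event for one of $\eta^\pm$, so the gap described above cannot be avoided by reducing to Theorem~\ref{thm:pushdown} alone.
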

Let us explain shortly, why $\{d_J(\bbX_N, \cH^\alpha_{\mathring{A}}) \geq \Delta \}$ is in fact measurable. First, we notice that since $J \subseteq \bbR^d$ is compact, the set $C^0(J)$ of continuous functions from $J$ to $\bbR$, equipped with $\| \cdot \|_\infty$ is a separable metric space. Thus, $\lip_1(J) \subseteq C^0(J)$ is separable itself with respect to $\| \cdot \|_\infty$. Let $\{\eta_n \}_{n \in \bbN}$ be a countable, dense subset of $\lip_1(J)$. Since for any $\varphi \in \bbR^{\bbZ^d}$, $N \geq 1$, both $\bbX_N$ and $\cH^\alpha_{\mathring{A}}(x) \De x$ are finite measures on compact sets, one has
\begin{equation}
\lbrace d_J(\bbX_N, \cH^\alpha_{\mathring{A}}) \geq \Delta \rbrace = \Big\lbrace \sup_{n \in \bbN} |\langle \bbX_N - \cH^\alpha_{\mathring{A}}, \eta_n\rangle| \geq \Delta \Big\rbrace,
\end{equation}
and therefore the set on the left-hand side of the equation is measurable. 

The following consequence is immediate from combining Theorem \ref{thm:MainUpperBound} with the lower bound \eqref{eq:DiscLowerBound} and follows in the same way as Corollary \ref{thm:Corollary32}, using~\eqref{eq:Equality_crit_par}, see~\cite{duminil2019equality}.
\begin{corollary}
\label{thm:Corollary42}
Consider $\Delta, \alpha, J$ as in Theorem \ref{thm:MainUpperBound} and assume that $\capa(A) = \capa(\mathring{A})$. Then, one has 
\begin{equation}
\limsup_{N\to\infty} \frac{1}{N^{d-2}} \log \bbP\Big[ d_J(\bbX_N, \cH^\alpha_{\mathring{A}}) \geq \Delta \big\vert \cD^\alpha_N \Big] \leq - c_1(\Delta, \alpha, J).
\end{equation}
\end{corollary}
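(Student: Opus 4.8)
\emph{Reduction to finitely many test functions.} I would first note that $\lip_1(J)$ is compact for $\|\cdot\|_\infty$ (Arzel\`a--Ascoli together with the closed support constraint $\supp\eta\subseteq J$), so that for each $\rho>0$ there is a finite $\rho$-net $\eta_1,\dots,\eta_m\in\lip_1(J)$. Applying \eqref{eq:GaussianEstimate} to the Gaussian vector $(\varphi_x)_{x\in J_N}$ and using the covariance asymptotics \eqref{eq:BoundsCovMatrixGFF}, one gets $\bbP[\sum_{x\in J_N}\varphi_x^2\geq K_0^2N^d]\leq\exp\{-c(K_0)N^{d-2}\}$ with $c(K_0)\to\infty$ as $K_0\to\infty$; by Cauchy--Schwarz this controls the total mass of $|\bbX_N|$ over $J$. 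Choosing $K_0=K_0(\Delta,\alpha,J)$ large enough that $c(K_0)>\tfrac1{2d}(\overline h-\alpha)^2\capa(\mathring{A})$ by a definite amount, and the mesh $\rho=\rho(K_0,\Delta,J)$ small enough, the event $\{d_J(\bbX_N,\cH^\alpha_{\mathring{A}})\geq\Delta\}$ is, off the negligible mass event, contained in $\bigcup_i\{|\langle\bbX_N-\cH^\alpha_{\mathring{A}},\eta_i\rangle|\geq\tfrac\Delta2\}$. It then suffices to prove, for each fixed $\eta\in\lip_1(J)$, a bound $-\tfrac1{2d}(\overline h-\alpha)^2\capa(\mathring{A})-c(\Delta,\alpha,\eta)$ with $c(\Delta,\alpha,\eta)>0$ for $\bbP[|\langle\bbX_N-\cH^\alpha_{\mathring{A}},\eta\rangle|\geq\tfrac\Delta2;\cD^\alpha_N]$, and then set $c_1=\min_i c(\tfrac\Delta2,\alpha,\eta_i)$ (intersected with the mass-event contribution).

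\emph{Splitting into one-sided deviations.} Writing $\eta=\eta^+-\eta^-$ with $\eta^\pm\in\lip_1(J)$ non-negative, the event $\{|\langle\bbX_N-\cH^\alpha_{\mathring{A}},\eta\rangle|\geq\tfrac\Delta2\}$ is covered by the push-up events $\{\langle\bbX_N,\zeta\rangle\geq\langle\cH^\alpha_{\mathring{A}},\zeta\rangle+\tfrac\Delta4\}$ and the push-down events $\{\langle\bbX_N,\zeta\rangle\leq\langle\cH^\alpha_{\mathring{A}},\zeta\rangle-\tfrac\Delta4\}$, $\zeta\in\{\eta^+,\eta^-\}$. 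The push-up events intersected with $\cD^\alpha_N$ are handled directly by Theorem~\ref{thm:pushdown}, giving $-\tfrac1{2d}(\overline h-\alpha)^2\capa(\mathring{A})-\tfrac{(\Delta/4)^2}{2dE(\zeta)}$, with $E(\zeta)\leq\sup_{\xi\in\lip_1(J)}E(\xi)<\infty$. The new work is the push-down (anti-concentration) bound: for fixed non-negative continuous compactly supported $\eta$ and $\Delta'>0$,
\[
\limsup_{N\to\infty}\frac1{N^{d-2}}\log\bbP\big[\langle\bbX_N,\eta\rangle\leq\langle\cH^\alpha_{\mathring{A}},\eta\rangle-\Delta';\cD^\alpha_N\big]\leq-\tfrac1{2d}(\overline h-\alpha)^2\capa(\mathring{A})-c(\Delta',\alpha,\eta).
\]

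\emph{The push-down bound.} For this I would reuse verbatim the coarse-graining of the proof of Theorem~\ref{thm:pushdown}: pass to $\widetilde{\cD}^\alpha_N$ via \eqref{eq:SuperExponentialBound}, decompose $\widetilde{\cD}^\alpha_N=\bigcup_{\kappa\in\cK_N}\cD_{N,\kappa}$ with $|\cK_N|=e^{o(N^{d-2})}$, and use $\cD_{N,\kappa}\subseteq\bigcap_{z\in\cC}\{\inf_{D_z}h^z\leq-a\}$ with $\alpha+a=\delta<\overline h$. Fixing $\kappa$ and recalling $\cF,Z_f$ from \eqref{eq:FunctionClassDef}--\eqref{eq:DefZ_hat}, one has $|\cF|=e^{o(N^{d-2})}$ and $\bigcap_z\{\inf_{D_z}h^z\leq-a\}\subseteq\bigcup_{f\in\cF}\{Z_f\leq-a\}$, so after a union bound over $f$ it suffices to bound, uniformly in $\kappa,f$ and for each $\theta\geq0$, the probability of $\{Z_f\leq-a\}\cap\{\langle\bbX_N,\eta\rangle\leq-a\langle h_{\mathring{A}},\eta\rangle-\widetilde\Delta\}$ (with $\widetilde\Delta=\Delta'-(\overline h-\delta)\langle h_{\mathring{A}},\eta\rangle>0$ for $\delta\uparrow\overline h$), which is contained in $\{Z_f+\theta\langle\bbX_N,\eta\rangle\leq-(a+\theta(a\langle h_{\mathring{A}},\eta\rangle+\widetilde\Delta))\}$; a Gaussian tail estimate gives the exponential of $-(a+\theta(\cdots))^2/(2\var(Z_f+\theta\langle\bbX_N,\eta\rangle))$. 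The heart of the matter is the variance estimate: bound $\var(Z_f)$ by \eqref{eq:UpperBoundZf}, $\cH_N=\bbE[\langle\bbX_N,\eta\rangle^2]$ by \eqref{eq:HNbound}, and the cross term $\cG_N=\bbE[Z_f\langle\bbX_N,\eta\rangle]$ \emph{from above} via $\bbE[h^z(f(z))\varphi_x]\leq g(f(z),x)$, a Green-function ratio argument for $x$ at distance $\gtrsim KL_0$ from $C$, and the small volume \eqref{eq:smallVolumeOfC} of $C$ for the remaining $x$, obtaining $\cG_N\lesssim\langle h_\Sigma,\eta\rangle/\capa_{\bbZ^d}(C)$ up to $K$-dependent and $o(1)$ errors. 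Combining this with the capacity comparison between $\capa_{\bbZ^d}(C)$ and $\capa(\Sigma)$ and with \eqref{eq:CSEnergies}, $\langle h_\Sigma,\eta\rangle^2\leq E(\eta)\capa(\Sigma)$, the quantity $N^{d-2}\var(Z_f+\theta\langle\bbX_N,\eta\rangle)$ is controlled by a perfect square of the shape $\big(\sqrt{N^{d-2}/\capa_{\bbZ^d}(C)}+\theta\sqrt{dE(\eta)}\big)^2$, up to errors vanishing as $K\to\infty$. Feeding in \eqref{eq:capacitysol}, letting $K\to\infty$, $\delta\uparrow\overline h$, and optimising over $\theta>0$ (and over $\kappa$) then produces the claimed bound. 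The difficult point is precisely to obtain a \emph{strictly positive} $c(\Delta',\alpha,\eta)$ for \emph{every} $\Delta'>0$: in the push-up direction this is automatic thanks to the pointwise solidification bound (Lemma~\ref{prop:harmonic_bound}), which has no analogue from below, whereas a naive use of the perfect-square variance estimate reproduces only the bare disconnection cost when $\Delta'$ is small, since the optimisation in $\theta$ then collapses to the larger of ``disconnection alone'' and ``atypical average alone''. To recover the missing $O((\Delta')^2)$ cost one must use the full joint Gaussian law of $(Z_f,\langle\bbX_N,\eta\rangle)$ rather than a single linear combination---equivalently, track jointly the capacity $\capa_{\bbZ^d}(C)$ of the extracted interface (whose growth makes disconnection itself more expensive) and the averaged harmonic potential $\langle h_\Sigma,\eta\rangle$ (whose growth makes the low value of $\langle\bbX_N,\eta\rangle$ cheaper, but is coupled to $\capa(\Sigma)$ through \eqref{eq:CSEnergies} and to $\capa(\mathring{A})$ through the solidification lower bound $h_\Sigma\geq h_{\mathring{A}}$). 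It is this two-sided optimisation, carried out uniformly over $\kappa\in\cK_N$, that yields a positive but non-explicit $c_1$.
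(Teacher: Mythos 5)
The statement you are asked to prove is a corollary of Theorem \ref{thm:MainUpperBound}, and the paper's proof is a two-line deduction, exactly as for Corollary \ref{thm:Corollary32}: write $\bbP[\,\cdot\,|\,\cD^\alpha_N]=\bbP[\,\cdot\,;\cD^\alpha_N]/\bbP[\cD^\alpha_N]$, bound the numerator by \eqref{eq:Theorem4_1_claim} and the denominator from below by \eqref{eq:DiscLowerBound}, and observe that by the equality of critical parameters \eqref{eq:Equality_crit_par} and the regularity hypothesis $\capa(A)=\capa(\mathring{A})$ the leading terms $\tfrac{1}{2d}(\overline{h}-\alpha)^2\capa(\mathring{A})$ and $\tfrac{1}{2d}(h_{\ast\ast}-\alpha)^2\capa(A)$ cancel, leaving $-c_1(\Delta,\alpha,J)$. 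Your proposal never performs this conditioning step: everything you do aims at the joint probability $\bbP[\,\cdot\,;\cD^\alpha_N]$, and you never use \eqref{eq:DiscLowerBound}, \eqref{eq:Equality_crit_par} or the hypothesis $\capa(A)=\capa(\mathring{A})$, which are precisely what the corollary is about. That final step is easy to add, but it means that what you are really attempting is a re-proof of Theorem \ref{thm:MainUpperBound} itself rather than of the corollary.

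In that re-derivation there is a genuine gap, which you diagnose correctly but do not close: the push-down (lower-deviation) bound. With a single Gaussian combination $Z_f+\theta\langle\bbX_N,\eta\rangle$ and the variance inputs you list, the optimisation over $\theta$ indeed degenerates for small $\Delta'$, because solidification only yields the lower bound $h_{\widehat{\Sigma}}\gtrsim h_{\mathring{A}}$ and hence no upper control of the cross term $\cG_N$ in terms of $\langle h_{\mathring{A}},\eta\rangle$; your proposed remedy (``use the full joint Gaussian law'', ``track jointly the capacity and the averaged harmonic potential'') is left at the level of a heuristic. The paper's concrete mechanism is the dichotomy \eqref{eq:PartitionK_N}: for $\kappa\in\cK_N^\mu$, i.e.\ $\cE(h_{\mathring{A}}-h_{\widehat{\Sigma}},h_{\mathring{A}}-h_{\widehat{\Sigma}})\leq\mu$, the energy Cauchy--Schwarz bound \eqref{eq:CSEnergies} gives $\langle h_{\widehat{\Sigma}},\eta\rangle\leq\langle h_{\mathring{A}},\eta\rangle+\sqrt{\mu\,E(|\eta|)}$ and the Borell--TIS computation (Proposition \ref{thm:Prop43}) goes through with a controllable error, while for $\kappa\in\widetilde{\cK}_N^\mu$ Lemma \ref{thm:SolidifDirichlet} converts the large energy gap into $\capa(\widehat{\Sigma})$ strictly exceeding $\capa(\mathring{A})$, i.e.\ a strictly larger disconnection cost (cf.\ \eqref{eq:BoundSplit_1}); choosing $\mu=\beta^2$ and $\beta$ small then produces a positive, non-explicit constant. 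Without this quantitative input (or an equivalent one) the two-sided optimisation you invoke does not yield $c_1>0$ for every $\Delta$. On the positive side, your finite-net reduction over $\lip_1(J)$ (compactness in sup-norm plus the mass control via \eqref{eq:GaussianEstimate}) is a legitimate alternative to the paper's mollification/location-family and Dudley-entropy step, and the push-up half via Theorem \ref{thm:pushdown} is correct.
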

It might be helpful at this point to give a short outline of the proof of Theorem \ref{thm:MainUpperBound} and explain where the coarse-graining procedure from the proof of Theorem \ref{thm:pushdown} comes into play. We will first introduce a mollifier $\chi_\epsilon$ and reduce the problem of controlling the supremum of $|\langle  \bbX_N - \cH^\alpha_{\mathring{A}},\eta \rangle |$ over the class $\lip_1(J)$ to a class with much smaller complexity, namely the `location family' $\{ \chi_\epsilon(\cdot - x) \}_{x \in J}$. This replacement involves the use of certain Gaussian estimates that were recalled in Section 2. As a next step, we use the same coarse-graining procedure as in Section 3 to decompose the disconnection event into a union over $\kappa \in \cK_N$, however we also distinguish cases where the Dirichlet energy of $h_{\mathring{A}} - h_{\widehat{\Sigma}}$ ($\widehat{\Sigma}$ is a slight enlargement of $\Sigma$ associated to $\kappa \in \cK_N$) is either larger or smaller than a given value $\mu > 0$ (corresponding to $\kappa \in \widetilde{\cK}^\mu_N$ and $\kappa \in \cK^\mu_N$, respectively), see \eqref{eq:PartitionK_N}.
We will show that, in light of Lemma~\ref{thm:SolidifDirichlet}, the probability of the former case happening decays faster than the probability of disconnection for every choice of $\mu>0$ (cf.\ \eqref{eq:BoundD_LargeDirichletform}--\eqref{eq:BoundSplit_1}).  In the latter case, we develop a bound on the probability of the coarse-grained event in Proposition \ref{thm:Prop43}, where we  introduce a centered Gaussian field $\bar{Z}_{f,x}$ similar to $\widehat{Z}_f$ from \eqref{eq:DefZ_hat}, where $f$ varies in the class $\cF$ of \eqref{eq:FunctionClassDef} and $\eta$ is replaced by the location family. Like in the proof of Theorem \ref{thm:pushdown}, we will apply the Borell--TIS inequality, and we thus need bounds on the variance of $\bar{Z}_{f,x}$ and the expectation of $\bar{Z} = \inf_{x \in J} \inf_{f \in \cF} \bar{Z}_{f,x}$, that are derived in Lemma \ref{thm:Lemma44} and Lemma \ref{thm:Lemma45}, respectively. 

The distinction into $\kappa \in \widetilde{\cK}^\mu_N$ and $\kappa \in \cK^\mu_N$, which was not required in the `push-down' result of Theorem \ref{thm:pushdown}, is needed for the following reason:
In the course of proving the entropic push-down, a pointwise lower bound of $h_{\widetilde{\Sigma}}$ in terms of $h_{A'}$, $A' \subseteq \mathring{A}$ compact, was used in the bound for the variance of $\widehat{Z}_f$, relying on the solidification estimate of Lemma \ref{prop:harmonic_bound}. A similar approach using $\bar{Z}_{f,x}$ to show that, with high probability, $\langle \bbX_N, \eta\rangle \geq \langle \cH^\alpha_{\mathring{A}}, \eta\rangle - \Delta$ for large $N$ involves a variance bound where $h_{\widehat{\Sigma}}$ must be bounded above in terms of  $h_{A'}$. Since such a bound is not obvious, we have to put in `by hand' the constraint that $h_{\widehat{\Sigma}}$ and $h_{A'}$ are close (which corresponds to $\kappa \in \cK^\mu_N$), see also Remark \ref{thm:Remark46}, 2).

\begin{proof}[Proof of Theorem \ref{thm:MainUpperBound}] We start by performing a reduction of the problem to a location family. Let $\chi : \bbR^d \rightarrow [0,\infty)$ be a symmetric, smooth probability density supported in $B_2(0,1)$, and set for $\epsilon >0$, $\chi_\epsilon(x) = \epsilon^{-d}\chi(x/\epsilon)$. We define for $\eta \in \lip_1(J)$ the mollification
\begin{equation}
\eta_\epsilon = \eta \ast \chi_\epsilon.
\end{equation}
Our preliminary goal is to replace $\eta$ in $\langle \bbX_N - \cH^\alpha_{\mathring{A}}, \eta \rangle$ by $\eta_\epsilon$ and to derive \eqref{eq:FirstBoundProof}. We first bound the deterministic contribution of the difference between $\langle \bbX_N - \cH^\alpha_{\mathring{A}}, \eta \rangle$ and $\langle \bbX_N - \cH^\alpha_{\mathring{A}}, \eta_\epsilon \rangle$ as follows: If $0 < \vartheta < \Delta$, one has for some $\epsilon \leq \epsilon_1(\vartheta,\alpha)$ the inequality
\begin{equation}
\label{eq:SmallnessConvolution}
\sup_{\eta \in \lip_1(J)} |\langle \cH^\alpha_{\mathring{A}},\eta - \eta_\epsilon \rangle | \leq \| \cH^\alpha_{\mathring{A}} \ast \chi_\epsilon - \cH^\alpha_{\mathring{A}}\|_{L^1(J)} < \tfrac{\vartheta}{2},
\end{equation}
where we used $\|\eta \|_\infty \leq 1$, $\supp \eta \subseteq J$ for all $\eta \in \lip_1(J)$ and the convergence of  $\cH^\alpha_{\mathring{A}}\ast \chi_\epsilon$ to $\cH^\alpha_{\mathring{A}}$ in $L^1(J)$ as $\epsilon \rightarrow 0$. 

 Consequently, we obtain for $\epsilon \leq \epsilon_1(\vartheta,\alpha)$ the inequality 
\begin{equation}
\label{eq:FirstBoundProof}
\begin{split}
\bbP\Big[d_J(\bbX_N,\cH^\alpha_{\mathring{A}}) \geq \Delta; \cD^\alpha_N \Big] &\leq \bbP\bigg[\sup_{\eta \in \lip_1(J)}  | \langle \bbX_N - \cH^\alpha_{\mathring{A}},\eta_\epsilon\rangle| \geq \Delta - \vartheta; \cD^\alpha_N \bigg] \\
& + \bbP\bigg[\sup_{\eta \in \lip_1(J)}  | \langle \bbX_N, \eta_\epsilon - \eta\rangle| \geq \tfrac{\vartheta}{2} \bigg].
\end{split}      
\end{equation}
We will now derive separate large deviation upper bounds on the two summands on the right-hand side of \eqref{eq:FirstBoundProof}. The second summand in which the disconnection event was neglected will be treated by applying the Gaussian  inequality \eqref{eq:GaussianEstimate}, leading to \eqref{eq:GaussianBoundResult}. For the first summand, one performs a coarse-graining of the disconnection event, decomposing the event $\widetilde{\cD}^\alpha_{\mathring{A}} = \cD^\alpha_N \setminus \cB_N$ (where $\cB_N$ is the `bad' event from \eqref{eq:BadEvent}) into a union over $\cD_{N,\kappa}, \kappa \in \cK_N$, which leads to \eqref{eq:CoarseGrainingSection4}.

We start with an upper bound on the second summand in \eqref{eq:FirstBoundProof}. For $z \in \bbR^d$, one has the bound
\begin{equation}
\sup_{\eta \in \lip_1(J)} |\eta_\epsilon(z) - \eta(z)| \leq \sup_{\eta \in \lip_1(J)} \| \eta \|_{BL} \int |y| \chi_\epsilon(y) \De y \leq c\epsilon. 
\end{equation} 
This enables us to bound the second term on the right-hand side in \eqref{eq:FirstBoundProof} as follows:
\begin{equation}
\bbP\bigg[\sup_{\eta \in \lip_1(J)}  | \langle \bbX_N,\eta_\epsilon - \eta\rangle| \geq \tfrac{\vartheta}{2} \bigg] \leq \bbP\bigg[ \frac{c\epsilon}{N^d} \sum_{y \in (J^\epsilon)_N} |\varphi_y| \geq \tfrac{\vartheta}{2}  \bigg],
\end{equation} 
where $(J^\epsilon)_N = (NJ^\epsilon) \cap \bbZ^d$ (and $J^\epsilon = \{x\in \bbR^d: d(x,J)\leq\epsilon \}$ denotes the closed $\epsilon$-neighborhood of $J$). Using the bounds \eqref{eq:BoundsCovMatrixGFF}, one obtains from \eqref{eq:GaussianEstimate} 
\begin{equation}
\label{eq:GaussianBoundResult}
\bbP\bigg[\sup_{\eta \in \lip_1(J)}  | \langle \bbX_N, \eta_\epsilon - \eta \rangle| \geq \tfrac{\vartheta}{2} \bigg] \leq \exp\bigg\{-\frac{1}{8}\min \Big( N^{d-2} \frac{c_4\vartheta^2}{\epsilon^2}, N^{d-1} \frac{c_5\vartheta^4}{\epsilon^4} \Big) \bigg\}.
\end{equation}
For later purposes, we choose $\epsilon \leq \epsilon_2(\vartheta,\alpha) (\leq \epsilon_1(\vartheta,\alpha))$ such that \eqref{eq:SmallnessConvolution} holds and moreover
\begin{equation}
\label{eq:RelationForCapA}
\frac{c_4\vartheta^2}{8\epsilon^2} > \frac{1}{2d}(\overline{h} - \alpha)^2 \capa(\mathring{A}).
\end{equation} 
This choice will be crucial when comparing the two different exponential rates of the two summands in \eqref{eq:FirstBoundProof} towards the end of the proof (cf.\ \eqref{eq:finalstep}).

Let us now deal with the first summand on the right-hand side of \eqref{eq:FirstBoundProof}. In view of the inequality
\begin{equation}
|\langle  \bbX_N - \cH^\alpha_{\mathring{A}},\eta_\epsilon \rangle| = \left\vert \int_J \eta(y) \langle \bbX_N - \cH^\alpha_{\mathring{A}}, \chi_\epsilon(\cdot - y) \rangle \De y \right\vert \stackrel{\eqref{eq:DefLip_1}}{\leq} |J|\sup_{y \in J} |\langle \bbX_N - \cH^\alpha_{\mathring{A}},\chi_\epsilon(\cdot-y) \rangle |,
\end{equation}
we are reduced to finding a large deviation upper bound on the probability of the event $\widetilde{\cA}^{\overline{h}-\alpha, ( \Delta-\vartheta)/|J|,\chi_\epsilon}_N \cap \cD^\alpha_N$, where we defined
\begin{equation}
\widetilde{\cA}^{s,\Delta',\zeta }_N = \left\lbrace \sup_{y \in J} |\langle \bbX_N + sh^\alpha_{\mathring{A}}, \zeta(\cdot-y) \rangle | \geq \Delta'\right\rbrace, 
\end{equation}
for any $\zeta :\bbR^d \rightarrow \bbR $ smooth, compactly supported function and $s, \Delta' > 0$ (which should be compared with \eqref{eq:DefAsDelta_N}). We will use the notation from \eqref{eq:ScalesDefinition}--\eqref{eq:BoxesDefinition}. By performing the same coarse-graining of the event $\widetilde{\cD}^\alpha_N = \cD^\alpha_N \setminus \cB_N$ leading up to \eqref{eq:CoarseGrainingBound}, we find that the first summand in the right hand side of~\eqref{eq:FirstBoundProof} can be estimated as
\begin{equation}
\label{eq:CoarseGrainingSection4}
  \begin{split}
    \limsup_{N \rightarrow \infty} \frac{1}{N^{d-2}} &\log\bbP\bigg[\sup_{\eta \in \lip_1(J)}  | \langle \bbX_N - \cH^\alpha_{\mathring{A}},\eta_\epsilon\rangle| \geq \Delta - \vartheta; \cD^\alpha_N \bigg] \\
&\leq\limsup_{N \rightarrow \infty} \frac{1}{N^{d-2}} \log \bbP\big[\widetilde{\cA}^{\overline{h}-\alpha,( \Delta-\vartheta)/|J|,\chi_\epsilon}_N \cap \cD^\alpha_N\big] 
\\&\leq \limsup_{N \rightarrow \infty} \sup_{\kappa \in \cK_N} \frac{1}{N^{d-2}} \log \bbP\big[\widetilde{\cA}^{\overline{h}-\alpha,( \Delta-\vartheta)/|J|,\chi_\epsilon}_N \cap \cD_{N,\kappa}\big],
  \end{split}
\end{equation}
relying on the fact that $\cB_N$ is negligible, cf.~\eqref{eq:SuperExponentialBound} and that $|\cK_N| = \exp(o(N^{d-2}))$, see~\eqref{eq:smallCombComplexity}. The term on the right-hand side of \eqref{eq:CoarseGrainingSection4} will be bounded in two different ways, depending on the nature of the `porous interface' generated by $\kappa \in \cK_N$ (recall \eqref{eq:UnionBound}): If the Dirichlet energy of the difference  of $h_{\widehat{\Sigma}}$ (with $\widehat{\Sigma}$ an enlarged version of $\Sigma$, see \eqref{eq:GammaHatDef} below) and $h_{\mathring{A}}$ is smaller than some $\mu > 0$, we will use Proposition \ref{thm:Prop43} below to bound the probability under consideration. For this case, we will use the notation $\kappa \in \cK^\mu_N$. In the situation where the Dirichlet energy of the difference of $h_{\widehat{\Sigma}}$ and $h_{\mathring{A}}$ is larger than $\mu$, namely when $\kappa \in \widetilde{\cK}^\mu_N := \cK_N \setminus \cK^\mu_N$, we will rely on the solidification result for Dirichlet forms (cf.\ \eqref{eq:BoundDirichletForm} in Lemma~\ref{thm:SolidifDirichlet}), to show that the porous interfaces attached to $\kappa\in \widetilde{\cK}^\mu_N$  are unlikely, see~\eqref{eq:BoundSplit_1}. 

 We now introduce some more notation to make this dichotomy for $\kappa \in \cK_N$ explicit. To $\kappa \in \cK_N$, we associate sets of boxes in $\bbR^d$
 \begin{equation}
 \label{eq:GammaHatDef}
 \widehat{\Gamma} = \bigcup_{z \in \cC} (z + [-\tfrac{L_0}{4},\tfrac{5L_0}{4}]^d), \qquad \widehat{\Sigma} = \frac{1}{N}\widehat{\Gamma},
 \end{equation}
 $\widehat{\Gamma}$ being a slightly enlarged version of the $\bbR^d$-filling of $C$ (see \eqref{eq:UnionBound}), and
for $\mu > 0$, we introduce the following partition of $\cK_N$:
\begin{equation}
\label{eq:PartitionK_N}
\begin{split}
\cK_N^\mu & = \lbrace \kappa \in \cK_N;  \cE(h_{\mathring{A}} - h_{\widehat{\Sigma}},h_{\mathring{A}} - h_{\widehat{\Sigma}}) \leq \mu 
\rbrace, \\
\widetilde{\cK}_N^\mu & = \lbrace \kappa \in \cK_N;  \cE(h_{\mathring{A}} - h_{\widehat{\Sigma}},h_{\mathring{A}} - h_{\widehat{\Sigma}}) > \mu 
\rbrace \  ( = \cK_N \setminus \cK^\mu_N).
\end{split}
\end{equation}

The next proposition, which can be seen as a generalization of Proposition \ref{prop:3.3}, will be needed to bound the probability on the right-hand side of \eqref{eq:CoarseGrainingSection4} uniformly over $\kappa \in \cK^\mu_N$. The primary idea is to use a Gaussian field $(\bar{Z}_{f,x})_{f \in \cF, x\in J}$ to encode the event under the probability on the right-hand side of \eqref{eq:CoarseGrainingSection4}. This allows us to use the Borell--TIS inequality to obtain an upper bound, granted that we have bounds on the variance (provided in Lemma~\ref{thm:Lemma44} below) and on the expectation (derived in Lemma~\ref{thm:Lemma45} below) of the infimum (over $f \in \cF$ and $x \in J$) of this field. It is worth stressing that the need of the splitting $\cK_N = \cK_N^\mu\cup \widetilde{\cK}^\mu_N$ enters the proof only in the variance bound (that is, in Lemma~\ref{thm:Lemma44}).
\begin{prop}
\label{thm:Prop43}
Let $\Delta, s > 0$, $\zeta : \bbR^d \rightarrow \bbR $ a smooth compactly supported function and $0 < \beta < \left( \sup_{x \in J} |\langle h_{\mathring{A}}, \zeta(\cdot - x) \rangle| \right)^{-1}$. Then, there exists a $K_0 \geq 100$ and a function $\widetilde{\alpha}'(\cdot)$ dependent on $\beta$ and $\zeta$ with $\lim_K \widetilde{\alpha}'(K) = 1$ such that for $K \geq K_0$ 
\begin{equation}
\begin{split}
\label{eq:Prop4_3_claim}&
\varlimsup_{N \rightarrow \infty} \sup_{\kappa \in \cK_N^\mu} \frac{1}{N^{d-2}}\log  \bbP\Big[ \widetilde{\cA}^{s,\Delta,\zeta}_N \cap \bigcap_{z \in \cC} \{\inf_{D_z} h^z \leq -s \} \Big] \\
& \leq - \varliminf_{N \rightarrow \infty} \inf_{\kappa \in \cK_N^\mu}\frac{( s + \beta \Delta  )^2}{2N^{d-2}} \frac{\capa_{\bbZ^d}(C)}{\widetilde{\alpha}'(K) + \beta^2 c_6(M)E(\zeta) + 2\sqrt{\mu}\beta\sqrt{E(|\zeta|)} \alpha_1(K)}.
\end{split}
\end{equation}
\end{prop}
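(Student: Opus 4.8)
The plan is to re-run the Borell--TIS argument from the proof of Proposition~\ref{prop:3.3}, but with the single test function $\eta$ replaced by the location family $\{\sigma\,\zeta(\cdot-x):x\in J,\ \sigma\in\{-1,+1\}\}$, and to use the restriction $\kappa\in\cK^\mu_N$ (see~\eqref{eq:PartitionK_N}) to tame the cross-term in the variance. Fix $\kappa\in\cK_N$, recall $\cF$ from~\eqref{eq:FunctionClassDef} and $Z_f=\sum_{z\in\cC}\lambda(z)\,h^z(f(z))$ with $\lambda(z)=e_C(B_z)/\capa_{\bbZ^d}(C)$, so $\sum_{z\in\cC}\lambda(z)=1$. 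For $f\in\cF$, $x\in J$ and $\sigma\in\{-1,+1\}$ I would set
\[
\bar Z_{f,x,\sigma}=Z_f\big(1+\sigma\beta\langle h_{\mathring A},\zeta(\cdot-x)\rangle\big)-\sigma\beta\,\langle\bbX_N,\zeta(\cdot-x)\rangle,
\]
a centered Gaussian field, and write $\bar Z=\inf_{f\in\cF}\inf_{x\in J}\inf_\sigma\bar Z_{f,x,\sigma}$. The assumption $\beta<(\sup_{x\in J}|\langle h_{\mathring A},\zeta(\cdot-x)\rangle|)^{-1}$ ensures $1+\sigma\beta\langle h_{\mathring A},\zeta(\cdot-x)\rangle>0$ for all $x,\sigma$. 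Exactly as for~\eqref{eq:crucial} --- on $\bigcap_{z\in\cC}\{\inf_{D_z}h^z\le-s\}$ one picks $f\in\cF$ with $h^z(f(z))\le-s$ for every $z$, hence $Z_f\le-s$; on $\widetilde{\cA}^{s,\Delta,\zeta}_N$ one picks $y\in J$ and $\sigma$ with $\sigma\langle\bbX_N+s\,h_{\mathring A},\zeta(\cdot-y)\rangle\ge\Delta$ --- and using positivity of the multiplier one gets $\bar Z_{f,y,\sigma}\le -s-\beta\Delta$, so that $\widetilde{\cA}^{s,\Delta,\zeta}_N\cap\bigcap_{z\in\cC}\{\inf_{D_z}h^z\le-s\}\subseteq\{\bar Z\le-s-\beta\Delta\}$. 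The Borell--TIS inequality (Theorem~2.1.1 of~\cite{adler2009random}), as in~\eqref{eq:IntermediateStepBTIS}, then bounds the probability in question by $\exp\{-\tfrac1{2\bar\sigma^2}(s+\beta\Delta-|\bbE[\bar Z]|)_+^2\}$, with $\bar\sigma^2=\sup_{f,x,\sigma}\var(\bar Z_{f,x,\sigma})$; it remains to control $|\bbE[\bar Z]|$ and $\bar\sigma^2$ uniformly over $\kappa$.

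For $|\bbE[\bar Z]|=\bbE[\sup_{f,x,\sigma}(-\bar Z_{f,x,\sigma})]$ I would treat the $Z_f$-part and the $\langle\bbX_N,\cdot\rangle$-part separately. The supremum of $\pm Z_f(1+\sigma\beta\langle h_{\mathring A},\zeta(\cdot-x)\rangle)$ over $f\in\cF$, $x\in J$, $\sigma$ is controlled by Dudley's theorem exactly as in~\eqref{eq:boundinfZ_f}--\eqref{eq:BoundCoverCap}, the multiplier being bounded and Lipschitz in $x$; this gives a bound $O(K^{-1})(|\cC|/\capa_{\bbZ^d}(C))^{1/2}\to0$ uniformly by~\eqref{eq:smallerror}. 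The remaining term $\beta\,\bbE[\sup_{x\in J}|\langle\bbX_N,\zeta(\cdot-x)\rangle|]$ is again bounded by a chaining argument: the canonical metric $\rho(x,y)^2=\var(\langle\bbX_N,\zeta(\cdot-x)-\zeta(\cdot-y)\rangle)$ satisfies $\rho(x,y)\le c\,\|\zeta(\cdot-x)-\zeta(\cdot-y)\|_\infty\,N^{(2-d)/2}$ with $\mathrm{diam}(J,\rho)=O(N^{(2-d)/2})$, whence $\bbE[\sup_{x\in J}|\langle\bbX_N,\zeta(\cdot-x)\rangle|]=O(N^{(2-d)/2})$, which vanishes since $d\ge3$. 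Thus $\sup_{\kappa\in\cK_N}|\bbE[\bar Z]|\to0$ and $(s+\beta\Delta-|\bbE[\bar Z]|)_+\to s+\beta\Delta$.

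The variance is the crux, and the only place where the splitting $\cK_N=\cK^\mu_N\cup\widetilde\cK^\mu_N$ is used. Writing $t(x)=\langle h_{\mathring A},\zeta(\cdot-x)\rangle$,
\[
\var(\bar Z_{f,x,\sigma})=(1+\sigma\beta t(x))^2\var(Z_f)-2\sigma\beta(1+\sigma\beta t(x))\,\cG^{(x)}_N+\beta^2\cH^{(x)}_N,
\]
with $\cG^{(x)}_N=\bbE[Z_f\langle\bbX_N,\zeta(\cdot-x)\rangle]$ and $\cH^{(x)}_N=\bbE[\langle\bbX_N,\zeta(\cdot-x)\rangle^2]$. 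The first term is at most $\capa_{\bbZ^d}(C)^{-1}(cK^{2-d}+\gamma(K,L_0))$ by~\eqref{eq:UpperBoundZf}; the third satisfies $N^{d-2}\cH^{(x)}_N\to d\,E(\zeta)$ uniformly in $x\in J$, by Lemma~2.2 of~\cite{bolthausen1993critical} (cf.~\eqref{eq:HNbound}) and translation invariance of $g_{BM}$, and the crude bound $\capa_{\bbZ^d}(C)\le c(M)N^{d-2}$ (from~\eqref{eq:BoxCapBound}, since $C\subseteq B(0,c(M)N)$) lets one pass to the constant $c_6(M)$ in the statement. For the cross-term, expanding as in~\eqref{eq:EstimationGN} and discarding the $\cU$-part via~\eqref{eq:BoundU}--\eqref{eq:UTermVanishes}, one gets $\capa_{\bbZ^d}(C)\,\cG^{(x)}_N=\langle h_{\widehat\Sigma},\zeta(\cdot-x)\rangle+o(1)$ up to $\gamma(K,L_0),\widetilde\gamma(K,L_0)$-factors, uniformly in $x$ and $\kappa$, where $\widehat\Sigma$ is as in~\eqref{eq:GammaHatDef}. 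Since $\zeta$ (equivalently $\pm\zeta$) changes sign, the pointwise solidification bound of Lemma~\ref{prop:harmonic_bound} no longer suffices, and one needs \emph{two-sided} control of $\cG^{(x)}_N$. Here the restriction $\kappa\in\cK^\mu_N$ and the Cauchy--Schwarz estimate~\eqref{eq:CSEnergies} give
\[
\big|\langle h_{\widehat\Sigma}-h_{\mathring A},\zeta(\cdot-x)\rangle\big|^2\le E(\zeta(\cdot-x))\,\cE\big(h_{\widehat\Sigma}-h_{\mathring A},h_{\widehat\Sigma}-h_{\mathring A}\big)\le E(|\zeta|)\,\mu,
\]
so that $|\capa_{\bbZ^d}(C)\,\cG^{(x)}_N-t(x)|\le\sqrt{\mu\,E(|\zeta|)}\,\alpha_1(K)+o(1)$. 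Substituting these estimates into the variance identity, using $|1+\sigma\beta t(x)|\le1+\beta\sup_{x\in J}|\langle h_{\mathring A},\zeta(\cdot-x)\rangle|$, and collecting the $\mu$-free terms (which, by $\gamma,\widetilde\gamma\to1$, are at most $1$ in the limit) into a quantity $\widetilde\alpha'(K)$ with $\lim_K\widetilde\alpha'(K)=1$, one arrives at
\[
N^{d-2}\bar\sigma^2\le\frac{N^{d-2}}{\capa_{\bbZ^d}(C)}\big(\widetilde\alpha'(K)+2\sqrt\mu\,\beta\sqrt{E(|\zeta|)}\,\alpha_1(K)\big)+\beta^2 c_6(M)E(\zeta)
\]
uniformly over $x\in J$ and $\kappa\in\cK^\mu_N$, for $K\ge K_0$. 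Inserting this and the expectation bound into the Borell--TIS estimate, taking $\tfrac1{N^{d-2}}\log$, $\limsup_N$ and $\sup_{\kappa\in\cK^\mu_N}$, and rearranging (multiplying numerator and denominator by $\capa_{\bbZ^d}(C)N^{2-d}$) yields~\eqref{eq:Prop4_3_claim}.

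The main obstacle is the cross-term estimate in the variance. In Proposition~\ref{prop:3.3} the sign condition $\eta\ge0$ made the one-sided solidification bound of Lemma~\ref{prop:harmonic_bound} enough, whereas here $\cG^{(x)}_N$ must be pinched near $\langle h_{\mathring A},\zeta(\cdot-x)\rangle$ from \emph{both} sides and uniformly in $x\in J$; there is no pointwise domination of $h_{\widehat\Sigma}$ by $h_{\mathring A}$, and the substitute is precisely the Dirichlet-energy control built into $\cK^\mu_N$ combined with~\eqref{eq:CSEnergies}, which both forces the dichotomy $\cK_N=\cK^\mu_N\cup\widetilde\cK^\mu_N$ and produces the $\sqrt\mu$-term. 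A secondary technical point is the uniform (in $\kappa$) control of $\bbE[\sup_{x\in J}|\langle\bbX_N,\zeta(\cdot-x)\rangle|]$ through the metric entropy of the compact set $J$.
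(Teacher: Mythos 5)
Your architecture is essentially the paper's: the same auxiliary Gaussian field (your $\sigma$-doubling plays the role of the paper's device of running the argument for $\zeta$ and $-\zeta$), the same containment into $\{\bar Z\le -s-\beta\Delta\}$, the same Borell--TIS step, the same expectation control (Dudley-type bound for the $Z_f$-part, metric entropy over $J$ for $\sup_{x\in J}\langle\bbX_N,\zeta(\cdot-x)\rangle$), and the same exploitation of $\kappa\in\cK^\mu_N$ via the energy Cauchy--Schwarz bound \eqref{eq:CSEnergies} to produce the $\sqrt{\mu}$-term; the final variance bound you write down is the correct one.

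There is, however, a genuine gap in your justification of the cross-term estimate, which is the crux. You assert the two-sided approximation $\capa_{\bbZ^d}(C)\,\cG^{(x)}_N=\langle h_{\widehat{\Sigma}},\zeta(\cdot-x)\rangle+o(1)$ (up to $\gamma,\widetilde{\gamma}$-factors) and then deduce the two-sided pinch around $\langle h_{\mathring{A}},\zeta(\cdot-x)\rangle$ from \eqref{eq:CSEnergies} alone. But the coupling result gives this comparison only one-sidedly: \eqref{eq:liminf_sc} bounds $P_y[H_C<\infty]$ from above by $W_y[H_{\widehat{\Gamma}}<\infty]$, while the lower bound \eqref{eq:limsup_sc} is in terms of the smaller set $\widetilde{\Gamma}$; since the $L_0/4$-enlargement increases the capacity of each well-separated box by a fixed factor, the gap between $h_{\widetilde{\Sigma}}$ and $h_{\widehat{\Sigma}}$ is of order one and is not controlled by $\mu$ (membership in $\cK^\mu_N$, see \eqref{eq:PartitionK_N}, constrains only $\cE(h_{\mathring{A}}-h_{\widehat{\Sigma}})$). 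Consequently, the direction of your pinch that requires a \emph{lower} bound on the discrete hitting probability cannot come from the energy bound; it still needs the pointwise solidification input, namely Lemma \ref{prop:harmonic_bound} applied to the porous interface $\widetilde{\Sigma}$ together with $A'\uparrow\mathring{A}$, i.e.\ \eqref{eq:pointwisesolid}/\eqref{eq:claimGN}, exactly as the paper uses in \eqref{eq:BoundGNNegative} for $\cG^-_{x,N}$ (and symmetrically for the $\zeta^+$-part in your sign convention). Your remark that the solidification bound ``no longer suffices'' is correct insofar as it cannot give both directions, but discarding it altogether leaves one direction unsupported; the correct proof uses both the $\mu$-energy bound (for the $h_{\widehat{\Sigma}}$-side) and pointwise solidification (for the other side). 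A secondary, more technical omission: before invoking \eqref{eq:CSEnergies} you must replace the Riemann sum $\frac{1}{N^d}\sum_y\zeta^{\pm}\big(\tfrac{y-x}{N}\big)h_{\widehat{\Sigma}}\big(\tfrac{y}{N}\big)$ by the integral $\int\zeta^{\pm}(y-x)h_{\widehat{\Sigma}}(y)\,\De y$ uniformly in $\kappa$, which relies on the harmonicity/gradient estimate step \eqref{eq:GradientEstimate}--\eqref{eq:BoundRN} and is not automatic.
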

\begin{proof}
We will essentially perform a modification of the proof of Proposition \ref{prop:3.3}, and use a Gaussian field $(\bar{Z}_{f,x})_{f \in \cF, x \in J}$ as a tool to capture the event under the probability on the right-hand side of \eqref{eq:Prop4_3_claim}. Recall the definitions of $\cF$, $Z_f$ and $\lambda(z)$ from \eqref{eq:FunctionClassDef}, \eqref{eq:DefZ_hat} and below. Similarly to $\widehat{Z}_f$, we introduce, for  $x \in J$ and $f \in \cF$, the zero-average Gaussians
\begin{equation}
\begin{split}
\bar{Z}_{f,x} & = Z_f(1 - \beta \langle  h_{\mathring{A}} ,\zeta(\cdot - x)\rangle) + \beta \langle  \bbX_N, \zeta(\cdot - x) \rangle. 
\end{split}
\end{equation}
We furthermore define 
\begin{equation}
\begin{split}
\bar{Z} & = \inf_{x \in J} \inf_{f \in \cF} \bar{Z}_{f,x}.
\end{split}
\end{equation}
By replacing $\zeta$ by $-\zeta$, the claim will follow once we show that for every $\zeta : \bbR^d \rightarrow \bbR$ smooth and compactly supported the upper bound \eqref{eq:Prop4_3_claim} holds with $\widetilde{\cA}^{s,\Delta,\zeta}_N$ replaced by the event
\begin{equation}
(\widetilde{\cA}')^{s,\Delta,\zeta }_N = \left\lbrace \inf_{y \in J} \langle sh_{\mathring{A}} + \bbX_N, \zeta(\cdot-y) \rangle  \leq -\Delta\right\rbrace. 
\end{equation}
The central observation is that similar as in \eqref{eq:crucial}, one has
\begin{equation}
(\widetilde{\cA}')^{s,\Delta,\zeta }_N \cap \cD_{N,\kappa} \subseteq \{ \bar{Z} \leq -s - \beta \Delta \},
\end{equation}
and thus, we are reduced to bound the probability of the event on the right-hand side, for which we will again employ the Borell--TIS inequality. To this end, bounds on $\bbE[\bar{Z}]$ as well as on $\sup_{x\in J}\sup_{f\in \cF} \var(\bar{Z}_{f,x})$ will be given in the next two lemmas. At this point, the Reader may first read the statement of the next two lemmas, skip their proofs, and directly proceed above \eqref{eq:ConclusionProof43} to see how the proof of Proposition \ref{thm:Prop43} (and thus, of Theorem \ref{thm:MainUpperBound}) is completed. 
\begin{lemma} 
\label{thm:Lemma44}
\begin{equation}
\label{eq:MainVarianceBounds}
\begin{split}
\limsup_{N \rightarrow \infty} \sup_{\kappa \in \cK_N^\mu} &\sup_{x\in J} \sup_{f \in \cF} ( \capa_{\bbZ^d}(C) \var(\bar{Z}_{f,x})) \\& \leq \widetilde{\alpha}'(K) + \beta^2 c_6(M)E(\zeta) + 2\beta\sqrt{\mu}\sqrt{E(|\zeta|)}\alpha_1(K),
\end{split}
\end{equation}
where $\widetilde{\alpha}'(K) \rightarrow 1$ as $K \rightarrow \infty$. 
\end{lemma}
\begin{proof}
The variance of $\bar{Z}_{f,x}$ is given by
\begin{equation}
\begin{split}
  \var(\bar{Z}_{f,x}) &= (1 - \beta \langle h_{\mathring{A}},\zeta(\cdot-x) \rangle)^2 \var(Z_f) \\ &+ 2\beta ( 1 - \beta \langle h_{\mathring{A}}, \zeta(\cdot-x) \rangle) \cG_{x,N} + \beta^2 \cH_{x,N},
\end{split}
\end{equation}
where $\cG_{x,N}$ and $\cH_{x,N}$ are defined as in \eqref{eq:GNHNDefinition} with $\eta$ replaced by $\zeta(\cdot -x)$. In view of \eqref{eq:UpperBoundZf} and \eqref{eq:HNbound}, we only need an \textit{upper} bound on $\capa_{\bbZ^d}(C)\cG_{x,N}$ (with an error term which is uniformly small in $\kappa \in \cK^\mu_N$ and $x \in J$). We decompose $\zeta = \zeta^+ - \zeta^-$ and define
\begin{equation}
\cG_{x,N} = \bbE[Z_f \langle \bbX_N, \zeta^+(\cdot-x) \rangle] - \bbE[Z_f \langle \bbX_N, \zeta^-(\cdot-x) \rangle] \stackrel{\text{def}}{=} \cG_{x,N}^+ - \cG_{x,N}^-.
\end{equation}
Let us first treat $\cG_{x,N}^+$. In a first step, we argue similarly as in \eqref{eq:EstimationGN}--\eqref{eq:UTermVanishes} in order to recover a sum involving the hitting probability $P_y[H_C < \infty]$, apart from a negligible error. We have for $\cU = \bigcup_{z\in \cC} U_z$
\begin{equation}
\label{eq:UpperBoundG}
\begin{split}
  &\cG_{x,N}^+  = \frac{1}{N^d} \sum_{y\in \bbZ^d} \zeta^+\left(\tfrac{y-x}{N}\right) \sum_{z\in \cC} \bbE[h^z(f(z))\varphi_x] \lambda(z)\\
  & \leq \tfrac{1}{\capa_{\bbZ^d}(C) N^d} \sum_{y\in \bbZ^d} \zeta^+\left(\tfrac{y-x}{N}\right) \sum_{z\in \cC} e_C(B_z) g(x,f(z))\\
  & \stackrel{\eqref{eq:HarmonicPot}}{\leq} \tfrac{\gamma(K,L_0)}{\capa_{\bbZ^d}(C) N^d}
   \sum_{y\in \bbZ^d} \zeta^+\left(\tfrac{y-x}{N}\right)
   P_y[H_C <\infty] + \tfrac{1}{\capa_{\bbZ^d}(C) N^d} \sum_{y\in \cU} \zeta^+\left(\tfrac{y-x}{N}\right) e_C(B_{z_y}) g(y,f(z_y)),
   \end{split}
\end{equation}
where $z_y$ is the only element in $\cC$ such that $y\in U_{z_y}$ and where $\gamma(K, L_0)$ was defined in~\eqref{eq:gammaKL}.
We first treat the product of the second summand of the above inequality and $\capa_{\bbZ^d}(C)$ and see that
\begin{equation}
\label{eq:BoundSecondTermG}
\begin{aligned}
 \sup_{\kappa \in \cK_N^\mu} & \sup_{x \in J}\sup_{f\in \cF} \frac{1}{N^d} \sum_{y\in \cU}  \zeta^+\left(\tfrac{y-x}{N}\right) e_C(B_{z_y}) g(y,f(z_y)) \\ &\leq  \sup_{\kappa \in \cK_N^\mu} c \max_{y,y'\in U_0} g(y,y') \|\zeta \|_\infty \left(\frac{L_0}{N}\right)^d \capa_{\bbZ^d}(C)
   \leq c \left(\frac{L_0}{N}\right)^d N^{d-2},
\end{aligned}
\end{equation}
which converges to zero as $N\to \infty$ in view of \eqref{eq:ScalesDefinition}. This shows that the second summand in the last line of \eqref{eq:BoundSecondTermG} yields no contribution in the limit $N \rightarrow \infty$. 

 We now consider the first summand in the last inequality of \eqref{eq:UpperBoundG}. An upper bound will be obtained in three steps, which we shortly explain. In a first (technical) step, we replace $P_y[H_C < \infty]$ by $W_y[H_{\widehat{\Gamma}} < \infty]$ with the help of a strong coupling result (Proposition \ref{prop:strong_coupling}). In a second step, we replace the sum $\frac{1}{N^d} \sum_{y} \zeta^{+}(\tfrac{y-x}{N}) h_{\widehat{\Sigma}}(\tfrac{y}{N})$ uniformly over $\kappa \in \cK_N^\mu$ by an integral, see \eqref{eq:Replace_sum_by_integral}, where we use the harmonicity of $h_{\widehat{\Sigma}}$ outside a small blow-up of $\widehat{\Sigma}$. Finally, we replace $h_{\widehat{\Sigma}}$ by $h_{\mathring{A}}$, using that $\kappa \in \cK_N^\mu$. Note that since we are essentially looking for an \textit{upper} bound of $h_{\widehat{\Sigma}}$ in terms of $h_{\mathring{A}}$ (as opposed to the lower bound in Section 3), we cannot use a pointwise solidification result, see also Remark 4.6, 2). 

Combining \eqref{eq:UpperBoundG}, \eqref{eq:BoundSecondTermG} with \eqref{eq:liminf_sc} and using that $W_y[H_{\widehat{\Gamma}}<\infty] = h_{\widehat{\Sigma}}(\tfrac{y}{N})$ (by scaling and the fact that all points in $\widehat{\Gamma}$ are regular), we find that
 \begin{equation}
 \label{eq:BoundGN_Step1}
 \begin{split}
\sup_{\kappa \in \cK_N^\mu} \sup_{f\in \cF} &(\capa_{\bbZ^d}(C) \cG_{x,N}^+)  \leq \alpha_1(K)   \sup_{\kappa \in \cK_N^\mu} \frac{1}{N^d} \sum_{y \in \bbZ^d} \zeta^+\left(\tfrac{y-x}{N}\right)  h_{\widehat{\Sigma}}\left(\tfrac{y}{N}\right) + \cR^{(1)}_{x,N},
 \end{split}
 \end{equation}
where
 \begin{equation}
 \begin{split}
 \varlimsup_{N \rightarrow \infty} &  \sup_{x \in J} \cR^{(1)}_{x,N} \leq  \alpha_1(K) \| \zeta \|_\infty |J| \varlimsup_{N \rightarrow \infty} \sup_{\kappa \in \cK_N^\mu} \sup_{y\in \bbZ^d} \Big(P_y[ H_{C}<\infty] - W_y[H_{\widehat{\Gamma}}<\infty]\Big) \stackrel{\eqref{eq:liminf_sc}}{\leq} 0,
 \end{split}
 \end{equation}
 with $\alpha_1$ defined in \eqref{eq:Alpha1} (actually, we could also take $\alpha_1(K) =\varlimsup_{N} \gamma(K,L_0)$ as a definition here, but this is not important). 
 As a next step, we aim at replacing the sum in the right member of \eqref{eq:BoundGN_Step1} by an integral, making use of the fact that $h_{\widehat{\Sigma}}$ is harmonic outside $\widehat{\Sigma}$. Let $\widehat{\Sigma}^\ast$ and $\widehat{\Sigma}_2^\ast$ be the enlargements of $\widehat{\Sigma}$ by $\tfrac{L_0}{N}$ and $\tfrac{L_0}{2N}$ respectively, that is
 \begin{equation}
\widehat{\Sigma}^\ast = \{ x \in \bbR^d; d_\infty(x,\widehat{\Sigma}) \leq \tfrac{L_0}{N} \},\quad 
\widehat{\Sigma}_2^\ast = \{ x \in \bbR^d; d_\infty(x,\widehat{\Sigma}) \leq \tfrac{L_0}{2N} \}.
 \end{equation}
By harmonicity of $h_{\widehat{\Sigma}}$, we can make use of an elementary gradient bound (see Theorem 2.10 of \cite{gilbarg2015elliptic}), stating that
\begin{equation}
\label{eq:GradientEstimate}
\sup_{z \in (\widehat{\Sigma}^\ast_2)^c} |\nabla h_{\widehat{\Sigma}}(z)| \leq c \frac{N}{L_0}.
\end{equation}
We proceed by approximating a sum with an integral. Writing
\begin{equation}
\label{eq:Replace_sum_by_integral}
\frac{1}{N^d} \sum_{y \in \bbZ^d} \zeta^+\left( \tfrac{y-x}{N} \right) h_{\widehat{\Sigma}}\left(\tfrac{y}{N} \right) \leq \int \zeta^+(z-x)h_{\widehat{\Sigma}}(z)dz + \cR^{(2)}_{x,N,\kappa},
\end{equation}
where 
\begin{equation}
\begin{split}
\cR^{(2)}_{x,N,\kappa} &= \frac{1}{N^d} \sum_{\frac{y}{N} \in \widehat{\Sigma}^\ast} \zeta^+\left( \tfrac{y-x}{N} \right) h_{\widehat{\Sigma}}\left(\tfrac{y}{N} \right)  + \sum_{\frac{y}{N} \notin \widehat{\Sigma}^\ast}  \int_{\big[\tfrac{y}{N}, \tfrac{y+1}{N}\big)^d}  h_{\widehat{\Sigma}}(z) \left|\zeta^+\left( \tfrac{y-x}{N} \right)  - \zeta^+(z-x) \right| \De z \\
& + \sum_{\frac{y}{N} \notin \widehat{\Sigma}^\ast}  \zeta^+\left( \tfrac{y-x}{N} \right) \int_{\big[\tfrac{y}{N}, \tfrac{y+1}{N}\big)^d}| h_{\widehat{\Sigma}}(z) - h_{\widehat{\Sigma}}\left( \tfrac{y}{N} \right) | \De z,
\end{split}
\end{equation}
we obtain the following bound for $\cR^{(2)}_{x,N} =\sup_{\kappa\in \cK_N^\mu}\cR^{(2)}_{x,N,\kappa}$
\begin{equation}
\label{eq:BoundRN}
 \varlimsup_{N \rightarrow \infty} \sup_{x \in J} \cR^{(2)}_{x,N} \leq \varlimsup_{N \rightarrow \infty} \sup_{\kappa \in \cK_N^\mu} \left( \frac{c|C|}{N^d} \| \zeta \|_{\infty} + \frac{c}{N}\|\nabla \zeta\|_\infty  + \frac{cN}{L_0} \| \zeta \|_\infty \frac{1}{N}\right) = 0,
\end{equation}
where we used  $\bigcup_{y/N\notin \widehat{\Sigma}^\ast} {\big[\tfrac{y}{N}, \tfrac{y+1}{N}\big)^d} \subseteq (\widehat{\Sigma}^\ast_2)^c$ and the gradient estimate \eqref{eq:GradientEstimate} to bound from above the third term together with the fact that $\sup_{\kappa \in \cK_N^\mu} |C| = o(N^d)$, which follows in the same way as the bound \eqref{eq:smallVolumeOfC}. This concludes the second step of the aforementioned procedure, and combining \eqref{eq:BoundGN_Step1}, \eqref{eq:Replace_sum_by_integral} and \eqref{eq:BoundRN} leads to
 \begin{equation}
 \label{eq:BoundGN_Step2}
 \begin{split}
 \sup_{\kappa \in \cK_N^\mu} \sup_{f\in \cF} &(\capa_{\bbZ^d}(C) \cG_{x,N}^+)\leq \alpha_1(K)  \sup_{\kappa \in \cK_N^\mu}  \int \zeta^+(y-x)h_{\widehat{\Sigma}}(y)\De y + \cR^{(1)}_{x,N}+ \cR^{(2)}_{x,N}.
 \end{split}
 \end{equation}
We now proceed to the final step of the variance bound. To this end, we bound 
\begin{equation}\label{eq:energyest}
\left\vert \int \zeta^+(y-x)(h_{\widehat{\Sigma}}(y) - h_{\mathring{A}}(y))\De y \right\vert \leq E(\zeta^+)^{\frac{1}{2}} \cE(h_{\mathring{A}} - h_{\widehat{\Sigma}}, h_{\mathring{A}} - h_{\widehat{\Sigma}})^{\frac{1}{2}},
\end{equation}
using \eqref{eq:CSEnergies} and the fact that $E(\zeta^+(\cdot -x)) = E(\zeta^+)$ for any $x \in \bbR^d$. Combining~\eqref{eq:energyest} and~\eqref{eq:BoundGN_Step2} and noting $E(\zeta^+)\leq E(|\zeta|)$, we obtain
 \begin{equation}
 \label{eq:BoundGN_Step3}
 \begin{split}
  \sup_{\kappa \in \cK_N^\mu} \sup_{f\in \cF} &(\capa_{\bbZ^d}(C) \cG_{x,N}^+)  \leq \alpha_1(K) \left( \langle h_{\mathring{A}}, \zeta^+(\cdot-x) \rangle + \mu^{\frac{1}{2}} E(|\zeta|)^{\frac{1}{2}}\right) + \cR_{x,N}, 
 \end{split}
 \end{equation}
 with $\cR_{x,N} = \cR_{x,N}^{(1)} + \cR_{x,N}^{(2)}$ and $\varlimsup_{N} \sup_{x \in J} \cR_{x,N} \leq 0$. Let us shortly discuss how to proceed for $\cG^-_{x,N}$. By \eqref{eq:claimGN} we see that 
\begin{equation}
\label{eq:BoundGNNegative}
\inf_{\kappa \in \cK^\mu}\inf_{f \in \cF} (\capa_{\bbZ^d}(C) \cG^-_{x,N}) \geq \alpha_2(K) \langle h_{\mathring{A}},\zeta^-(\cdot -x) \rangle + \widetilde{\cR}_{x,N},
\end{equation}
again with $\varliminf_{N} \inf_{x \in J} \widetilde{\cR}_{x,N} \geq 0$.
By combining \eqref{eq:BoundGN_Step3} and \eqref{eq:BoundGNNegative} with the bounds on $\var(Z_{f,x})$ and $\cH_{x,N}$ from \eqref{eq:UpperBoundZf} and \eqref{eq:HNbound}, we finally obtain
\begin{equation}
\begin{split}
\varlimsup_{N \rightarrow \infty} \sup_{\kappa \in \cK_N^\mu}& \sup_{x\in J} \sup_{f \in \cF} ( \capa_{\bbZ^d}(C) \var(\bar{Z}_{f,x})) \\
 & \leq \widetilde{\alpha}'(K) + \beta^2 c_6(M)dE(\zeta) + 2 \beta \alpha_1(K) \sqrt{\mu}\sqrt{E(|\zeta|)}, 
\end{split}
\end{equation}
where 
\begin{equation*}
  \widetilde{\alpha}'(K) = \alpha_1(K) + 2\beta \left(\sup_{x\in J} \langle h_{\mathring{A}},\zeta^+(\cdot-x) \rangle + \sup_{x\in J} \langle h_{\mathring{A}},\zeta^-(\cdot-x) \rangle \right) |\alpha_1(K) - \alpha_2(K)|
\end{equation*}
and the claim of the lemma follows. 
\end{proof}
In the following lemma, we will show that as $N \rightarrow \infty$, the expectation of $\bar{Z}$ vanishes uniformly in $\kappa \in \cK_N^\mu$. This behavior is the same as for $\bbE[\inf_{f\in \cF}\widehat{Z}_f]$ from the previous section, see \eqref{eq:BoundCoverCap}, however the argument is more involved in the present context, since we take another infimum over $x \in J$. 
\begin{lemma}
\label{thm:Lemma45}
For all large enough $K$, one has
\begin{equation}
\label{eq:BoundEZ}
\lim_{N \rightarrow \infty} \sup_{\kappa \in \cK_N^\mu} |\bbE[\bar{Z}]| = 0.
\end{equation}
\end{lemma}
\begin{proof}
Given $\kappa \in \cK_N^\mu$, we fix some $\overline{f}\in \cF$. Then, one has 
\begin{equation}
    \inf_{f\in \cF} \inf_{x\in J} \bar{Z}_{f,x}  \geq 2 \inf_{f\in \cF} \Big(Z_f - Z_{\overline{f}}\Big) + 2 Z_{\overline{f}} \wedge 0
    - \beta \sup_{x\in J} \langle \bbX_N, \zeta(\cdot-x) \rangle.  
\end{equation}
Taking expectations on both sides and rearranging $Z_{\overline{f}}$ leads to 
\begin{equation}\label{eq:exp_bound}
  \bbE\Big[\inf_{f\in \cF} \inf_{x\in J } Z_{f,x}\Big]  \geq 2 \bbE\Big[\inf_{f\in \cF} Z_f \Big] - 2 \bbE\Big[ Z_{\overline{f}} \vee 0 \Big ]
    -  \beta \bbE\Big[ \sup_{x\in J} \langle\bbX_N, \zeta(\cdot-x)\rangle\Big].  
\end{equation}
We can bound the first summand in~\eqref{eq:exp_bound} as in Theorem 4.2 of~\cite{sznitman2015disconnection},
\begin{equation}
 \sup_{\kappa \in \cK_N^\mu}\left\vert \bbE\Big[\inf_{f\in \cF} Z_f \Big] \right\vert \leq \frac{c}{K} \sup_{\kappa \in \cK_N^\mu} \sqrt{ \frac{|\cC|}{\capa_{\bbZ^d}(C)}} \rightarrow 0, \ N \rightarrow \infty,
\end{equation}
using \eqref{eq:BoundCoverCap}. Moreover we notice that the variance bound (4.25) of~\cite{sznitman2015disconnection} yields
\begin{equation}
   \sup_{\kappa \in \cK_N^\mu} \bbE\Big[ Z_{\overline{f}} \vee 0 \Big ] \leq  \sup_{\kappa \in \cK_N^\mu} \Big(\bbP[ Z_{\overline{f}} \geq 0] \var[Z_{\overline{f}} ]\Big)^{1/2} \leq \sup_{\kappa \in \cK_N^\mu} \sqrt{\frac{\alpha_1(K) }{2\capa_{\bbZ^d}(C)}} \rightarrow 0, N \rightarrow \infty,
\end{equation}
by \eqref{eq:BoundCoverCap} and the fact that $|\cC| \rightarrow \infty$ as $N \rightarrow \infty$.
What is left to do is to find an upper bound for the last summand in~\eqref{eq:exp_bound}. This will come as an application of the metric entropy method, outlined in \cite{adler2009random}. We consider the canonical metric induced on $\bbR^d$ by the $L^2(\bbP)$-distance of $\langle \bbX_N, \zeta(\cdot - x) \rangle$, $x \in \bbR^d$, namely $d_{ME} : \bbR^d\times \bbR^d \to [0,\infty)$ defined by
\begin{equation}
\label{eq:CanonicalMetric}
  \begin{split}
    d_{ME}^2(x,y) & =  \bbE\Big[  \langle\bbX_N, \zeta(\cdot-x) -  \zeta(\cdot-y)\rangle^2\Big] \leq \Big(  |x-y| \rho(N)\Big)^2, \ \text{where} \\
    \rho(N) & = \frac{c}{N^d}\bigg(  \sum_{x,y \in (\supp\zeta + J)_N} g(x,y) \bigg)^{\frac{1}{2}},
      \end{split}
\end{equation}
where we defined $(\supp \zeta + J)_N = N(\supp \zeta + J) \cap \bbZ^d$. Notice that $\rho(N) \to 0$ as $N\to\infty$. Moreover, it is easy to see that $J$ can be covered by at most $c_7 \big(\tfrac{\mathrm{diam}(J)}{\varepsilon} \big)^d$ Euclidean balls of radius $\varepsilon > 0$. In view of \eqref{eq:CanonicalMetric}, this implies 
\begin{equation}
  \begin{minipage}{0.7\linewidth}
    $J$ is covered by at most $N(\varepsilon) = c_7 \big(\tfrac{ \rho(N)\mathrm{diam}(J)}{\varepsilon}\big)^d$ balls
    in the canonical metric $d_{ME}$.    
  \end{minipage}
\end{equation} 
By Theorem 1.3.3, p.14 of~\cite{adler2009random}, we obtain that
\begin{equation}
\begin{split}
  \bbE\Big[ \sup_{x\in J} &\langle \bbX_N, \zeta(\cdot-x) \rangle\Big]  \leq c  \int_0^{\mathrm{diam}(J)\rho(N)} \log \bigg(\frac{\rho(N)}{ s }\bigg)^{\frac{1}{2} }\,\De s \\
  & = c\rho(N) \int_0^{\mathrm{diam}(J)} \log(t^{-1})^{\frac{1}{2} }\,\De t \to 0,\quad N\to \infty. \qedhere
  \end{split}
\end{equation}

\end{proof}
We can now conclude the proof of Proposition \ref{thm:Prop43}.
By the Borell--TIS inequality, (see 2.1.1, p.\ 50 of~\cite{adler2009random}) one has - using the bounds \eqref{eq:MainVarianceBounds} and \eqref{eq:BoundEZ} - that 
\begin{equation}
\label{eq:ConclusionProof43}
\begin{split}
&\varlimsup_N \sup_{\kappa \in \cK_N^\mu}  \frac{1}{N^{d-2}} \log \bbP\left[(\widetilde{\cA}')^{s,\Delta,\zeta}_N \cap \cD_{N,\kappa} \right] \leq \varlimsup_N  \sup_{\kappa \in \cK_N^\mu} \frac{1}{N^{d-2}} \log \bbP[\bar{Z}  \leq -s - \beta \Delta] \\
&\leq - \varliminf_N \inf_{\kappa \in \cK_N^\mu} \frac{\left(s + \beta\Delta  \right)^2}{2 N^{d-2}} \frac{\capa_{\bbZ^d}(C)}{\widetilde{\alpha}'(K) + \beta^2 c_6(M) dE(\zeta) + 2 \alpha_1(K) \beta \sqrt{\mu} \sqrt{E(|\zeta|)}},
\end{split}
\end{equation}
where we could omit the term containing $\bbE[\bar{Z}]$ in the last term due to \eqref{eq:BoundEZ}. \end{proof}

We will now carry out the last steps of the proof of Theorem \ref{thm:MainUpperBound}. Set $\alpha + a = \delta< \overline{h}$ and choose $\delta$ so that $\widetilde{\Delta} - \vartheta > 0$, where $\widetilde{\Delta} = \Delta - \sup_{x \in J}(\overline{h}-\delta) \langle \chi_\epsilon(\cdot - x),h_{\mathring{A}} \rangle$. Since $\cK_N = \cK^\mu_N \cup \widetilde{\cK}^\mu_N$, it holds that
\begin{equation}
\label{eq:SplittedVersionBound}
\begin{split}
&\varlimsup_N \frac{1}{N^{d-2}} \log \bbP\left[\widetilde{\cA}^{\overline{h}-\alpha,(\Delta-\vartheta)/|J|,\chi_\epsilon}_N \cap \cD^\alpha_N \right] \leq \\
&\bigg( \varlimsup_{N} \sup_{\kappa \in \cK_N^\mu} \frac{1}{N^{d-2}} \log \bbP[\widetilde{\cA}^{a,(\widetilde{\Delta}-\vartheta)/|J|,\chi_\epsilon}_N \cap \cD_{N,\kappa}] \bigg) \vee \bigg( \varlimsup_N \sup_{\kappa \in \widetilde{\cK}_N^\mu }\frac{1}{N^{d-2}} \log \bbP[\cD_{N,\kappa}] \bigg).
\end{split}
\end{equation}
We will now bound the two terms contributing to the maximum on the right-hand side of \eqref{eq:SplittedVersionBound} separately. Since a solidification result is needed in both cases, the following observation is important: We fix a compact set $A' \subseteq \mathring{A}$ and some $\ell^\ast \geq 0$ (depending on $A,A'$), such that for large $N$ and all $\kappa \in \cK_N$, $d(A',U_1)\geq 2^{-\ell*}$. In particular $W_x[H_{\widehat{\Sigma}} < \tau_{10\widehat{L}_0/N}] \geq c(K)$ for all $x \in \partial U_0$ (again, we refer to (4.48)--(4.54) in~\cite{nitzschner2017solidification}).  Thus, $\widehat{\Sigma}$, and consequently $\Sigma$, is a porous interface for $A'$ and large $N$. 

To find a bound on the second term of~\eqref{eq:SplittedVersionBound}, we will make use of the solidification result for Dirichlet forms, Lemma \ref{thm:SolidifDirichlet}. Indeed, 
\begin{equation}
\label{eq:BoundD_LargeDirichletform}
\begin{split}
\varlimsup_N  \sup_{\kappa \in \widetilde{\cK}^\mu_N} \frac{1}{N^{d-2}} & \log \bbP[\cD_{N,\kappa}]  \leq \varlimsup_N  \sup_{\kappa \in \widetilde{\cK}_N^\mu}\frac{1}{N^{d-2}} \log \bbP\left[\bigcap_{z\in \cC} \{ \inf_{D_z} h^z \leq -a \} \right] \\
&\leq  -\varliminf_N \inf_{\kappa \in \widetilde{\cK}_N^\mu} \frac{1}{2} \bigg(a - \frac{c_3}{K}\sqrt{\tfrac{|\cC|}{\capa_{\bbZ^d}(\widehat{C})}}\bigg)_+^2 \frac{\capa_{\bbZ^d}(\widehat{C})}{\alpha(K)N^{d-2}},
\end{split}
\end{equation}
where $\alpha(K) \rightarrow 1$ as $K \rightarrow \infty$, see (3.26) of \cite{nitzschner2018entropic}, with the difference that we choose $\widehat{C}$ slightly larger than $C$ such that  the $\bbR^d$-filling of $\widehat{C}$ contains $N\widehat{\Sigma}$ (for concreteness take $\widehat{C} = \bigcup_{z \in \cC} (z + [-\tfrac{L_0}{4},\tfrac{5L_0}{4})^d\cap \bbZ^d$). Taking $\liminf_K$ on both sides and using the first inequality of \eqref{eq:liminf}, we obtain that
\begin{equation}
\varliminf_K \varliminf_N \frac{1}{N^{d-2}} \capa_{\bbZ^d}(\widehat{C}) \geq \varliminf_K \varliminf_N \frac{1}{d} \capa(\widehat{\Sigma}).
\end{equation}
Moreover, one also has, with the help of Lemma~\ref{thm:SolidifDirichlet} on the third line, 
\begin{equation}
\label{eq:BoundSigma_LargeDirichletform}
\begin{split}
&\varliminf_N  \inf_{\kappa \in \cK_N^\mu} \capa(\widehat{\Sigma})  = \capa(A') + \varliminf_N \inf_{\kappa \in \cK_N^\mu} (\capa(\widehat{\Sigma}) - \capa(A')) \\
&\ = \capa(A') + \varliminf_N \inf_{\kappa \in \cK_N^\mu} \left( \cE(h_{A'} - h_{\widehat{\Sigma}}) + (\capa(\widehat{\Sigma}) - \capa(A') - \cE(h_{A'} - h_{\widehat{\Sigma}}) \right) \\
&\ \stackrel{\eqref{eq:BoundDirichletForm}}{\geq} \capa(A') +  \varliminf_N \inf_{\kappa \in \cK_N^\mu} \cE(h_{A'} - h_{\widehat{\Sigma}}) \\
&\ \geq \capa(A') + (\cE(h_{\mathring{A}}-h_{A'})^{\frac{1}{2}} - \mu^{\frac{1}{2}})^2,
\end{split}
\end{equation}
where we introduced the shorthand notation $\cE(f) = \cE(f,f)$ and used Lemma \ref{thm:SolidifDirichlet} for the first inequality. Collecting \eqref{eq:BoundD_LargeDirichletform}--\eqref{eq:BoundSigma_LargeDirichletform} and using \eqref{eq:BoundCoverCap}, we have obtained that 
\begin{equation}
\label{eq:BoundSplit_1}
\varlimsup_{K \rightarrow \infty}\varlimsup_{N \rightarrow \infty}   \sup_{\kappa\in\widetilde{\cK}_N^\mu} \frac{1}{N^{d-2}} \log \bbP[\cD_{N,\kappa}] \leq -\frac{a^2}{2d}\Big(\capa(A') + (\cE(h_{\mathring{A}}-h_{A'})^{\frac{1}{2}} - \mu^{\frac{1}{2}})^2\Big).
\end{equation}
Going back to \eqref{eq:SplittedVersionBound}, we now need to bound the first term of the maximum as well. 
Taking $\liminf_K$, using Proposition \ref{thm:Prop43} as well as the arguments around \eqref{eq:liminf} and \eqref{eq:capacitysol}, we see that 
\begin{equation}
\label{eq:BoundSplit_2}
\begin{split}
\varlimsup_{K \rightarrow \infty}\varlimsup_{N \rightarrow \infty}  \sup_{\kappa \in \cK_N^\mu} &\frac{1}{N^{d-2}} \log \bbP[\widetilde{\cA}^{a,(\widetilde{\Delta}-\vartheta)/|J|,\chi_\epsilon}_N \cap \cD_{N,\kappa} ] \\ &\leq - \frac{1}{2d}\Big(a + \beta \tfrac{\widetilde{\Delta} - \vartheta}{|J|} \Big)^2\frac{\capa(A')}{1 + \beta^2c_6(M)dE(\chi_\epsilon) + 2\beta \sqrt{\mu}\sqrt{E(\chi_\epsilon)}}.
\end{split}
\end{equation}
Taking $A' \uparrow \mathring{A}$, we set $\mu = \beta^2$ and then choose $\beta > 0$ small enough such that the right-hand sides of \eqref{eq:BoundSplit_1} and \eqref{eq:BoundSplit_2} are both in absolute value bigger than $\frac{a^2}{2d} \capa(\mathring{A})$. Inserting these bounds back into \eqref{eq:SplittedVersionBound}, one finds that
\begin{equation}
\label{eq:PenultimateStep}
\varlimsup_{K \rightarrow \infty}\varlimsup_{N \rightarrow \infty} \sup_{\kappa \in \cK_N} \frac{1}{N^{d-2}} \log \bbP[\widetilde{\cA}^{\overline{h}-\alpha,|J|^{-1}(\widetilde{\Delta}-\vartheta),\chi_\epsilon}_N \cap \cD_{N,\kappa} ] \leq - \frac{1}{2d}a^2\capa(\mathring{A}) + c_8(E(\chi_\epsilon), \widetilde{\Delta},J,\vartheta).
\end{equation}

The proof is now concluded as follows. We recall the upper bound \eqref{eq:FirstBoundProof}, and find for $\vartheta < \Delta$, that 
\begin{equation}\label{eq:finalstep}
\varlimsup_N \frac{1}{N^{d-2}} \log \bbP\Big[d_J(\bbX_N,\cH^\alpha_{\mathring{A}}) \geq \Delta; \cD^\alpha_N \Big] \leq -\bigg(\frac{c_4\vartheta^2}{8\epsilon^2} \bigg) \wedge \bigg( \frac{a^2}{2d} \capa(\mathring{A}) + c_8(E(\chi_\epsilon), \widetilde{\Delta},J,\vartheta) \bigg).
\end{equation}
Letting $\delta \rightarrow \overline{h}$ and using \eqref{eq:RelationForCapA}, together with $\widetilde{\Delta}\rightarrow \Delta$, we obtain the claim. 
\end{proof}
\begin{remark}
\label{thm:Remark46}
1) The uniform bound on the variance of $\bar{Z}_{f,x}$ in Lemma \ref{thm:Lemma44} is actually more general: Using the fact that $\sup_{\eta \in \lip_1(J)} E(|\eta|) < \infty$, one can also prove a modification of Lemma~\ref{thm:Lemma44} where $\bar{Z}_{f,x}$ is replaced by $\bar{Z}_{f,\eta} = \bar{Z}_{f,\eta}  = Z_f(1 - \beta \langle  h_{\mathring{A}},\eta \rangle) + \beta \langle \bbX_N,\eta \rangle$, and $\sup_{x \in J}$ is replaced by $\sup_{\eta \in \lip_1(J)}$. Such a modification is however not possible in Lemma \ref{thm:Lemma45}, which essentially forces the consideration of the location family $\chi_\epsilon(\cdot - x)$ as a remedy, since the index set $x\in J$ has a lower dimension then $\lip_1(J)$.  \\
2) The lack of a more explicit rate in \ref{thm:MainUpperBound} compared to the `entropic push-down' result of Theorem \ref{thm:pushdown} is not only due to taking the supremum over $\eta \in \lip_1(J)$, but also comes with the fact that the pointwise asymptotic lower bound of $P_x[H_C < \infty]$ by $h_{\mathring{A} }(x/N)$ in \eqref{eq:LiminfBound} and \eqref{eq:pointwisesolid}, which is due to the solidification estimates, is not complemented by a corresponding pointwise upper bound. For this reason, the cases $\kappa \in \cK^\mu_N$ and $\kappa \in \widetilde{\cK}^\mu_N$ had to be treated separately, leading to the implicit derivation of the constant $c_8$ in \eqref{eq:PenultimateStep}. \\
3) Incidentally, a result similar to Theorem \ref{thm:MainUpperBound} may be obtained in the case where the disconnection event $\cD^\alpha_N$ is replaced by the event that an adequately thickened component of the boundary of the closed $| \cdot |_\infty$-ball in $\bbZ^d$ with center $0$ and radius $N$ (this is $S_N$ with the choice $M = 1$ in \eqref{eq:BlowUp_Boundary})  in $E^{\geq \alpha}$ leaves in the box a macroscopic volume in its complement when $\alpha < \overline{h}$, a situation discussed in \cite{sznitman2018macroscopic}. Roughly speaking, a combination of~\eqref{eq:Equality_crit_par}, Theorem 4.1 and Theorem 4.2 of this reference imply that, conditionally on such a `macroscopic hole event', the scaled $\bbR^d$-filling of the left-out set by the component of the boundary of $S_N$ in $E^{\geq \alpha}$ is close to a translate of the Euclidean ball $B$ in an $L^1$-sense (see also Remark 4.3 of the same reference). It might be argued that an approach similar to the one developed to prove the pinning under disconnection in Theorem \ref{thm:MainUpperBound} could be applicable to show that conditionally on the existence of a `macroscopic hole', the local averages of the Gaussian free field are pinned at a multiple of the harmonic potential of the corresponding translate of $B$ in $\bbR^d$. 
\end{remark}

\section{Profile description under disconnection}

In this section, we analyze the local picture of the Gaussian free field conditioned on disconnection and derive a certain `profile' description of the measure $\bbP[\ \cdot \ | \cD^\alpha_N ]$. We define the random measure
\begin{equation}
\bbY_N = \frac{1}{N^d} \sum_{x \in \bbZ^d} \delta_{x/N} \otimes \delta_{\tau_x \varphi}, \qquad \text{ on } \bbR^d \times \bbR^{\bbZ^d},
\end{equation}
where $\bbR^d$ is equipped with its Borel $\sigma$-algebra, $\bbR^{\bbZ^d}$ is equipped with the $\sigma$-algebra generated by the canonical coordinates on $\bbZ^d$ and $\tau_x f = f(\cdot + x)$ denotes the shift by $x \in \bbZ^d$ of the function $f: \bbZ^d \rightarrow \bbR$. \vspace{\baselineskip}

The main result of this section, which comes in Theorem \ref{thm:ProfileDescription} below, provides insight into the asymptotic behavior of $\bbY_N$, when $\cD^\alpha_N$ occurs. Roughly speaking, it accompanies the entropic pinning result of the previous section by the following microscopic description: If $A$ is regular in the sense that $\capa(\mathring{A}) = \capa(A)$, then on a local scale, $\varphi$ retains its structure as a Gaussian free field under $\bbP[ \ \cdot \ |\cD^\alpha_N]$, while its average is pinned at $\cH^\alpha_A$ on a global scale.

Before stating the result, we introduce some necessary notation. For a function $f: \bbZ^d \to \bbR$ we define the probability  measure $\bbP^f$ on $\bbR^{\bbZ^d}$  by
\begin{equation}\label{eq:shifted_GFF}
  \text{$\varphi$ under $\bbP^f$ has the same law as $\varphi + f$ under $\bbP$.}
\end{equation}
We also define for any given measurable $u\,:\, \bbR^d\to \bbR$ the measure $\Phi(u)$ on $\bbR^d\times \bbR^{\bbZ^d}$ by
\begin{equation}\label{eq:shifted measure}
    \Phi(u)(\De x, \De f) = \De x \otimes \bbP^{\mathbbm{1} u(x)}(\De f),
\end{equation}
where $\mathbbm{1}$ denotes the function on $\bbZ^d$ which is constant and equal to one.

A function $F\,:\, \bbR^{\bbZ^d} \to \bbR$ is called \emph{local} if there exists a finite set $\Gamma \subseteq \bbZ^d$ such that $F(f) = F(g)$ whenever $f|_\Gamma = g|_\Gamma$. 
Note that if $F$ is a local function, then there exists a function $\overline{F} : \bbR^\Gamma \rightarrow \bbR$ such that for any $g \in \bbR^{\bbZ^d}$, one has
\begin{equation}
\label{eq:Locality}
F(g) = \overline{F}(g|_\Gamma).
\end{equation}
We say that a local function $F: \bbR^{\bbZ^d} \to \bbR$ is \emph{Lipschitz} with constant $K > 0$ (or $K$-Lipschitz) if
\begin{equation}\label{eq:lip}
  |F(f)-F(g)| \leq K \|f - g\|_{\infty},\quad \text{ for any } f,g \in \bbR^{\bbZ^d}.
\end{equation}
We are ready to present the main result of this section.
\begin{theorem}
\label{thm:ProfileDescription}Let $\Delta > 0$ and $\alpha < \overline{h}$ be fixed. Let  $\eta\,:\,\bbR^d \to \bbR$ be a compactly supported Lipschitz function and $F: \bbR^{\bbZ^d} \to \bbR$ a bounded local Lipschitz function. Then,
\begin{equation}
\begin{split}
\label{eq:StatementProfileThm51}
  \limsup_{N\to \infty}\frac{1}{N^{d-2}} \log \, & \bbP\Big[\big|\langle  \Phi(\cH^\alpha_{\mathring{A}}) -  \bbY_N, \eta\otimes F \rangle\big| \geq \Delta;\,\cD^{\alpha}_N \Big]  \\
  & \leq  - \frac{1}{2d} (\overline{h}-\alpha)^2\capa(\mathring{A}) -c_2(\Delta,\alpha,\eta,F).
  \end{split}
\end{equation}
\end{theorem}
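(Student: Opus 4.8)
The plan is to reduce Theorem~\ref{thm:ProfileDescription} to a combination of the macroscopic pinning result, Theorem~\ref{thm:MainUpperBound}, and a ``local'' concentration estimate showing that the empirical field of shifts is close to a Gaussian free field. First I would decompose the test functional $\langle \Phi(\cH^\alpha_{\mathring{A}}) - \bbY_N, \eta\otimes F\rangle$ by writing
\begin{equation*}
\langle \bbY_N,\eta\otimes F\rangle = \frac{1}{N^d}\sum_{x\in\bbZ^d}\eta(\tfrac{x}{N})\,F(\tau_x\varphi),
\end{equation*}
and introducing the conditional expectation $\bbE^{\cH}[F]$ of $F$ under a Gaussian free field shifted by the local pinning value. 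The natural intermediate object is a coarse approximation in which the shift $\cH^\alpha_{\mathring{A}}(x/N)$ is frozen on mesoscopic blocks of side $\epsilon N$, so that on each such block the relevant object is $\frac{1}{|{\rm block}|}\sum_x F(\tau_x\varphi)$, which one wants to compare with $\bbE^{\mathbbm{1}\cH^\alpha_{\mathring{A}}(x_0/N)}[F]$. Since $F$ is local and Lipschitz and $\eta$ is Lipschitz, the discretization error in freezing the shift and in replacing $\eta(x/N)$ by its block value is $O(\epsilon)$ uniformly, hence negligible after sending $\epsilon\to 0$ at the end.

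The key step is then to establish that, \emph{on the disconnection event}, for each mesoscopic block the block-average $\frac{1}{|{\rm block}|}\sum_x F(\tau_x\varphi)$ is close to $\bbE[F(\tau_x(\varphi + \mathbbm{1}\,h^{B(0,MN)\,c}))]$ (roughly, the field plus its harmonic average outside the box driving disconnection), and that the latter in turn is close to $\bbE^{\mathbbm{1}\cH^\alpha_{\mathring{A}}(x/N)}[F]$ with high probability. For the first closeness, I would use the domain Markov decomposition $\varphi = h^U + \psi^U$ from \eqref{eq:HarmonicAverage}--\eqref{eq:LocalField} with $U$ a large box: conditionally on the harmonic part, $\tau_x\varphi$ is a Gaussian free field shifted by the (slowly varying) harmonic average $h^U_x$, and a law-of-large-numbers/concentration argument over the many $x$ in a mesoscopic block (using that $F$ is bounded and local, so the summands have short-range dependence, controlled via \eqref{eq:GaussianEstimate} applied to a suitable Lipschitz-in-$F$ linearization, or directly via a variance computation and Gaussian concentration for the Lipschitz functional $\varphi\mapsto\frac{1}{|{\rm block}|}\sum_x F(\tau_x\varphi)$) yields that this block-average is within $\Delta$ of $\bbE[\,\overline F(\psi^U + h^U_x)\,]$ off an event of probability $\exp\{-cN^{d}\}$, hence super-exponentially negligible at order $N^{d-2}$. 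Thus one is reduced to controlling $\frac{1}{N^d}\sum_x\eta(\tfrac xN)\big(\bbE[\overline F(\psi^U + h^U_x)] - \overline F\text{-average under }\bbP^{\mathbbm{1}\cH^\alpha_{\mathring{A}}(x/N)}\big)$, i.e.\ to showing that the harmonic average $h^U_x$ is, after integrating against $\eta(x/N)$ and composing with the smooth bounded response $x\mapsto \bbE[\overline F(\psi^U + \cdot\,\mathbbm 1)]$, pinned at $\cH^\alpha_{\mathring{A}}(x/N)$ on the disconnection event.

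This last point is exactly where Theorem~\ref{thm:MainUpperBound} enters: since $F$ is bounded and $K$-Lipschitz, the map $t\mapsto \bbE[\overline F((\psi^U + t\,\mathbbm 1)|_\Gamma)]$ is bounded and Lipschitz, and a Taylor/Lipschitz bound lets one compare $\frac{1}{N^d}\sum_x\eta(\tfrac xN)\,\bbE[\overline F(\psi^U + h^U_x\mathbbm 1)]$ with $\langle \bbX_N,\tilde\eta\rangle$-type averages of the harmonic average $h^U$, for a test function $\tilde\eta$ built from $\eta$ times the derivative of the response; one also uses that $h^U_x$ and $\varphi_x$ differ negligibly after averaging (the local field $\psi^U$ has no macroscopic average). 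Then $\{|\langle \Phi(\cH^\alpha_{\mathring{A}}) - \bbY_N,\eta\otimes F\rangle|\ge \Delta\}\cap\cD^\alpha_N$ is contained, up to the super-exponentially small events above, in an event of the form $\{d_J(\bbX_N,\cH^\alpha_{\mathring{A}})\ge \Delta'\}\cap\cD^\alpha_N$ (with $J$ a compact set containing $\supp\eta$ and $\Delta'$ a fixed multiple of $\Delta/(1+K)$), whose probability is bounded by Theorem~\ref{thm:MainUpperBound} by $\exp\{N^{d-2}(-\tfrac{1}{2d}(\overline h-\alpha)^2\capa(\mathring A) - c_1(\Delta',\alpha,J))\}$. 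Taking $c_2(\Delta,\alpha,\eta,F)$ to be $c_1(\Delta',\alpha,J)$ and letting the mesoscopic parameter $\epsilon\to 0$ gives \eqref{eq:StatementProfileThm51}.

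I expect the main obstacle to be the \emph{local concentration step}: making rigorous, with a super-exponential (faster than $e^{-cN^{d-2}}$) error bound, that the block-averages $\frac{1}{|{\rm block}|}\sum_x F(\tau_x\varphi)$ concentrate around their conditional means uniformly over blocks and over the admissible harmonic-average profiles. The delicate points are (i) the dependence between the summands $F(\tau_x\varphi)$, which is only summably weak because $g$ decays like $|x|^{-(d-2)}$ — so one needs either a block/martingale decomposition or Gaussian concentration for the Lipschitz functional of $\varphi$ with a variance estimate that gains the full volume factor $|{\rm block}|^{-1}$ up to constants; and (ii) ensuring all error terms (discretization of $\eta$, freezing of $\cH^\alpha_{\mathring A}$ on blocks, replacing $P_x[\cdot]$-type harmonic averages by their Brownian counterparts, the $\psi^U$ vs.\ full field discrepancy) are uniformly $O(\epsilon)$ or $o(1)$ and do not interact badly with the conditioning on $\cD^\alpha_N$. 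Once these are in place, the reduction to Theorem~\ref{thm:MainUpperBound} is essentially bookkeeping, and the regular-set corollary $\capa(A)=\capa(\mathring A)$ (so $h_A = h_{\mathring A}$ a.e., hence $\cH^\alpha_A = \cH^\alpha_{\mathring A}$ a.e.) then yields Corollary~\ref{thm:Corollary52} exactly as Corollary~\ref{thm:Corollary42} follows from Theorem~\ref{thm:MainUpperBound}.
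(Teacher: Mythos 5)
Your overall architecture (approximate $\bbY_N$ by a shifted-GFF profile, then reduce the identification of the shift to Theorem~\ref{thm:MainUpperBound} via the location family $\chi_\epsilon(\cdot-x)$) matches the paper's final step, but the key local concentration step is claimed at a rate you cannot get with your decomposition, and this is a genuine gap. You decompose $\varphi = h^U + \psi^U$ with $U$ a \emph{macroscopic} box (you write $h^{B(0,MN)^c}$) and assert that block averages $\frac{1}{|\mathrm{block}|}\sum_x F(\tau_x\varphi)$ concentrate around their conditional means off an event of probability $e^{-cN^d}$. This is false: conditionally on the harmonic part, $\psi^U$ still has covariance $g_U(x,y)\asymp |x-y|^{2-d}$ deep inside $U$, which is \emph{not} summable over $\bbZ^d$ (contrary to your parenthetical remark), so Gaussian concentration for the Lipschitz block functional only has variance proxy of order $(\epsilon N)^{2-d}$ and yields a rate of order $\Delta^2\epsilon^{d-2}N^{d-2}$ — the same order $N^{d-2}$ as the main term, with a constant that is not guaranteed to exceed $\frac{1}{2d}(\overline h-\alpha)^2\capa(\mathring A)+c$ and that in fact \emph{degrades} as $\epsilon\to 0$, exactly when you need $\epsilon$ small to control the freezing error. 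The same problem hits your step ``$\psi^U$ has no macroscopic average'': $\var\big(\frac{1}{N^d}\sum_x\eta(x/N)\psi^U_x\big)\asymp N^{2-d}$, so deviations of fixed size again cost only $e^{-cN^{d-2}}$ with an uncontrolled constant. Since these bad events are not intersected with $\cD^\alpha_N$, every one of them must be beaten \emph{strictly} at rate $N^{d-2}$ by a constant larger than the disconnection cost, which your scheme does not provide.

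The paper's proof removes precisely this obstruction by conditioning not on a macroscopic box but on the values of $\varphi$ on the diluted lattice $L\bbZ^d$ with $L=\lfloor\log N\rfloor$: the conditional local field then has exponentially decaying covariance $g^L(x,y)\le c\exp(-c'|x-y|/L^{d-2})$ (see \eqref{eq:decay}), so the analogue of your concentration step (Proposition~\ref{prop:YclosetoZ}) holds at rate $N^{d-\delta}$ for every $\delta>0$, i.e.\ genuinely super-exponential relative to $N^{d-2}$. A second intermediate comparison (Proposition~\ref{prop:ZclosetoX}, following Bolthausen--Deuschel) replaces the non-constant harmonic shift $\tau_x h$ by the constant shift $\bbX_N^\epsilon(x/N)$, again with super-exponential error — note this also sidesteps the issue in your ``Taylor/Lipschitz'' reduction, where the nonlinearity of $t\mapsto\bbE[\overline F(\cdot+t\mathbbm 1)]$ produces an $L^1$-type distance $\frac{1}{N^d}\sum_x|h_x-\cH^\alpha_{\mathring A}(x/N)|$ that is not controlled by $d_J(\bbX_N,\cH^\alpha_{\mathring A})$ (the sign pattern is random, so no single $\lip_1$ test function captures it). Only after these two super-exponential reductions is Theorem~\ref{thm:MainUpperBound} invoked, exactly as in your final step, through $\sup_{x\in\supp\eta}|\langle\bbX_N-\cH^\alpha_{\mathring A},\chi_\epsilon(\cdot-x)\rangle|$ (Proposition~\ref{prop:XclosetoTarget}). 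So the reduction to the pinning theorem is right, but without the $\log N$-scale conditioning your intermediate estimates live at the critical rate $N^{d-2}$ with constants you cannot force to dominate, and the proof does not close.
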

One has again an important consequence from Theorem \ref{thm:ProfileDescription}, \eqref{eq:DiscLowerBound} and~\eqref{eq:Equality_crit_par}, see~\cite{duminil2019equality}, which follows in the same way as Corollary \ref{thm:Corollary32} and Corollary \ref{thm:Corollary42}:
\begin{corollary}
\label{thm:Corollary52}
Consider $\Delta, \alpha, \eta$ and $F$ as in Theorem \ref{thm:ProfileDescription} and assume that $\capa(A) = \capa(\mathring{A})$. Then, one has 
\begin{equation}
\limsup_{N\to \infty} \frac{1}{N^{d-2}} \log \bbP\Big[\big|\langle \Phi(\cH^\alpha_{A}) -  \bbY_N, \eta\otimes F \rangle\big| \geq \Delta \big\vert \cD^{\alpha}_N \Big] \leq - c_2(\Delta,\alpha,\eta,F).
\end{equation}
\end{corollary}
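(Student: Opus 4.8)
The plan is to deduce Corollary~\ref{thm:Corollary52} from Theorem~\ref{thm:ProfileDescription} together with the disconnection lower bound~\eqref{eq:DiscLowerBound} and the equality of critical parameters~\eqref{eq:Equality_crit_par}, in exactly the manner in which Corollary~\ref{thm:Corollary32} and Corollary~\ref{thm:Corollary42} were obtained. First I would write the conditional probability as a quotient and pass to the exponential scale: using that $\varlimsup_N (a_N - b_N) \le \varlimsup_N a_N - \varliminf_N b_N$ for real sequences, one gets
\begin{equation*}
\begin{aligned}
\varlimsup_{N\to\infty} \frac{1}{N^{d-2}} & \log \bbP\Big[\big|\langle \Phi(\cH^\alpha_A) - \bbY_N, \eta\otimes F\rangle\big| \geq \Delta \,\big|\, \cD^\alpha_N\Big] \\
& \leq \varlimsup_{N\to\infty} \frac{1}{N^{d-2}} \log \bbP\Big[\big|\langle \Phi(\cH^\alpha_A) - \bbY_N, \eta\otimes F\rangle\big| \geq \Delta;\, \cD^\alpha_N\Big] - \varliminf_{N\to\infty} \frac{1}{N^{d-2}} \log \bbP[\cD^\alpha_N].
\end{aligned}
\end{equation*}

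Second I would replace $\cH^\alpha_A$ by $\cH^\alpha_{\mathring A}$ in the first term on the right-hand side. The hypothesis $\capa(A) = \capa(\mathring A)$ forces $h_A = h_{\mathring A}$ Lebesgue-a.e., by the very chain of identities $\cE(h_A - h_{\mathring A}, h_A - h_{\mathring A}) = \capa(A) + \capa(\mathring A) - 2\cE(h_A, h_{\mathring A}) \le \capa(A) - \capa(\mathring A) = 0$ already spelled out in the proof of Corollary~\ref{thm:Corollary32}; hence $\cH^\alpha_A = \cH^\alpha_{\mathring A}$ Lebesgue-a.e. Now observe that from~\eqref{eq:shifted measure} the pairing $\langle \Phi(u), \eta\otimes F\rangle = \int \eta(x)\big(\int F(f)\,\bbP^{\mathbbm{1}u(x)}(\De f)\big)\,\De x$ depends on $u$ only through the map $x \mapsto \int F\,\De\bbP^{\mathbbm{1}u(x)}$ up to Lebesgue-null sets; therefore $\langle \Phi(\cH^\alpha_A), \eta\otimes F\rangle = \langle \Phi(\cH^\alpha_{\mathring A}), \eta\otimes F\rangle$, so the event (and hence the first term above) is the one controlled by Theorem~\ref{thm:ProfileDescription}, giving the bound $-\tfrac{1}{2d}(\overline h-\alpha)^2\capa(\mathring A) - c_2(\Delta,\alpha,\eta,F)$.

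Third, for the denominator I would invoke~\eqref{eq:DiscLowerBound}: since $\alpha < \overline h = h_\ast = h_{\ast\ast}$ by~\eqref{eq:Equality_crit_par} and $\capa(A) = \capa(\mathring A)$, one has $\varliminf_{N\to\infty}\tfrac{1}{N^{d-2}}\log\bbP[\cD^\alpha_N] \ge -\tfrac{1}{2d}(h_{\ast\ast}-\alpha)^2\capa(A) = -\tfrac{1}{2d}(\overline h-\alpha)^2\capa(\mathring A)$. Substituting the two estimates into the displayed inequality, the terms $\mp\tfrac{1}{2d}(\overline h-\alpha)^2\capa(\mathring A)$ cancel and the asserted bound $-c_2(\Delta,\alpha,\eta,F)$ remains. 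I do not anticipate a genuine obstacle here; the only point needing a line of care is the a.e.-identification $h_A = h_{\mathring A}$ and its transfer through $\Phi$, which is immediate from the integral representation of $\langle\Phi(u),\eta\otimes F\rangle$ just recorded.
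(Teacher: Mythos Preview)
Your proposal is correct and follows exactly the approach the paper indicates: it explicitly says Corollary~\ref{thm:Corollary52} ``follows in the same way as Corollary~\ref{thm:Corollary32} and Corollary~\ref{thm:Corollary42}'' by combining Theorem~\ref{thm:ProfileDescription} with~\eqref{eq:DiscLowerBound} and~\eqref{eq:Equality_crit_par}. Your additional line justifying that $\langle \Phi(\cH^\alpha_A),\eta\otimes F\rangle = \langle \Phi(\cH^\alpha_{\mathring A}),\eta\otimes F\rangle$ from the a.e.\ equality $h_A = h_{\mathring A}$ via the integral representation of $\Phi$ is the one detail specific to this corollary, and it is handled correctly.
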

We provide a short outline of the proof of Theorem \ref{thm:ProfileDescription}, which is very much inspired by an argument in~\cite{bolthausen1993critical}, where large deviation estimates for the profile of the one-marginal of the Gaussian free field without conditioning were derived. The basic idea of the proof consists in separating the local picture from the macroscopic behavior by conditioning on the values of the Gaussian free field on a `diluted' lattice $L\bbZ^d$ at the mesoscopic scale $L = \lfloor \log N \rfloor$. Using such a decomposition gives rise to a random measure $Z_N$ on $\bbR^d \times \bbR^{\bbZ^d}$, and we show the closeness of $\bbY_N$ and $\Phi(\cH^\alpha_{\mathring{A}})$  essentially in a three-step procedure: In a first step, we will show that the probability of a large deviation of the measures $\bbY_N$ and $Z_N$, tested against $\eta \otimes F$ is exponentially small at a rate $N^{d-\delta}$, for any $\delta > 0$, see Proposition \ref{prop:YclosetoZ}, which is a modification of Proposition 3.10 of \cite{bolthausen1993critical}. In a second step, we show that the probability of a large deviation of the measures $Z_N$ and $\Phi(\bbX_N^\epsilon)$, tested against $\eta \otimes F$, decays with a rate faster than $N^{d-2}$, where $\bbX_N^\epsilon$ is a regularized version of the measure $\bbX_N$, see Proposition \ref{prop:ZclosetoX}. Finally, we show in Proposition \ref{prop:XclosetoTarget}, by means of Theorem~\ref{thm:MainUpperBound}, that the probability of a large deviation of the measures $\Phi(\bbX_N^\epsilon)$ and $\Phi(\cH^\alpha_{\mathring{A}})$, tested against $\eta \otimes F$ \emph{and} disconnection decays with rate $N^{d-2}$. The claim will then follow by choosing $\epsilon > 0$ small enough. Importantly, the disconnection event only plays a role in the derivation of the last step, while the approximations of $\bbY_N$ by $Z_N$ and of $Z_N$ by $\Phi(\bbX_N^\epsilon)$ in Propositions \ref{prop:YclosetoZ} and \ref{prop:ZclosetoX} are generic and only depend on the Gaussian nature of $\varphi$ under $\bbP$.
\vspace{\baselineskip}

\emph{The field at mesoscopic scale $Z_N$.} We condition the field $\varphi_x$, $x\in\bbZ^d$ on its values on the diluted lattice $L\bbZ^d$, where $L = \lfloor \log N \rfloor$. The field decomposes as
\begin{equation}\label{eq:decomposition}
  \varphi_x  = \psi_x + h_x, \quad x\in \bbZ^d,
\end{equation}
(see~\eqref{eq:HarmonicAverage} and \eqref{eq:LocalField}, with $U = \bbZ^d \setminus L \bbZ^d$) where $\psi_x$ under $\bbP$ is a Gaussian free field with zero boundary conditions on $L \bbZ^d$, independent from $h_x = \bbE[\varphi_x | \varphi_z,\, z\in L\bbZ^d]$. We denote by
\begin{equation}\label{eq:covariances}
  g^L(x,y) = \bbE[\psi_x\psi_y],\quad g^{h}(x,y) = g(x,y) -  g^L(x,y),
\end{equation}
the corresponding covariances.
It is known (see Lemma 3.8 of~\cite{bolthausen1993critical}) that for all $x,y\in \bbZ^d$, one has the upper bound
\begin{equation}\label{eq:decay}
  g^L(x,y) \leq c \exp \Big(-c' |x-y|/L^{d-2} \Big).
\end{equation}
Property~\eqref{eq:decay} is crucial to show that on a local level the field $\psi$ `does not feel' long range effects.

We define $Z_N$ to be the random measure on $\bbR^d\times \bbR^{\bbZ^d}$ given by
\begin{equation}\label{eq:defZ}
  Z_N = \frac{1}{N^d} \sum_{x\in \bbZ^d} \delta_{x/N}\otimes \bbP^{\tau_xh}.
\end{equation}
We now show that the probability that $\bbY_N$ deviates from $Z_N$ (in a weak sense) is exponentially small with decay rate $N^{d-\delta}$ for some $\delta>0$. In particular, the decay is much faster than that of the disconnection probability.

\begin{prop}\label{prop:YclosetoZ} For any $\Delta, \delta>0$, any compactly supported Lipschitz function $\eta\,:\,\bbR^d\to \bbR$ and any bounded local Lipschitz function $F\,:\, \bbR^{\bbZ^d}\to \bbR$, we have
\begin{equation}\label{eq:tightness}
  \lim_{N\to \infty} \frac{1}{N^{d-\delta}} \log \bbP\Big[\big|\langle  \bbY_N  - Z_N, \eta\otimes F \rangle\big| \geq \Delta \Big] = -\infty.
\end{equation}
\end{prop}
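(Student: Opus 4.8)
The plan is to compare $\bbY_N$ and $Z_N$ pathwise and then invoke the Gaussian concentration inequality~\eqref{eq:GaussianEstimate}. First I would use the locality and Lipschitz properties of $F$: let $\Gamma \subseteq \bbZ^d$ be the finite set on which $F$ depends, and $\overline{F}$ as in~\eqref{eq:Locality}. Then $\langle \bbY_N, \eta \otimes F\rangle = \frac{1}{N^d}\sum_x \eta(x/N)\overline{F}((\varphi_{x+y})_{y\in\Gamma})$, whereas $\langle Z_N, \eta\otimes F\rangle = \frac{1}{N^d}\sum_x \eta(x/N)\, \bbE\big[\overline{F}((\varphi_{x+y})_{y\in\Gamma}) \,\big|\, \varphi_z, z\in L\bbZ^d\big]$, using that under $\bbP^{\tau_x h}$ the marginal of $(\varphi_y)_{y\in \Gamma}$ is that of $(h_{x+y} + \psi_{x+y})_{y\in\Gamma}$ under $\bbP$ and that $\psi$ is independent of $h$. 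So the difference $D_N := \langle \bbY_N - Z_N, \eta\otimes F\rangle$ is, conditionally on $\sigma(\varphi_z; z\in L\bbZ^d)$, a centered functional of the field $\psi$ (zero boundary conditions on $L\bbZ^d$), namely $D_N = G(\psi) - \bbE[G(\psi)]$ where $G(\psi) = \frac{1}{N^d}\sum_x \eta(x/N)\overline{F}((h_{x+y}+\psi_{x+y})_{y\in\Gamma})$ with $h$ frozen.

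Second, I would estimate the $L^2$-oscillation of $G$ as a function of the $\psi$-coordinates. Since $F$ is $K$-Lipschitz, $|G(\psi) - G(\psi')| \leq \frac{K\|\eta\|_\infty}{N^d}\sum_x \sum_{y\in\Gamma}|\psi_{x+y} - \psi'_{x+y}| \leq \frac{cK|\Gamma|\|\eta\|_\infty}{N^d}\sum_{w\in(\supp\eta)_N^{+\Gamma}} |\psi_w - \psi'_w|$, so $G$ is Lipschitz with $\ell^1$-type constant $O(N^{-d})$ in each of $O(N^d)$ relevant coordinates. The key point is that, by~\eqref{eq:decay}, the covariance $g^L$ of $\psi$ decays exponentially at range $L^{d-2} = (\log N)^{d-2}$, so the relevant Gaussian covariance matrix $G^L$ restricted to $(\supp\eta)_N^{+\Gamma}$ has $\overline{G^L} = \max_i \sum_j |G^L_{ij}| \leq c(\log N)^{d(d-2)}$ (a sum of exponentially decaying terms over a $d$-dimensional box of radius $\sim L^{d-2}\log N$) and $\tr((G^L)^2) \leq \overline{G^L}\,\tr(G^L) \leq c N^d (\log N)^{d(d-2)}$. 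Writing $D_N$ as (essentially) a linear-in-$\psi$ Gaussian variable after conditioning — more precisely, bounding $\var(D_N \mid \varphi|_{L\bbZ^d})$ by a quadratic form in $G^L$ with the $O(N^{-d})$ Lipschitz weights — one gets $\var(D_N \mid \cdot) \leq c N^{-2d}\cdot N^d \cdot \overline{G^L} \leq c N^{-d}(\log N)^{d(d-2)}$, uniformly in the conditioning. Then the Gaussian tail bound (either~\eqref{eq:GaussianEstimate} applied to a single coordinate, or Borell–TIS / a direct Gaussian estimate for a Lipschitz function of a Gaussian vector) gives $\bbP[|D_N|\geq \Delta \mid \varphi|_{L\bbZ^d}] \leq 2\exp\{-c\Delta^2 N^{d}/(\log N)^{d(d-2)}\}$, and integrating out the conditioning preserves this bound.

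Third, dividing by $N^{d-\delta}$ and taking $N\to\infty$: since $N^d/(\log N)^{d(d-2)} \gg N^{d-\delta}$ for every $\delta>0$, the right-hand side exponent tends to $-\infty$, which is exactly~\eqref{eq:tightness}. The main obstacle I anticipate is the clean reduction of $D_N$ to a tractable Gaussian functional after conditioning — in particular justifying that the conditional variance bound is uniform over the (random) values $\varphi|_{L\bbZ^d}$, which enters only through the frozen $h$ inside the Lipschitz function $\overline F$ and hence drops out of the variance estimate — together with making precise the combinatorial count showing $\overline{G^L} = (\log N)^{O(1)}$ from~\eqref{eq:decay}. None of this is deep, but it is the step where the exponential decay~\eqref{eq:decay} of $g^L$ is genuinely used, and it is the reason the rate is $N^{d-\delta}$ rather than merely $N^{d-2}$; I would follow the scheme of Proposition 3.10 of~\cite{bolthausen1993critical} closely here, with the only new input being that $F$ (rather than a single coordinate) is a bounded local Lipschitz function, handled via~\eqref{eq:Locality}--\eqref{eq:lip}.
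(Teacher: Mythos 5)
There is a genuine gap, and it comes from a misreading of the definition of $Z_N$. You assert that under $\bbP^{\tau_x h}$ the marginal of $(\varphi_y)_{y\in\Gamma}$ is that of $(h_{x+y}+\psi_{x+y})_{y\in\Gamma}$, i.e.\ that $\langle Z_N,\eta\otimes F\rangle$ equals $\frac{1}{N^d}\sum_x\eta(x/N)\,\bbE[F(\tau_x\varphi)\mid \varphi_z, z\in L\bbZ^d]$. That is false: by \eqref{eq:shifted_GFF} and \eqref{eq:defZ}, $\bbE^{\tau_x h}[F]=\bbE[F(\varphi'+\tau_x h)]$ where $\varphi'$ is a \emph{full} Gaussian free field with covariance $g$, whereas the conditional law of $\tau_x\varphi$ given $\varphi|_{L\bbZ^d}$ is Gaussian with mean $\tau_x h$ but covariance $g^L(x+\cdot,x+\cdot)$ (see \eqref{eq:covariances}). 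Consequently $D_N=\langle \bbY_N-Z_N,\eta\otimes F\rangle$ is \emph{not} conditionally centered: besides the fluctuation term you treat, there is a deterministic bias
$\frac{1}{N^d}\sum_x\eta(x/N)\big(\bbE[F(\tau_x h+\tau_x\psi)\mid h]-\bbE^{\tau_x h}[F]\big)$,
which measures the discrepancy between the covariances $g^L(x+\cdot,x+\cdot)$ and $g$ on the window $\Gamma$. Your proposal never addresses it, and it is not automatically negligible: for $x$ within distance $O(1)$ of the diluted lattice $L\bbZ^d$ the two covariances differ by an amount of order $1$. The paper's proof devotes the claim \eqref{eq:claim} precisely to this term: using locality and the Lipschitz property of $F$ one bounds the bias by $c(\Gamma)\max_{y,z\in\Gamma}\frac{1}{N^d}\sum_x|g(y,z)-g^L(x+y,x+z)|^{1/2}$ (cf.\ \eqref{eq:RHS}), then splits the sum into the $o(N^d)$-volume neighborhood $C_N$ of $L\bbZ^d$ from \eqref{eq:boxesC_N} and its complement, where the strong Markov property at $H_{L\bbZ^d}$ together with \eqref{eq:AsymptoticBehaviourGreen} gives $g-g^L\leq c\widehat L^{2-d}\to 0$. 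Once this uniform $o(1)$ bias is subtracted (it is eventually $<\Delta/2$, say), your concentration step applies to the genuinely centered part.

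That remaining part of your argument is essentially the paper's: after conditioning (in the paper, after taking $\sup_f$ over deterministic shifts), the centered functional of $\psi$ is a Lipschitz function of the Gaussian vector $\psi|_{\Gamma^N_\eta}$ with $\ell^1$-type Lipschitz weights $O(N^{-d})$, and the exponential decay \eqref{eq:decay} of $g^L$ at range $L^{d-2}=(\log N)^{d-2}$ gives row sums $\overline{G^L}=(\log N)^{O(1)}$, hence a tail bound $\exp\{-c\Delta^2N^d/(\log N)^{d(d-2)}\}$; the paper reaches the same bound \eqref{eq:bound} via the Gaussian concentration inequality of Lemma A.4 in~\cite{bolthausen1993critical} plus Chernoff, rather than \eqref{eq:GaussianEstimate}, but this is the same idea and both beat every rate $N^{d-\delta}$. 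So the concentration half of your plan is sound; the missing idea is the comparison of $g$ with $g^L$ needed to recenter $D_N$, which is where the choice of the intermediate scale $\widehat L$ and the smallness of $|C_N|/N^d$ genuinely enter.
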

\begin{proof} The proof is inspired by that of Proposition 3.10 of~\cite{bolthausen1993critical} with the difference that in~\cite{bolthausen1993critical} the authors deal only with the one-point marginal of the Gaussian free field. Without loss of generality we assume that $\|\eta\|_\infty\leq 1$ and that $F$ is $1$-Lipschitz.
First note that
\begin{equation}
  \big|\langle \bbY_N -Z_N,\eta\otimes F \rangle\big| \leq \frac{1}{N^d}\biggl| \sum_{x\in \bbZ^d} \eta\left(\frac{x}{N}\right) \Big(F(\tau_x\varphi) - \bbE^{\tau_x h}[F]\Big)\biggr|.
\end{equation}
Using the decomposition $\varphi = \psi + h$ from \eqref{eq:decomposition} and the independence between $\psi$ and $h$, we get
\begin{equation}
\begin{split}\label{eq:step42}
  &\bbP\bigg[\frac{1}{N^d}\biggl| \sum_{x\in \bbZ^d} \eta\left(\frac{x}{N}\right) \Big(F(\tau_x\varphi) - \bbE^{\tau_x h}[F]\Big)\biggr|\geq \Delta\bigg] \\
  &\qquad \leq \sup_{f\in \bbR^{\bbZ^d}}
  \bbP\bigg[\frac{1}{N^d}\biggl| \sum_{x\in \bbZ^d} \eta\left(\frac{x}{N}\right) \Big(F(f+\tau_x\psi) - \bbE^{f}[F]\Big)\biggr|\geq \Delta\bigg].
\end{split}
\end{equation}
We claim the following
\begin{equation}\label{eq:claim}
  \lim_{N\to \infty} \sup_{f\in \bbR^{\bbZ^d}}\frac{1}{N^d}\bigg|\sum_{x\in (\supp \eta)_N}\Big( \bbE^f[F]- \bbE[F(f+\tau_x\psi)]\Big)\bigg| = 0,
\end{equation}
where we recall the notation $(\supp \eta)_N = (N \supp \eta) \cap \bbZ^d$. The justification of~\eqref{eq:claim} will be delayed to the end of the proof. Given~\eqref{eq:claim}, to conclude it suffices to show that, uniformly in $f\in \bbR^{\bbZ^d}$,
\begin{equation}\label{eq:bound}
  \begin{split}
    \bbP\bigg[\frac{1}{N^d}\biggl| \sum_{x\in \bbZ^d} \eta\left(\frac{x}{N}\right) \Big(F(f+\tau_x\psi) - \bbE[F(f+\tau_x\psi)]\Big)\biggr|\geq \Delta\bigg]
    \leq e^{-c N^{d-\delta}},
  \end{split}
  \end{equation}
  for large enough $N$.
Since $F$ is local there exists a set $\Gamma\subseteq \bbZ^d$ finite and $\overline{F} : \bbR^{\Gamma}\to \bbR$ fulfilling the locality property \eqref{eq:Locality}. Set $\Gamma^N_{\eta} = (\supp\eta)_N + \Gamma$ and notice that the functions
\begin{equation}
  G_f^{\pm} : \bbR^{\Gamma^N_{\eta}}\to \bbR,\quad G_f^{\pm}(g) =  \pm\frac{1}{N^d} \sum_{x\in (\supp \eta)_N} \eta\left(\frac{x}{N}\right) \overline{F}\Big(f|_\Gamma+(\tau_x g)|_\Gamma\Big)
\end{equation}
are Lipschitz for any $f\in \bbR^{\bbZ^d}$.
Then, an application of the Gaussian concentration inequality (A.5) from Lemma A.4 of~\cite{bolthausen1993critical} yields  
\begin{equation}
  \begin{split}
    &\bbE\bigg[\exp\biggl(\pm\frac{1}{N^d} \sum_{x\in \bbZ^d} \eta\left(\frac{x}{N}\right) \Big[F(f+\tau_x\psi) - \bbE[F( f+ \tau_x \psi)]\Big] \biggr)\bigg]\\
    &\qquad =  \bbE\Big[\exp\Bigl(G_f^{\pm}(\psi|_{\Gamma^N_\eta}) - \bbE[G_f^{\pm}(\psi|_{\Gamma^N_\eta})]\Big)\Big]\leq \exp\biggl( \frac{c(\Gamma)}{N^{2d}} \sum_{x,y\in \Gamma^N_\eta} |\bbE[\psi_x \psi_y]| \biggr),
  \end{split}
\end{equation}
where we used that $F$ is $1$-Lipschitz and $\|\eta\|_\infty \leq 1$. Using that $\bbE[\psi_x \psi_y] = g^L(x,y)$,~\eqref{eq:decay} and  Chernoff's inequality,~\eqref{eq:bound} follows in a standard way. This combined  with~\eqref{eq:claim} and~\eqref{eq:step42} yields~\eqref{eq:tightness}. 

We are left with proving the claim~\eqref{eq:claim}. Relying on the fact that $F$ is local and $1$-Lipschitz, we  get 
  \begin{align}
    \sup_{f\in \bbR^{\bbZ^d}}\bigg|\frac{1}{N^d} \sum_{x\in (\supp \eta)_N}& \Big(  \bbE^f[F]- \bbE[F(f+\tau_x\psi)]\Big)\bigg| \notag \\ & \leq c(\Gamma) \max_{y,z\in \Gamma}  \frac{1}{N^d} \sum_{x\in (\supp \eta)_N} \Big|g(y,z) - g^L(x+y,x+z)\Big|^{1/2}, \label{eq:RHS}
  \end{align}
uniformly in $f\in \bbR^{\bbZ^d}$. In order to show that the right-hand side goes to zero, we introduce for a sequence $\widehat{L} = o(L)$ depending on $N$, $\widehat{L}\to \infty$, 
\begin{equation}\label{eq:boxesC_N}
C_N = \bigcup_{x\in L\bbZ^d} \Big(x + [-\widehat{L},\widehat{L}]^d\Big) \cap( \supp\eta)_N,
\end{equation}
and observe that $|C_N| = o(N^d)$. Also, for each $x\in ( \supp\eta)_N\setminus C_N$, $y,z\in \Gamma$, using the strong Markov property, we get for large $N$
\begin{equation}
  \begin{aligned}
  g(y,z) &- g^L(x+y, x+z) = E_{x+y}\bigg[\sum_{n=H_{L\bbZ^d}}^\infty \IND_{\{X_n = x+z\}}\bigg] \\ 
  & = E_{x+y}\big[\IND_{\{H_{L\bbZ^d} <\infty\} } g(X_{H_{L\bbZ^d}},x+z)\big] \stackrel{\eqref{eq:AsymptoticBehaviourGreen}}{\leq} c\widehat{L}^{2-d}.
  \end{aligned}
\end{equation}
We can now estimate~\eqref{eq:RHS} by splitting the sum on the right hand side into two sums over over $C_N$ and $( \supp\eta)_N\setminus C_N$. 
\begin{equation*}
  \begin{aligned}
    \max_{z,y\in \Gamma}\frac{1}{N^d} \sum_{x\in (\supp \eta)_N} \Big|g(y,z) - g^L(x+y,x+z)\Big|^{1/2} \leq g(0,0)^{1/2}\frac{|C_N|}{N^d} + c\widehat{L}^{1-d/2}.
  \end{aligned}
\end{equation*}
Taking $N\to \infty$ proves~\eqref{eq:claim} and hence the Proposition.
\end{proof}

\emph{Identification of the global shift.} Using the mesoscopic scale $L = \lfloor \log N \rfloor$, we were able to show that the local picture associated to $\bbY_N$ has a law asymptotically equal to that of a Gaussian free field shifted by the harmonic extension of the Gaussian free field on $L\bbZ^d$. In what follows we show that this shift can be approximated at each point by averaging the Gaussian free field around a macroscopic ball.

To this end, we introduce a smoothing of the random measure $\bbX_N$ on $\bbR^d$ (cf.~\eqref{eq:measureGFF}). For that, let $\chi : \bbR^d \to [0,\infty)$ be a symmetric smooth probability density with support contained in the Euclidean unit ball and for $\epsilon > 0$ let $\chi_\epsilon(x) = \epsilon^{-d} \chi(x/\epsilon)$. Also, we define
\begin{equation}\label{eq:smoothing}
  \bbX_N^\epsilon(x) = \chi_\epsilon \ast \bbX_N(x) = \frac{1}{N^d}\sum_{z\in \bbZ^d} \chi_\epsilon\left(\frac{z}{N}-x\right) \varphi_z.
\end{equation}

In the next proposition we show that the probability that the random measures $\Phi(\bbX_N^\epsilon)$ and $Z_N$ tested against $\eta\otimes F$ deviate from each other is super-exponentially small at rate $N^{d-2}$. 
\begin{prop}\label{prop:ZclosetoX}
Let $\Delta, \epsilon>0$ be fixed. Then, for any compactly supported Lipschitz function $\eta\,:\,\bbR^d\to \bbR$ and any bounded local Lipschitz function $F\,:\, \bbR^{\bbZ^d}\to \bbR$, we have
\begin{equation}
  \lim_{N\to\infty}\frac{1}{N^{d-2}} \log \bbP\Big[\big|\langle  \Phi(\bbX^\epsilon_N) -  Z_N,\eta\otimes F \rangle\big| \geq \Delta\Big] = -\infty.
\end{equation}
\end{prop}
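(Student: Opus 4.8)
The proof follows the pattern of Proposition~\ref{prop:YclosetoZ}: one realises $\langle \Phi(\bbX^\epsilon_N) - Z_N, \eta\otimes F\rangle$ as a functional of the Gaussian field $\varphi$ and applies the Gaussian concentration inequality (A.5) of Lemma~A.4 in~\cite{bolthausen1993critical} after a few deterministic reductions; the disconnection event plays no role here, only the Gaussian nature of $\varphi$ matters. We may assume $\|\eta\|_\infty\leq 1$ and that $F$ is $1$-Lipschitz, and we set $G(a) = \bbE[F(\varphi + a\mathbbm{1})]$ for $a\in\bbR$, which is then bounded and $1$-Lipschitz. With this notation $\langle \Phi(\bbX^\epsilon_N),\eta\otimes F\rangle = \int\eta(x)G(\bbX^\epsilon_N(x))\,\De x$ and $\langle Z_N,\eta\otimes F\rangle = \frac{1}{N^d}\sum_{x\in\bbZ^d}\eta(x/N)\bbE^{\tau_xh}[F]$.

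First I would flatten the mesoscopic shift: if $\Gamma\subseteq\bbZ^d$ is a finite set with $F(g) = \overline F(g|_\Gamma)$ as in~\eqref{eq:Locality}, then $|\bbE^{\tau_xh}[F] - \bbE^{\mathbbm{1}h_x}[F]|\leq \max_{y\in\Gamma}|h_{x+y}-h_x|$, hence
\[
  \Bigl|\langle Z_N,\eta\otimes F\rangle - \tfrac{1}{N^d}\textstyle\sum_x\eta(x/N)G(h_x)\Bigr| \ \leq\ \sum_{y\in\Gamma}\tfrac{1}{N^d}\sum_{x\in(\supp\eta)_N}|h_{x+y}-h_x|.
\]
For fixed $y\in\Gamma$ the right-hand side is a Lipschitz functional of $(\varphi_w)_{w\in L\bbZ^d}$ whose mean tends to $0$ (the increments $h_{x+y}-h_x$ being centred Gaussians with variance $O((\log N)^{-d})$, by harmonicity of $h$ off $L\bbZ^d$) and whose Lipschitz constant is small; combined with $\overline{G} = O(N^2)$ from~\eqref{eq:BoundsCovMatrixGFF} and the Gaussian concentration inequality, it is below any fixed level off a set of probability $\exp(-\omega(N^{d-2}))$. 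A similar (easier) discretisation argument, using that $x\mapsto G(\bbX^\epsilon_N(x))$ and $x\mapsto G(h_{\lfloor Nx\rfloor})$ are Lipschitz in $x$ with constants controlled by $\|\nabla\chi_\epsilon\|_{L^1}$ and by the well-concentrated quantity $N^{-d}\sum_{z:\,z/N\in K}|\varphi_z|$ for a fixed compact $K$, replaces the Riemann sum by an integral and vice versa. After these reductions it remains to control $\int\eta(x)\bigl(G(\bbX^\epsilon_N(x)) - G(h_{\lfloor Nx\rfloor})\bigr)\De x$.

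Since both shifts tend to $0$ in $L^2(\bbP)$, I would expand to first order, writing $G(\bbX^\epsilon_N(x)) - G(h_{\lfloor Nx\rfloor}) = G'(0)\bigl(\bbX^\epsilon_N(x) - h_{\lfloor Nx\rfloor}\bigr) + R_N(x)$ with $|R_N(x)|\leq\tfrac12\|G''\|_\infty(\bbX^\epsilon_N(x)^2 + h_{\lfloor Nx\rfloor}^2)$; integrated against $\eta$, the remainder is a second-order Gaussian chaos with small kernel, whose exponential concentration at rate $\omega(N^{d-2})$ follows from the covariance bounds above, the decay estimate~\eqref{eq:decay}, and~\eqref{eq:GaussianEstimate}. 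The main term is a single centred Gaussian $G'(0)\langle\varphi,\rho_N\rangle$ whose coefficient $\rho_N$ collects the mollifier $\chi_\epsilon$ and the hitting kernel of $L\bbZ^d$; one then estimates $\var(\langle\varphi,\rho_N\rangle) = \sum_{u,v}\rho_N(u)\rho_N(v)g(u,v)$ by splitting the sum into near-diagonal and far contributions and using~\eqref{eq:decay} — equivalently, that after testing against $\eta$ the harmonic average of $\varphi$ on $L\bbZ^d$ and the macroscopic mollification $\bbX^\epsilon_N$ of $\varphi$ differ only by a term of vanishing energy $E(\cdot)$ — to conclude $\var(\langle\varphi,\rho_N\rangle) = o(N^{2-d})$. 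A final application of the Gaussian concentration inequality then gives $\bbP[|G'(0)\langle\varphi,\rho_N\rangle|\geq\Delta] = \exp(-\omega(N^{d-2}))$, and assembling the three steps proves the proposition.

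The main obstacle is the last variance estimate: showing that, tested against $\eta$, the mesoscopic harmonic average of $\varphi$ on $L\bbZ^d$ and the macroscopic mollification $\bbX^\epsilon_N$ agree up to a quantity whose energy $E(\cdot)$ is negligible at order $N^{2-d}$. This is precisely the Bolthausen--Deuschel mechanism for identifying the global shift and requires a careful comparison of the $L\bbZ^d$-hitting kernel with $\chi_\epsilon$ together with sharp use of~\eqref{eq:decay}; everything else reduces to routine Lipschitz bookkeeping feeding into the Gaussian concentration inequality.
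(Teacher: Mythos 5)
Your overall architecture (flatten $\tau_x h$ to the constant shift $\mathbbm{1}h_x$, discretize, then compare $G(\bbX^\epsilon_N(x))$ with $G(h_{\lfloor Nx\rfloor})$, where $G(a)=\bbE[F(\varphi+a\mathbbm{1})]$) matches the paper's reduction $Z_N\to\widetilde Z_N\to\Phi(\bbX^\epsilon_N)$, but at the decisive last step you diverge, and the step you yourself single out as the main obstacle is not correct as stated. You claim that the centred Gaussian $\int\eta(x)\bigl(\bbX^\epsilon_N(x)-h_{\lfloor Nx\rfloor}\bigr)\De x$ has variance $o(N^{2-d})$. For fixed $\epsilon$ this fails: since $\bbE[h_xh_y]=g(x,y)-g^L(x,y)$ by \eqref{eq:covariances} and $g^L$ has only polylogarithmic range by \eqref{eq:decay}, the field $h$ retains the full long-range covariance of $\varphi$, and the computation underlying \eqref{eq:HNbound} gives
\begin{equation*}
\var\Bigl(\int \eta(x)\bigl(\bbX^\epsilon_N(x)-h_{\lfloor Nx\rfloor}\bigr)\De x\Bigr)\;=\;d\,E(\eta\ast\chi_\epsilon-\eta)\,N^{2-d}+o(N^{2-d}),
\end{equation*}
a strictly positive multiple of $N^{2-d}$ whenever $\eta\ast\chi_\epsilon\neq\eta$. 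A Gaussian tail bound on your main term therefore yields decay at rate exactly $N^{d-2}$ with a constant proportional to $E(\eta\ast\chi_\epsilon-\eta)^{-1}$, which becomes super-exponential only in the limit $\epsilon\to 0$, not at fixed $\epsilon$ as you need here. The quadratic remainder has the same defect: $N^{-d}\sum_x h_x^2$ is driven to a value of order one by a macroscopic shift $\varphi\to\varphi+f(\cdot/N)$ of Cameron--Martin cost $O(N^{d-2})$ (the harmonic extension of $f(\cdot/N)$ from $L\bbZ^d$ equals $f(x/N)+o(1)$ because the walk hits $L\bbZ^d$ within $o(N)$ with overwhelming probability), so it is not a small-kernel chaos with concentration at rate $\omega(N^{d-2})$. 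Hence the Taylor-expansion route, as written, does not close.

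By contrast, the paper never commits to such a linear/pointwise comparison: it only uses the Lipschitz bound $|\bbE^{\mathbbm{1}a}[F]-\bbE^{\mathbbm{1}b}[F]|\leq K|a-b|$ to reduce everything to the scalar quantity $N^{-d}\sum_x|\bbX^\epsilon_N(x/N)-h_x|$ plus the crude term $\epsilon^{-d-1}N^{-d-1}\sum_x|\varphi_x|$ in \eqref{eq:ZTildeCloseToX}, controls the latter via \eqref{eq:GaussianEstimate}, and delegates the former to the argument of Lemma 3.12 of \cite{bolthausen1993critical}; your task would be to reproduce that estimate rather than the variance claim above. A secondary, fixable gap: your uniform assertion that $h_{x+y}-h_x$ has variance $O((\log N)^{-d})$ ``by harmonicity'' fails for $x$ within bounded distance of the diluted lattice $L\bbZ^d$, where the increments have variance of order one; the paper handles this by splitting off the $o(N^d)$ sites of $C_N$ in \eqref{eq:boxesC_N} (bounded through \eqref{eq:GaussianEstimate} together with the asymptotics of $G_{1,N}$), and obtains the covariance bound $c\widehat{L}^{-2}g(x,z)$ off $C_N$ from the Harnack and gradient estimates of \cite{lawler2013intersections}, not from a uniform variance bound.
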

\begin{proof}Since the arguments are very close to those in the proof of Lemma 3.12 of~\cite{bolthausen1993critical} we only sketch the proof. In fact, we only need to show that we can reduce to the one-marginal case, which is the one treated in \cite{bolthausen1993critical}. Without loss of generality we can assume that $\|\eta\|_{BL}\leq 1$ and that $F$ is $1$-Lipschitz. We introduce the random measure
\begin{equation}
  \widetilde{Z}_N = \frac{1}{N^d}\sum_{x\in \bbZ^d} \delta_{x/N} \otimes \bbP^{\mathbbm{1} h_x},
\end{equation}
which is a modification of $Z_N$ that allows a better comparison to $\Phi(\bbX_N^\epsilon)$, see \eqref{eq:ZTildeCloseToX} below. We will now show that for any $\delta>0$
\begin{equation}\label{eq:tildeZprofile}
  \lim_{N\to\infty}\frac{1}{N^{d-2}} \log \bbP\Big[\big|\langle  \widetilde{Z}_N -  Z_N,\eta\otimes F \rangle\big| \geq \delta\Big] = -\infty,
\end{equation}
implying that $\widetilde{Z}_N$ is indeed a `good' approximation for $Z_N$ for our purposes. 
Let us denote by $\Gamma \subseteq \bbZ^d$ the finite set associated to the local function $F$.
By the boundedness of $\eta$ and the Lipschitz continuity of $F$, to prove~\eqref{eq:tildeZprofile} it suffices to bound
\begin{equation}\label{eq:step23}
  \begin{split}
    \bbP\Big[\frac{1}{N^d}\sum_{x\in (\supp \eta)_N} &\sup_{y\in \Gamma}\big|h_{x} - h_{x+y}\big| \geq \delta\Big] \leq |\Gamma|\sup_{y\in \Gamma}
    \bbP\Big[\frac{1}{N^d}\sum_{x\in (\supp \eta)_N}\big|h_{x} - h_{x+y}\big| \geq \delta  \Big].
  \end{split}
\end{equation}
As a first step, we split the sum under the probability on the right-hand side of~\eqref{eq:step23} into two sums over $C_N$ and $(\supp \eta)_N\setminus C_N$, where $C_N$ was defined in~\eqref{eq:boxesC_N}. 
Let us start with
\begin{equation}\label{eq:eq23}
  \frac{1}{N^d} \sum_{x\in C_N} |h_{x+y} - h_x| \leq  \frac{1}{N^d} \sum_{x\in C_N} |h_{x+y}| +\frac{1}{N^d} \sum_{x\in C_N} |h_x|.
\end{equation}
We shall focus on $\frac{1}{N^d} \sum_{x\in C_N} |h_x|$ as the other term can be treated similarly. Denote the covariance matrix of the field $h$ over $C_N$ by $G_{1,N} = (g^h(x,y))_{x,y\in C_N}$ (see \eqref{eq:covariances}). In view of $g^h(x,y) \leq g(x,y)$ and~\eqref{eq:BoundsCovMatrixGFF}, one can show that
\begin{equation}
  \tr(G_{1,N})\sim c\big(\tfrac{N \widehat{L} }{L}\big)^d,\quad \overline{G}_{1,N} = O\Big(\big(\tfrac{N \widehat{L} }{L}\big)^2\Big),\quad \tr (G_{1,N}^2) = O\Big(\big(\tfrac{N \widehat{L} }{L}\big)^{d+1}\Big).
\end{equation}
An application of the Gaussian estimate \eqref{eq:GaussianEstimate} allows us to conclude that
\begin{equation}
\label{eq:GaussianEstimateG1}
\begin{split}
& \bbP\Big[\frac{1}{N^d}\sum_{x\in C_N}\big|h_{x}\big| \geq \delta/3\Big]  \leq \bbP\Big[\sum_{x\in C_N}  h_{x}^2 \geq \tfrac{\delta^2}{9}\tfrac{N^{2d}}{|C_N|} \Big] \\
& \qquad \leq \exp\Big\{-\frac{c}{8}\min\Big( \tfrac{\delta^2}{9} N^{d-2} \big(\tfrac{L}{\widehat{L}}\big)^{d+2} , \tfrac{\delta^4}{81}  N^{d-1} \big(\tfrac{L}{\widehat{L}}\big)^{3d+1} \Big)\Big\},
\end{split}
\end{equation}
and thus we obtain that
\begin{equation}\label{eq:partone}
  \lim_{N\to \infty}\frac{1}{N^{d-2}}\log\bbP\Big[\frac{1}{N^d}\sum_{x\in C_N}\big|h_{x}\big| \geq \delta/3\Big]  = -\infty.
\end{equation}
The same calculation can be performed for the case where $h_x$ is replaced by $h_{x+y}$, $y\in \Gamma$.
For the sum over $(\supp \eta)_N\setminus C_N$ we need to investigate the covariances of $(h_{x+y}-h_x)_{x\in (\supp \eta)_N\setminus C_N}$, which we denote by $G_{2,N}$. For any $x,z\in (\supp \eta)_N\setminus C_N$, using the random walk representation \eqref{eq:HarmonicAverage} for $h_x$ and combining the Harnack inequality and the gradient estimates in Theorem 1.7.2 and 1.7.1 of \cite{lawler2013intersections} for $P_\cdot[X_{H_{L\bbZ^d}} = kL]$, which is non-negative and harmonic on $\bbZ^d\setminus L\bbZ^d$, one obtains for $N$ large enough
\begin{equation}
  \begin{aligned}
   |(G_{2,N})_{x,z}| = \Big\vert \bbE & \Big[(h_{x+y}-h_x)(h_{z+y}-h_z)\Big] \Big\vert \\
  &\leq \sum_{k,h\in \bbZ^d} \bbE[\varphi_{kL}\varphi_{hL}] \Big| P_{x+y}[X_{H_{L\bbZ^d}} = kL]- P_{x}[X_{H_{L\bbZ^d}} = kL]
  \Big| \\ &\qquad \qquad\cdot\Big|P_{z+y}[X_{H_{L\bbZ^d}} = hL]- P_{z}[X_{H_{L\bbZ^d}} = hL]\Big| \\ &\leq \frac{c}{\widehat{L}^2} \sum_{k,h\in
  \bbZ^d}\bbE[\varphi_{kL}\varphi_{hL}]P_{x}[X_{H_{L\bbZ^d}} = kL]P_{z}[X_{H_{L\bbZ^d}} = hL]\\
  & = \frac{c}{\widehat{L}^2} \bbE[h_x h_z] \stackrel{\eqref{eq:covariances}}{\leq} \frac{c}{\widehat{L}^2} g(x,z).
  \end{aligned}
  \end{equation}
The above estimate combined with~\eqref{eq:BoundsCovMatrixGFF} yields
\begin{equation}
  \tr(G_{2,N}) = O\Big(\tfrac{N^d}{\widehat{L}^2}\Big),\quad \overline{G}_{2,N} = O\Big(\tfrac{N^2}{\widehat{L}^2}\Big),\quad \tr (G_{2,N}^2) = O\Big(\tfrac{N^{d+1}}{\widehat{L}^4}\Big).
\end{equation}
Again, an application of~\eqref{eq:GaussianEstimate} similar to \eqref{eq:GaussianEstimateG1} allows to conclude that
\begin{equation}\label{eq:partwo}
  \lim_{N\to \infty}\frac{1}{N^{d-2}}\log\bbP\Big[\frac{1}{N^d}\sum_{x\in (\supp \eta)_N\setminus C_N}\big|h_{x+y} - h_{x}\big| \geq \delta/3\Big]  = -\infty.
\end{equation}
A combination of~\eqref{eq:step23},~\eqref{eq:partone},~\eqref{eq:partwo} together with the fact that $\Gamma$ is finite shows~\eqref{eq:tildeZprofile}.

To prove the Proposition, it therefore suffices to show that for $0 <\delta<\Delta$, one has
\begin{equation}\label{eq:stepBatman}
  \lim_{N\to\infty}\frac{1}{N^{d-2}} \log \bbP\Big[\big|\langle  \Phi(\bbX^\epsilon_N) -  \widetilde{Z}_N, \eta\otimes F \rangle\big| \geq \Delta-\delta\Big] = -\infty.
\end{equation}
The fact that $F$ is $1$-Lipschitz and bounded and $\|\eta\|_{BL} \leq 1$  yields the bound
\begin{equation}
\label{eq:ZTildeCloseToX}
\begin{split}
  & \big|\langle \Phi(\bbX^\epsilon_N) -  \widetilde{Z}_N ,\eta\otimes F \rangle\big| \leq \frac{1}{N^d} \bigg|  \sum_{x \in (\supp\eta)_N} \eta\big(\tfrac{x}{N} \big) \bigg(\bbE^{\IND \bbX_N^\epsilon(\tfrac{x}{N})}[F] - \bbE^{\IND h_x}[F] \bigg) \bigg| \\
 & + \bigg| \int \eta(y)\bbE^{\IND \bbX_N^\epsilon(y)}[F]\De y - \frac{1}{N^d} \sum_{x \in (\supp\eta)_N} \eta\big(\tfrac{x}{N} \big) \bbE^{\IND \bbX_N^\epsilon(\tfrac{x}{N})}[F] \bigg|
   \\ &  
  \leq \frac{c}{N^d}\bigg(  \sum_{x\in (\supp\eta)_N} |\bbX^\epsilon_N(x/N) - h_x| + \frac{\epsilon^{-d-1}}{N} \sum_{x\in (\supp\eta)^{\epsilon}_N} |\varphi_x| \bigg)+\frac{c'}{N}.
  \end{split}
\end{equation}One can now proceed exactly as in the proof of Lemma 3.12 of~\cite{bolthausen1993critical} below equation $(3.13)$ to estimate the right-hand side of \eqref{eq:ZTildeCloseToX} and obtain that~\eqref{eq:stepBatman} holds true. 
\end{proof}
The next proposition provides the last step in the approximation procedure, and it gives a large deviation upper bound at rate $N^{d-2}$ on the probability that $\Phi(\bbX_N^\epsilon)$ and $\Phi(\cH^\alpha_{\mathring{A}})$, tested against $\eta \otimes F$,  deviate from each other when the disconnection event $\cD^\alpha_N$ happens. This can be achieved by providing an upper bound on the probability that, uniformly in a compact set, $\bbX_N^\epsilon(x)$ is far from $\cH^\alpha_{\mathring{A}}(x)$. To this end, an application of Theorem~\ref{thm:MainUpperBound} (cf.\ \eqref{eq:LastBoundProp55} below) will be crucial.

\begin{prop}\label{prop:XclosetoTarget}
Let $\Delta > 0$, $\alpha < \overline{h}$ be fixed. Let $\eta:\bbR^d\to \bbR$ be a compactly supported Lipschitz function and $F: \bbR^{\bbZ^d}\to \bbR$ a bounded local Lipschitz function. Then, there exists $\epsilon >0$ such that 
\begin{equation}\label{eq:exp}
  \begin{split}
    \limsup_{N\to\infty}\frac{1}{N^{d-2}} \log \bbP\Big[\big|\langle & \Phi(\bbX^\epsilon_N) -  \Phi(\cH^\alpha_{\mathring{A}}), \eta\otimes F  \rangle\big| \geq \Delta;\,\cD^{\alpha}_N \Big] \\ &\leq 
    -\frac{1}{2d} (\overline{h} - \alpha)^2 \capa(\mathring{A}) - c_9(\Delta,\alpha,\eta,F).      
  \end{split}
\end{equation}
\end{prop}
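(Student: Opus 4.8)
The plan is to reduce the statement involving $\langle \Phi(\bbX^\epsilon_N) - \Phi(\cH^\alpha_{\mathring{A}}), \eta \otimes F\rangle$ to a statement about the closeness of the real-valued profiles $\bbX^\epsilon_N(x)$ and $\cH^\alpha_{\mathring{A}}(x)$ on a compact set, and then invoke Theorem~\ref{thm:MainUpperBound}. First I would observe that, since $F$ is $K$-Lipschitz and $\bbE^{\mathbbm{1} u}[F]$ depends on $u$ in a $K$-Lipschitz way (the shift by $\mathbbm{1} u$ changes every coordinate of the field by exactly $u$), one has
\begin{equation}
\label{eq:plan_reduction}
\big|\langle \Phi(\bbX^\epsilon_N) - \Phi(\cH^\alpha_{\mathring{A}}), \eta \otimes F\rangle\big| \leq K \int |\eta(x)|\, \big|\bbX^\epsilon_N(x) - \cH^\alpha_{\mathring{A}}(x)\big|\, \De x \leq c(\eta,F)\, \|\bbX^\epsilon_N - \cH^\alpha_{\mathring{A}}\|_{L^\infty(J')},
\end{equation}
where $J'$ is a fixed compact neighborhood of $\supp\eta$ containing $\supp \cH^\alpha_{\mathring{A}} \cap \supp\eta$ (note $h_{\mathring A}$ is bounded and one only integrates over $\supp\eta$). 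Thus it suffices to bound the probability that $\sup_{x \in J'}|\bbX^\epsilon_N(x) - \cH^\alpha_{\mathring{A}}(x)| \geq \Delta'$ together with $\cD^\alpha_N$, for an appropriate $\Delta' = \Delta'(\Delta,\eta,F) > 0$.

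Next I would connect this supremum to the $d_J$-metric of Theorem~\ref{thm:MainUpperBound}. The point is that $\bbX^\epsilon_N(x) = \langle \bbX_N, \chi_\epsilon(\cdot - x)\rangle$ and $\cH^\alpha_{\mathring A}(x) = \cH^\alpha_{\mathring A} \ast \chi_\epsilon(x) + (\cH^\alpha_{\mathring A}(x) - \cH^\alpha_{\mathring A}\ast\chi_\epsilon(x)) = \langle \cH^\alpha_{\mathring A}, \chi_\epsilon(\cdot - x)\rangle + r_\epsilon(x)$, where by continuity of $h_{\mathring A}$ off a Lebesgue-null set and dominated convergence $\|r_\epsilon\|_{L^\infty(J')} \to 0$ as $\epsilon \to 0$ — more carefully, since $h_{\mathring A}$ need not be continuous, I would instead bound $\sup_{x\in J'}|r_\epsilon(x)|$ only in an $L^1$ sense, which is enough after passing through \eqref{eq:plan_reduction} since the right member there is really $\int|\eta| |\bbX^\epsilon_N - \cH^\alpha_{\mathring A}|$ and not a genuine sup. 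So one fixes $\epsilon$ small so that $c(\eta,F)\int_{J'}|\cH^\alpha_{\mathring A}\ast\chi_\epsilon - \cH^\alpha_{\mathring A}| < \Delta/2$, and is reduced to
\begin{equation}
\label{eq:plan_Xform}
\bbP\Big[ c(\eta,F)\!\int_{J'} |\langle \bbX_N - \cH^\alpha_{\mathring A}, \chi_\epsilon(\cdot - x)\rangle|\,\De x \geq \tfrac{\Delta}{2};\, \cD^\alpha_N\Big].
\end{equation}
Now $\chi_\epsilon(\cdot - x)$ is, up to the fixed scaling factor $\epsilon^{-d}\|\chi\|_{BL}$, an element of $\lip_1(J'')$ for a slightly larger compact $J''$, so $|\langle \bbX_N - \cH^\alpha_{\mathring A}, \chi_\epsilon(\cdot-x)\rangle| \leq \epsilon^{-d}\|\chi\|_{BL}\, d_{J''}(\bbX_N, \cH^\alpha_{\mathring A})$ uniformly in $x \in J'$. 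Integrating over $J'$ the event in \eqref{eq:plan_Xform} is contained in $\{d_{J''}(\bbX_N, \cH^\alpha_{\mathring A}) \geq \Delta''\}$ with $\Delta'' = \Delta''(\Delta,\eta,F,\epsilon) > 0$, and since $\epsilon$ is now a fixed function of $\Delta,\eta,F$, so is $\Delta''$.

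Finally I would apply Theorem~\ref{thm:MainUpperBound} with $\Delta''$ and $J''$ in place of $\Delta$ and $J$, obtaining
\begin{equation}
\label{eq:plan_conclude}
\limsup_{N\to\infty} \frac{1}{N^{d-2}} \log \bbP\Big[ d_{J''}(\bbX_N, \cH^\alpha_{\mathring A}) \geq \Delta'' ;\, \cD^\alpha_N\Big] \leq -\frac{1}{2d}(\overline h - \alpha)^2 \capa(\mathring A) - c_1(\Delta'', \alpha, J''),
\end{equation}
which is exactly \eqref{eq:exp} upon setting $c_9(\Delta,\alpha,\eta,F) = c_1(\Delta'',\alpha,J'')$ (recall $c_1 > 0$). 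The main obstacle, and the only place requiring genuine care rather than bookkeeping, is the handling of the mollification error $\cH^\alpha_{\mathring A}\ast\chi_\epsilon - \cH^\alpha_{\mathring A}$: because $h_{\mathring A}$ is only lower semicontinuous (continuous off a set of zero capacity, hence zero Lebesgue measure) one must work with $L^1$ convergence rather than uniform convergence, and the reduction \eqref{eq:plan_reduction} must be set up so that only $\int|\eta|\,|\cdot|$ appears — which it does, since $\Phi(\cdot)$ integrates $\bbE^{\mathbbm{1}(\cdot)}[F]$ against $\eta(x)\De x$. Once this is arranged the rest is a chain of elementary inclusions of events combined with the already-established Theorem~\ref{thm:MainUpperBound}.
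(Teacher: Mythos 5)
Your proposal is correct and follows essentially the same route as the paper: reduce via the Lipschitz property of $F$ to an $L^1$-in-$x$ bound on $|\bbX^\epsilon_N - \cH^\alpha_{\mathring{A}}|$ over $\supp\eta$, fix $\epsilon$ so the mollification error $\cH^\alpha_{\mathring{A}}\ast\chi_\epsilon - \cH^\alpha_{\mathring{A}}$ is small in $L^1$ (exactly how the paper handles the lack of continuity of $h_{\mathring{A}}$), then view the normalized translates $\chi_\epsilon(\cdot-x)/\|\chi_\epsilon\|_{BL}$ as elements of $\lip_1$ of an $\epsilon$-enlargement of $\supp\eta$ and apply Theorem~\ref{thm:MainUpperBound}, setting $c_9 = c_1(\Delta'',\alpha,J'')$. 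The only slip is cosmetic: the normalizing factor is $\|\chi_\epsilon\|_{BL}$ (of order $\epsilon^{-d-1}$ from the Lipschitz part), not $\epsilon^{-d}\|\chi\|_{BL}$, which is immaterial since $\epsilon$ is fixed.
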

\begin{proof} Let $K> 0$ be the Lipschitz constant of $F$. One has the bound
\begin{equation}\label{eq:step12}
\Big|\langle \Phi(\bbX^\epsilon_N) -  \Phi(\cH^\alpha_{\mathring{A}}), \eta\otimes F \rangle \Big| \leq K \|\eta\|_\infty  \int_{\supp \eta} |\bbX^\epsilon_N(x) - \cH^\alpha_{\mathring{A}}(x)| \,\De x.
\end{equation}
Since $\chi_\epsilon\ast \cH^\alpha_{\mathring{A}}$ converges locally in $L^1$ to $\cH^\alpha_{\mathring{A}}$ as $\epsilon \to 0$, we can find an $\epsilon > 0$ (depending on $\eta$, $F$, $\alpha$ and $\Delta$) small enough such that
\begin{equation}
 K\|\eta\|_\infty \int_{\supp\eta}|\chi_\epsilon\ast \cH^\alpha_{\mathring{A}}(x)- \cH^\alpha_{\mathring{A}}(x)| \De x \leq \tfrac{\Delta}{2}.
\end{equation}
Without loss of generality, assume that $|\supp \eta| > 0$, then we have the bound
\begin{equation}
\begin{split}
  &\bbP\Big[ K\|\eta\|_\infty \int_{\supp \eta} |\bbX^\epsilon_N(x) - \chi_\epsilon\ast\cH^\alpha_{\mathring{A}}(x)|\De x \geq \tfrac{\Delta}{2};\,\cD^{\alpha}_N  \Big] \\ & \leq
  \bbP\Big[\sup_{x\in\supp \eta} |\bbX^\epsilon_N(x) - \chi_\epsilon\ast\cH^\alpha_{\mathring{A}}(x)| \geq \tfrac{\Delta}{2|\supp\eta|  K\|\eta\|_\infty };\,\cD^{\alpha}_N  \Big].
\end{split}
\end{equation}
Rewriting $\bbX_N^\epsilon(x) = \langle \bbX_N,\chi_\epsilon(\cdot-x) \rangle$ (see~\eqref{eq:smoothing}) as well as $\chi_\epsilon \ast \cH^\alpha_{\mathring{A}}(x) = \langle \cH^\alpha_{\mathring{A}},\chi_\epsilon(\cdot - x) \rangle$, one can use Theorem~\ref{thm:MainUpperBound} to infer 
\begin{equation}
\label{eq:LastBoundProp55}
\begin{split}
\limsup_{N \rightarrow \infty} \frac{1}{N^{d-2}} \log \bbP& \Big[\sup_{x\in\supp \eta} |\langle \bbX_N - \cH^\alpha_{\mathring{A}}, \chi_\epsilon(\cdot - x)\rangle | \geq \tfrac{\Delta}{2|\supp\eta| K\|\eta\|_\infty };\,\cD^{\alpha}_N  \Big] \\
& \leq - \frac{1}{2d}(\overline{h}-\alpha)^2\capa(\mathring{A}) -c_1\big(\tfrac{\Delta}{2|\supp \eta| K \|\eta\|_\infty \|\chi_\epsilon\|_{BL}},\alpha,(\supp \eta)^\epsilon \big),
\end{split}
\end{equation}
where we have used that the functions $\tfrac{1}{\|\chi_\epsilon \|_{BL}} \chi_\epsilon(\cdot-x)$, $x \in \supp\eta$ all belong to $\lip_1((\supp \eta)^\epsilon)$. The claim follows by collecting~\eqref{eq:step12}--\eqref{eq:LastBoundProp55} and setting 
\begin{equation}
c_9(\Delta,\alpha,\eta,F)= c_1\big(\tfrac{\Delta}{2|\supp \eta| K \|\eta\|_\infty \|\chi_\epsilon\|_{BL}},\alpha,(\supp \eta)^\epsilon \big). \qedhere
\end{equation}
\end{proof}

Finally, we have all the elements to deduce Theorem~\ref{thm:ProfileDescription}. 

\begin{proof}[Proof of Theorem~\ref{thm:ProfileDescription}]
Observe that from the triangle inequality, one has
\begin{equation}
\begin{split}
\bbP\Big[\big|\langle \Phi(\cH^\alpha_{\mathring{A}}) -  \bbY_N,\eta\otimes F \rangle\big| \geq \Delta;\,\cD^{\alpha}_N \Big] &\leq
\bbP\Big[\big|\langle \Phi(\cH^\alpha_{\mathring{A}}) - \Phi(\bbX_N^\epsilon),\eta\otimes F \rangle\big| \geq \Delta/3;\,\cD^{\alpha}_N \Big] \\
& + \bbP\Big[\big|\langle \Phi(\bbX_N^\epsilon) - Z_N,\eta\otimes F\rangle\big| \geq \Delta/3\Big]\\
&+ \bbP\Big[\big|\langle Z_N - \bbY_N, \eta\otimes F \rangle\big| \geq \Delta/3\Big].
\end{split}
\end{equation}
Fix $\epsilon>0$ small enough to ensure~\eqref{eq:exp} for $\Delta/3$ in place of $\Delta$.
The result now follows as an easy combination of Proposition~\ref{prop:YclosetoZ} (with the choice $\delta = 2$), Proposition~\ref{prop:ZclosetoX} and Proposition~\ref{prop:XclosetoTarget} with $c_2(\Delta,\alpha,\eta,F) = c_9(\Delta/3,\alpha,\eta,F)$. 
\end{proof}
\begin{remark}
The approach taken to show Theorem \ref{thm:ProfileDescription} is quite general in nature and may be applicable to other situations than disconnection: Suppose that $(E_N)$ is a sequence of events with
\begin{equation}
\liminf_{N \rightarrow \infty} \frac{1}{N^{d-2}} \log \bbP[E_N] \geq - c_{10},
\end{equation}
and suppose that a result equivalent to Theorem \ref{thm:MainUpperBound} holds, that is
\begin{equation}
\limsup_{N \rightarrow \infty} \frac{1}{N^{d-2}} \log \bbP\bigg[ \sup_{\eta \in \lip_1(J)} |\langle \bbX_N - f,\eta \rangle \geq \Delta ; E_N \bigg] \leq - c_{11}(\Delta), \qquad c_{11}(\Delta) > c_{10},
\end{equation}
with a sufficiently regular function $f : \bbR^d \rightarrow \bbR$, then a procedure as in the proof of Theorem \ref{thm:ProfileDescription} will provide a `profile description' for the conditional probability $\bbP[ \ \cdot \ | E_N]$ as well, meaning that for any compactly supported Lipschitz function $\eta : \bbR^d \rightarrow \bbR$ and any bounded local Lipschitz function $F : \bbR^{\bbZ^d} \rightarrow \bbR$, one has 
\begin{equation}
\limsup_{N \rightarrow \infty} \frac{1}{N^{d-2}} \log \bbP\Big[\big|\langle \Phi(f) -  \bbY_N,\eta\otimes F \rangle\big| \geq \Delta \big\vert E_N \Big] < 0.
\end{equation}
\end{remark}

\appendix

\section{Strong coupling lemma}

In this appendix we state and prove Proposition~\ref{prop:strong_coupling} which provides a uniform comparison between the discrete harmonic potential of an arbitrary finite union of discrete $L$-boxes and the Brownian potential of their $\bbR^d$-filling when $L$ converges to infinity quick enough.
The proof relies on a strong coupling result of~\cite{einmahl1989extensions} in the spirit of Koml\'os, Major and Tusn\'ady.
\medskip

We first introduce some notation. We consider integers $L = L(N) \geq 1$, $N\in \bbN$.
We define for a non-empty finite set of points $\cC\subseteq\bbZ^d$ the set
\begin{equation}
  C = \bigcup_{z\in \cC} \Big(z + [0,L)^d \Big )\cap \bbZ^d,
\end{equation}
and its $\bbR^d$-filling, that is the compact set
\begin{equation}
  \Gamma = \bigcup_{z\in \cC} \Big(z + [0,L]^d\Big ).
\end{equation}
Moreover we define the compact sets
\begin{equation}
  \widetilde{\Gamma} =\big\{x\in \Gamma\,;\, d_\infty(x,\partial\Gamma)\geq L/4\big\},\quad \widehat{\Gamma} = \big\{x\in \bbR^d\,;\, d_\infty(x,\Gamma)\leq L/4 \big\}.
\end{equation}
Then, we have the following comparison between the hitting probabilities of $C$ for the discrete random walk and the hitting probabilities of $\widetilde{\Gamma}$ and $\widehat{\Gamma}$ for the Brownian motion. 

\begin{prop}\label{prop:strong_coupling} Assume that $c' N^\eta \leq L \leq c'' N$ for some $\eta>0$. Then, for any fixed integer $M \geq 1$
\begin{equation}\label{eq:limsup_sc}
  \liminf_{N\to\infty} \inf_{\cC \subseteq B(0, M N)} \inf_{x\in\bbZ^d} \Big(P_x[ H_{C}<\infty] - W_x[H_{\widetilde{\Gamma}}<\infty]\Big) \geq 0,
\end{equation}
\begin{equation}\label{eq:liminf_sc}
  \limsup_{N\to\infty} \sup_{\cC \subseteq B(0, M N)} \sup_{x\in\bbZ^d}\Big(P_x[ H_{C}<\infty] - W_x[H_{\widehat{\Gamma}}<\infty]\Big) \leq 0.
\end{equation}
\end{prop}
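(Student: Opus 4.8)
The plan is to derive Proposition~\ref{prop:strong_coupling} from a strong (KMT-type) coupling of the random walk and Brownian motion, together with the deterministic separation between the sets $\widetilde\Gamma \subseteq \Gamma \subseteq \widehat\Gamma$, whose successive enlargements are by a macroscopic amount $L/4$. The point is that $L \geq c' N^\eta \to \infty$, so once the coupled trajectories stay within $o(L)$ of each other, a discrete path that enters $C$ forces the coupled Brownian path into $\widehat\Gamma$, and a Brownian path that enters $\widetilde\Gamma$ forces the coupled walk into $C$ (using that $C$ is the full lattice filling of the boxes $z+[0,L)^d$, so being $|\cdot|_\infty$-close to $\Gamma$ and deep inside it lands you in $C$).

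First I would recall the coupling statement from~\cite{einmahl1989extensions}: on a suitable probability space one can construct the simple random walk $(X_n)$ started at $x$ and a Brownian motion $(Z_t)$ started at $x$ (after the standard time/scale normalization so that $Z$ has the right diffusivity, i.e.\ covariance $\tfrac1d\mathrm{Id}$ per unit time matching $\bbE[X_n] $-increments) such that
\begin{equation}
\label{eq:coupling_bound}
P\Big[\sup_{k \leq n} |X_k - Z_{k}| \geq C_d \log n \Big] \leq C_d\, n^{-10}
\end{equation}
for all $n$ large, with the exponent $10$ replaceable by any fixed power. I would apply this up to a time horizon $n = N^{p}$ for a large fixed $p$ (e.g.\ $p$ chosen so that $N^{p}$ dominates the exit time from $B(0, c N)$ of both processes with overwhelming probability — recall $\cC \subseteq B(0, MN)$, so all relevant hitting happens, if at all, before the processes leave a ball of radius $\sim M N$, and the exit time of such a ball is $O(N^2)$ with probability $1 - e^{-cN^{c}}$ for the walk and similarly for Brownian motion by standard estimates). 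On the complement of the bad coupling event and the bad exit-time event, one has $\sup_{k} |X_k - Z_k| \leq C_d \log n \leq C_d' \log N = o(L)$ since $L \geq c' N^\eta$.

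For the lower bound~\eqref{eq:limsup_sc}: if $H_C < \infty$ for the walk, then $H_C$ occurs before the walk leaves $B(0, cN)$, hence before time $n$, so there is $k \leq n$ with $X_k \in C \subseteq \Gamma$; on the good event $|Z_k - X_k|_\infty \leq |Z_k - X_k| = o(L) \leq L/4$ for $N$ large, so $Z_k \in \Gamma^{L/4\text{-nbhd}} \subseteq \widehat\Gamma$, giving $H_{\widehat\Gamma} < \infty$ for Brownian motion. Wait — that is the wrong direction for~\eqref{eq:limsup_sc}; there one wants $W_x[H_{\widetilde\Gamma}<\infty]$ to be the \emph{smaller} quantity. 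So instead: if $H_{\widetilde\Gamma} < \infty$ for the Brownian motion, it occurs before $Z$ leaves $B(0,cN)$, hence (after undoing the time change) before time $n$; pick $t \leq n$ with $Z_t \in \widetilde\Gamma$, let $k = \lfloor t \rfloor$, then $|X_k - Z_t| \leq |X_k - Z_k| + |Z_k - Z_t| = o(L) + (\text{Brownian oscillation over unit time})$, and by a further (easy, exponential) estimate the Brownian increment over a unit time interval is $\leq L/8$ off an event of probability $e^{-cL^2} = o(n^{-10})$; since $d_\infty(Z_t, \partial\Gamma) \geq L/4$, we get $X_k \in \Gamma$ at $|\cdot|_\infty$-distance $\geq L/4 - o(L) > 0$ from $\partial\Gamma$, hence $X_k$ lies in the interior lattice filling, i.e.\ $X_k \in C$. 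Therefore $P_x[H_C < \infty] \geq W_x[H_{\widetilde\Gamma} < \infty] - (\text{coupling error} + \text{exit-time error} + \text{oscillation error})$, and all three errors are $o(1)$ uniformly in $x \in \bbZ^d$ and $\cC \subseteq B(0,MN)$, which gives~\eqref{eq:limsup_sc}. The mirror argument — starting from $H_C < \infty$, transferring to $H_{\widehat\Gamma} < \infty$ — gives~\eqref{eq:liminf_sc}, using now that $\widehat\Gamma$ is the $L/4$-enlargement of $\Gamma$ so an $o(L)$ error cannot take us outside it.

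The main obstacle is making the reduction to a bounded time horizon fully uniform in $\cC$ and in the starting point $x$: one must argue that for $x$ far from $B(0,MN)$ the hitting probabilities on both sides are uniformly small (comparable, both $\lesssim (\mathrm{dist}(x,B(0,MN))/N)^{2-d}$ by standard potential-theoretic bounds), so the inequality is trivial there, while for $x$ within $O(N)$ of the boxes the coupling up to time $N^p$ captures the entire relevant hitting behavior with overwhelming probability. A secondary technical point is bookkeeping the time-change and the precise normalization in~\cite{einmahl1989extensions} so that the Brownian motion appearing is exactly the one in~\eqref{eq:HarmonicPot}; this is routine but must be stated carefully. Everything else — the $L/4$ buffers absorbing the $\log N$ coupling error, the exponential tail of Brownian oscillation over unit time, and the exit-time estimates for a ball of radius $O(N)$ — is standard and I would only sketch it.
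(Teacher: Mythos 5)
Your overall strategy is exactly the paper's: an Einmahl (KMT-type) coupling whose $O(\log n)$ error over a polynomial number of steps is swallowed by the $L/4$ buffers (since $L\geq c'N^{\eta}$), a rounding of the Brownian hitting time to the discrete time grid with an exponential bound on the unit-time oscillation, and a separate, trivial treatment of starting points far from $B(0,MN)$ via transience, with uniformity in $\cC$ and $x$ coming from translation/scaling invariance.

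There is, however, one genuine flaw in your reduction to a finite time horizon. You justify truncating at time $n=N^{p}$ by claiming that ``all relevant hitting happens, if at all, before the processes leave a ball of radius $\sim MN$,'' and then bounding the exit time of that ball. This inference is false: $H_C<\infty$ (or $H_{\widetilde{\Gamma}}<\infty$) only forces the \emph{position at the hitting time} to lie in the ball, not the hitting time to precede the first exit time --- the process may leave the ball and return much later. Consequently the truncation error is not $1-e^{-cN^{c}}$ as you assert, and your argument as written does not control $\bbP$-mass of paths that hit only after time $N^{p}$. The paper fixes this by bounding the truncation error through the \emph{last visit} time: for $x\in B(0,RN)$, $W_x[H_{\widetilde{\Gamma}}<\infty]\leq W_x[H_{\widetilde{\Gamma}}\leq N^{2+\epsilon}]+W_x[L_{B(x,2NR)}>N^{2+\epsilon}]$, and the second term equals $W_0[L_{B(0,2R)}>N^{\epsilon}]$ by scaling, which tends to $0$ (only as $o(1)$, but that suffices) uniformly in $x$ and $\cC$ by transience; an equivalent repair in your setup is to apply the Markov property at time $N^{p}$ and use a capacity/Green-function bound to show the post-$N^{p}$ hitting probability is uniformly $o(1)$. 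With that substitution (and the paper's device of fixing $R$, letting $N\to\infty$, then $R\to\infty$ for the far-field term), your proof goes through and coincides with the one in the appendix.
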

\begin{proof} The argument for this proposition is largely inspired by the proof of Proposition 4.1 of~\cite{nitzschner2018entropic}. We will only show~\eqref{eq:limsup_sc} as~\eqref{eq:liminf_sc} can be proven in a similar way interchanging the roles of the simple random walk and Brownian motion. 
  
Fix $R>M+c''$ so  that $\widetilde{\Gamma}  \subseteq B_\infty(0, R N)$ for all $\cC\subseteq B(0,M N)$. We can write
\begin{equation}\label{eq:sc1}
  \begin{aligned}
    \inf_{\cC \subseteq B(0, M N)}\inf_{x\in\bbZ^d} &\Big( P_x[ H_{C}<\infty] - W_x[H_{\widetilde{\Gamma}}<\infty]\Big) \\
    & \geq \left( \inf_{\cC \subseteq B(0, M N)}\inf_{x\in B(0,R N)} \Big( P_x[ H_{C}<\infty] - W_x[H_{\widetilde{\Gamma}}<\infty]\Big) \right) \\ & \wedge \Big( -  \sup_{z\in B_\infty(0,R)^c} W_z[H_{B(0,M+c'')}<\infty] \Big),
  \end{aligned}
\end{equation}
where we used scaling invariance of the Brownian motion to obtain the term in the second line. Note that the second member of the minimum does not depend on $N$ and converges to zero as $R \rightarrow \infty$ using the transience of Brownian motion. We will now show that the limit of the first member of the minimum as $N$ grows to infinity is non-negative.

For a closed set $F \subseteq \bbR^d$ we denote by $L_F = \sup\{0<t<\infty; Z_t\in F\}$ the time of last visit of the Brownian motion to $F$ (using the convention that $L_F = 0$ if the set on the right-hand side is empty). 
Clearly, for all $x\in B(0, R N)$ and all $\cC\subseteq B(0,MN)$, we have $\widetilde{\Gamma}\subseteq B(x,2NR)$. Thus, for any fixed  $\epsilon>0$ 
\begin{equation}\label{eq:sc2}
  W_x[H_{\widetilde{\Gamma}}<\infty]\leq W_x[H_{\widetilde{\Gamma}}\leq N^{2+\epsilon}] + W_x [L_{B(x,2NR)}> N^{2+\epsilon}],
\end{equation}
where the second summand on the right-hand side converges to $0$ as $N\to \infty$ uniformly in $x\in B(0, R N)$ and $\cC\subseteq B(0,MN)$. In fact, $W_x [L_{B(x,2NR)}> N^{2+\epsilon}] = W_0 [L_{B(0,2R)}> N^{\epsilon}] \to 0$ as $N\to \infty$ by scaling invariance and transience of the Brownian motion. Similar to~\cite[Proposition 4.1]{nitzschner2018entropic}, we define $\widehat{H}_{\widetilde{\Gamma}}$ to be the smallest integer multiple of $1/d$ bigger or equal than $H_{\widetilde{\Gamma}}$. Applying the strong Markov property at $H_{\widetilde{\Gamma}}$ and using translation invariance we get
\begin{equation}\label{eq:sc3}
W_x[H_{\widetilde{\Gamma}}\leq N^{2+\epsilon}] \leq W_x\Big[H_{\widetilde{\Gamma}}\leq N^{2+\epsilon}, |Z_{H_{\widetilde{\Gamma}}}-Z_{\widehat{H}_{\widetilde{\Gamma}}}|_{\infty}\leq \frac{L}{8}\Big] + W_0\Big[\sup_{0\leq t\leq 1/d}|Z_t|_{\infty}\geq \frac{L}{8}\Big].
\end{equation}
Note that  $W_0[\sup_{0\leq t\leq 1/d}|Z_t|_{\infty}\geq L/8]\to 0$ as $N\to \infty$. By Theorem 4 of~\cite{einmahl1989extensions} and the fact that $L\geq c'N^\eta$ for some $\eta>0$, there exists a probability space $(\overline{\Omega},\overline{\cF},\overline{P})$, a simple random walk $(\overline{X}_n)_{n\geq 0}$ on $\bbZ^d$ and a Brownian motion $(\overline{Z}_t)_{t\geq 0}$ on $\bbR^d$, both started at $x$, such that
\begin{equation}\label{eq:sc4}
  \overline{P}\Big[\max_{1\leq k\leq 2d N^{2+\epsilon}} | \overline{X}_k - \overline{Z}_{k/d}|_\infty < \frac{L}{8}\Big] \to 0, \quad \text{as $N\to \infty$.}
\end{equation}  
Denote by $H_{\widetilde{\Gamma}}^{\overline{Z}}$ and $H_{C}^{\overline{X}}$ the entrance times in $\widetilde{\Gamma}$ and $C$, associated to $\overline{Z}_\cdot$ and $\overline{X}_\cdot$ respectively. 
By combining~\eqref{eq:sc2}--\eqref{eq:sc4}, we obtain
\begin{equation}\label{eq:sc5}
  \begin{aligned}
    W_x[H_{\widetilde{\Gamma}}< \infty] &\leq \overline{P}\Big[H_{\widetilde{\Gamma}}\leq N^{2+\epsilon}, \max_{1\leq k\leq 2d N^{2+\epsilon}} | \overline{X}_k - \overline{Z}_{k/d}|_\infty < \tfrac{L}{8}, \\ &\qquad \qquad\qquad
    |\overline{Z}_{H^{\overline{Z}}_{\widetilde{\Gamma}}}-\overline{Z}_{\widehat{H}^{\overline{Z}}_{\widetilde{\Gamma}}}|_{\infty}\leq \tfrac{L}{8}\Big] + o(1)\\
      & \leq  \overline{P}[H_{C}^{\overline{X}} < \infty] + o(1), \quad \text{as $N\to \infty$,}
  \end{aligned}
\end{equation}
uniformly in $\cC\subseteq B(0,MN)$ and in $x \in B(0,RN)$. The result now follows by plugging~\eqref{eq:sc5} in~\eqref{eq:sc1} and by sending first $N\to \infty$ and then $R\to \infty$.
\end{proof}
\textbf{Acknowledgements.}
The authors wish to thank Alain-Sol Sznitman for useful discussions and valuable comments at various stages of this project.

\end{document}